\providecommand{\U}[1]{\protect\rule{.1in}{.1in}}
\newcommand{\ulambda}{{\boldsymbol{\lambda}}}
\newtheorem{theorem}{Theorem}[section]
\newtheorem{corollary}[theorem]{Corollary}
\newtheorem{proposition}[theorem]{Proposition}
\theoremstyle{definition}
\newtheorem{example}[theorem]{Example}
\newtheorem{definition}[theorem]{Definition}
\theoremstyle{remark}
\newtheorem{remark}[theorem]{Remark}
\begin{document}
\title[Factorization of the canonical bases for higher level Fock spaces]{Factorization of the canonical bases for higher level Fock spaces}
\date{}
\author{Susumu Ariki, Nicolas Jacon and C\'{e}dric Lecouvey}
\maketitle
\begin{abstract}The level $l$ Fock space admits canonical bases $\mathcal{G}_{e}$ and
$\mathcal{G}_{\infty}$. They correspond to ${\mathcal{U}_{v}(\widehat
{\mathfrak
{sl}_{e}})}$ and $\mathcal{U}_{v}({\mathfrak{sl}}_{\infty})$-module
structures. We establish that the transition matrices relating these two bases
are unitriangular with coefficients in $\mathbb{N}[v]$. Restriction to the
highest weight modules generated by the empty $l$-partition then gives a
natural quantization of a theorem by Geck and Rouquier on the factorization of
decomposition matrices which are associated to Ariki-Koike algebras.
\end{abstract}

\section{Introduction}

In the classification of finite complex reflection groups by Shephard and Todd
\cite{ST}, there is a single infinite family of groups $G(lp,p,n)$
parametrized by the triples $(l,p,n)\in\mathbb{N}^{3}$ and $34$ other
``exceptional'' groups. If $p=1$, the group $G(l,1,n)$ is the wreath
product of the cyclic group of order $l$ with the symmetric group $S_{n}$. It
generalizes both the Weyl group of type $A_{n-1}$ (corresponding to the case
$l=1$) and the Weyl group of type $B_{n}$ ($l=2$). To $G(l,1,n)$ we may associate 
its Hecke algebra over the ring $A:=\mathbb{C}[q^{\pm1},Q_{1}^{\pm1}%
,\ldots,Q_{l}^{\pm1}]$, where $(q,Q_{1},\ldots,Q_{l})$ is an $l+1$-tuple of
indeterminates. This algebra can be seen as a deformation of the group algebra
of $G(l,1,n)$ and has applications to the modular representation theory of
finite reductive groups (see for example the survey \cite{Mathas}). As an
$A$-algebra, it has the set of generators $\{T_{0},\ldots,T_{n-1}\}$ such that the 
defining relations are  
\[
\prod_{i=1}^{l}(T_{0}-Q_{i})=0,\ (T_{i}-q)(T_{i}+1)=0,\ i=1,\ldots,n-1
\]
and the braid relations of type $B_{n}.$ We denote this algebra by $\mathcal{H}_{A}$. 
If we extend the scalars of $\mathcal{H}_{A}$ to $K=\mathbb{C}(q,Q_{1},\ldots,Q_{l})$, 
the field of fractions of $A$, we obtain the algebra $\mathcal{H}_{K}:=K\otimes_{A}\mathcal{H}_{A}$ whose representation theory is well understood. 
For example, we know how to classify the irreducible
representations, what are their dimensions etc (see \cite{AK}, \cite{HR}). 
The theory is far more difficult in the modular case.  
Let $\theta:A\rightarrow\mathbb{C}$ be a ring
homomorphism and let $\mathcal{H}_{\mathbb{C}}:=\mathbb{C}\otimes
_{A}\mathcal{H}_{A}$ be the associated Hecke algebra. Due to results of Dipper
and Mathas \cite{DM}, one can reduce various important problems to 
the case when $\theta(q)=\eta_{e}:=\text{exp}(\frac{2i\pi}{e})$ is a 
$e$-th root of unity, for $e\in\mathbb{Z}_{\geq2}$, and 
$\theta(Q_{j})=\eta_{e}^{s_{j}}$, for $j=1,\ldots, l$, where 
$(s_{1},\ldots,s_{l})\in\mathbb{Z}^{l}$. An important object of study in 
the modular case is the decomposition map. As $\mathcal{H}_{A}$ is a
cellular algebra \cite{GL}, the decomposition map may be defined as follows. Let
$V_{K}\in\text{Irr}(\mathcal{H}_{K})$. Then there exists a specific $\mathcal{H}_{A}
$-module $V_{A}$, which is called a \emph{cell module}, such that 
$V_{K}=K\otimes_{A}V_{A}$. We can then associate to
$V_{K}$ the $\mathcal{H}_{\mathbb{C}}$-module $V_{\mathbb{C}}=\mathbb{C}\otimes_{A}V_{A}$. 
This gives a well-defined map between Grothendieck groups $R_{0}(\mathcal{H}_{K})$ of 
finitely generated $\mathcal{H}_{K}$-modules and $R_{0}(\mathcal{H}_{\mathbb{C}})$ of 
finitely generated $\mathcal{H}_{\mathbb{C}}$-modules. We denote the decomposition map by 
\[
d_{\theta}: R_{0}(\mathcal{H}_{K}) \rightarrow  R_{0}(\mathcal{H}_{\mathbb{C}}).
\]
We denote the associated decomposition matrix by $D_{e}$. 
It is known that we may choose $V_{A}$ 
more general than the cell module and the decomposition map is still well defined \cite{Geckdecompo}. 

There exist algorithms to compute the map $d_{\theta},$ but it remains
difficult to describe it in general. One useful tool here is 
a result by Geck and Rouquier \cite{GRfac}, which 
gives information on the matrix $D_{e}$ by factorizing the decomposition map. 
Let $\theta^{q}:A\rightarrow\mathbb{C}(q)$ be the specialization map 
defined by $\theta^{q}(Q_{i})=q^{s_{i}}$, for $i=1,\ldots,l$. Denote
by $\mathcal{H}_{\mathbb{C}(q)}:=\mathbb{C}(q)\otimes_{A}\mathcal{H}_{A}$ the
associated Hecke algebra. As above, we have the decomposition map
\[
d_{\theta^{q}}:R_{0}(\mathcal{H}_{K})\rightarrow R_{0}(\mathcal{H}
_{\mathbb{C}(q)})
\]
and the associated decomposition matrix $D_{\infty}$. Then 
\cite[Prop. 2.12]{GRfac} implies the following. 

\begin{theorem}
[Geck-Rouquier]\label{factoh} There exists a unique ${\mathbb Z}$-linear map
\[
d_{\theta^{q}}^{\theta}\colon R_{0}(\mathcal{H}_{\mathbb{C}(q)})\rightarrow
R_{0}(\mathcal{H}_{\mathbb{C}})
\]
such that the following diagram commutes:

\[
\begin{picture}(190,59) \put(20,45){$R_0(\mathcal{H}_{K})$} \put
(65,48){\vector(1,0){80}} \put(100,54){$d_{\theta}$} \put
(155,45){$R_0(\mathcal{H}_{\mathbb{C}})$} \put(48,37){\vector(3,-2){35}}
\put(50,17){$d_{\theta^q}$} \put(128,13){\vector(3,2){35}} \put
(152,17){$d_{\theta^q}^{\theta}$} \put(83,05){$R_0(\mathcal{H}_{\mathbb{C}
(q)})$} \end{picture}
\]
\end{theorem}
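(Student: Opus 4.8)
The plan is to recognize $d^{\theta}_{\theta^q}$ as the decomposition map attached to a discrete valuation ring interpolating between $\mathcal{H}_{\mathbb{C}(q)}$ and $\mathcal{H}_{\mathbb{C}}$, and to extract existence and uniqueness from the cellular structure of $\mathcal{H}_A$. Recall from \cite{GL}, \cite{AK} that $\mathcal{H}_A$ is cellular and free of finite rank over $A$, with cell modules $W_A^{\ulambda}$ indexed by the $l$-partitions $\ulambda$ of $n$, and that $\mathcal{H}_K$ is split semisimple with $\{K\otimes_A W_A^{\ulambda}\}_{\ulambda}$ a full set of simple modules. Hence $\{[K\otimes_A W_A^{\ulambda}]\}_{\ulambda}$ is a $\mathbb{Z}$-basis of $R_0(\mathcal{H}_K)$, and by definition of the decomposition maps through cell modules one has $d_{\theta}([K\otimes_A W_A^{\ulambda}])=[\mathbb{C}\otimes_A W_A^{\ulambda}]$ and $d_{\theta^q}([K\otimes_A W_A^{\ulambda}])=[\mathbb{C}(q)\otimes_A W_A^{\ulambda}]$; thus both maps, and any candidate factorization, are pinned down by their values on this basis.

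For uniqueness I would observe that $d_{\theta^q}$ is surjective. Indeed $\mathcal{H}_{\mathbb{C}(q)}=\mathbb{C}(q)\otimes_A\mathcal{H}_A$ is again cellular, so the classes of its cell modules $[\mathbb{C}(q)\otimes_A W_A^{\ulambda}]$ are obtained from the $\mathbb{Z}$-basis of simple classes by a unitriangular matrix (over the cellular index set with its partial order), and therefore already span $R_0(\mathcal{H}_{\mathbb{C}(q)})$ over $\mathbb{Z}$; since the image of $d_{\theta^q}$ is exactly the span of these classes, it is everything. Two $\mathbb{Z}$-linear maps that agree after precomposition with a surjection coincide, so at most one $d^{\theta}_{\theta^q}$ can make the triangle commute.

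For existence, let $\mathcal{O}$ be the localization of $\mathbb{C}[q]$ at the maximal ideal $(q-\eta_e)$, a discrete valuation ring with fraction field $\mathbb{C}(q)$ and residue field $\mathbb{C}$ (via $q\mapsto\eta_e$). As $\eta_e\neq 0$, $q$ is a unit of $\mathcal{O}$, so $\theta^q$ factors through $A\to\mathcal{O}\hookrightarrow\mathbb{C}(q)$, and post-composing $A\to\mathcal{O}$ with the residue map recovers $\theta$. Put $\mathcal{H}_{\mathcal{O}}=\mathcal{O}\otimes_A\mathcal{H}_A$: it is a cellular $\mathcal{O}$-algebra, free of finite rank over $\mathcal{O}$, with generic fibre $\mathcal{H}_{\mathbb{C}(q)}$ and special fibre $\mathcal{H}_{\mathbb{C}}$. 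Every finitely generated $\mathcal{H}_{\mathbb{C}(q)}$-module $V$ is finite dimensional over $\mathbb{C}(q)$ and hence contains a full $\mathcal{H}_{\mathcal{O}}$-lattice $\widetilde V$ (free of finite rank over the Noetherian ring $\mathcal{O}$), and one sets $d^{\theta}_{\theta^q}([V]):=[\mathbb{C}\otimes_{\mathcal{O}}\widetilde V]$.

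It then remains to verify that this is well defined and that the diagram commutes. Well-definedness — independence of the chosen lattice — is the Brauer--Nesbitt theorem over the discrete valuation ring $\mathcal{O}$, and this is the one genuinely delicate ingredient; the rest is base change and cell-module bookkeeping. For commutativity it suffices to evaluate on the basis: since $W_{\mathcal{O}}^{\ulambda}=\mathcal{O}\otimes_A W_A^{\ulambda}$ is a full $\mathcal{H}_{\mathcal{O}}$-lattice in $\mathbb{C}(q)\otimes_A W_A^{\ulambda}$ with reduction $\mathbb{C}\otimes_A W_A^{\ulambda}$, one gets
\[
d^{\theta}_{\theta^q}\bigl(d_{\theta^q}([K\otimes_A W_A^{\ulambda}])\bigr)=d^{\theta}_{\theta^q}\bigl([\mathbb{C}(q)\otimes_A W_A^{\ulambda}]\bigr)=[\mathbb{C}\otimes_A W_A^{\ulambda}]=d_{\theta}([K\otimes_A W_A^{\ulambda}]),
\]
so $d_{\theta}=d^{\theta}_{\theta^q}\circ d_{\theta^q}$, and the factorizing map is unique by the second paragraph. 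Alternatively one may simply invoke \cite[Prop.~2.12]{GRfac}, the argument above being its specialization to the present situation.
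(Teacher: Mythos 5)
Your argument is correct, and there is nothing in the paper to compare it against in detail: the authors do not prove this theorem but simply invoke \cite[Prop.~2.12]{GRfac}, and your proof is exactly the standard argument that underlies that citation, specialized to the present algebras. The key points are all present and in the right place: localizing $\mathbb{C}[q]$ at $q-\eta_e$ produces a discrete valuation ring $\mathcal{O}$ through which both $\theta^q$ and $\theta$ factor (using that $q$ is a unit in $\mathcal{O}$ so that $q^{\pm s_i}\in\mathcal{O}$); existence of the factorizing map is the Brauer--Nesbitt lattice-independence theorem over $\mathcal{O}$; uniqueness follows from surjectivity of $d_{\theta^q}$, which you correctly extract from unitriangularity of the decomposition matrix of the cellular algebra $\mathcal{H}_{\mathbb{C}(q)}$; and commutativity of the triangle is checked on the basis of cell modules via $\mathcal{O}\otimes_A W_A^{\boldsymbol{\lambda}}$. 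Your closing sentence, offering the direct appeal to \cite[Prop.~2.12]{GRfac}, is literally what the paper does.
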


Thus, we have the factorization $D_{e}=D_{\infty}.D_{\infty}^{e}$ of 
the decomposition matrices, where $D_{\infty}^{e}$ is the 
decomposition matrix for $d_{\theta^{q}}^{\theta}$. We shall call $D_{\infty}^{e}$ 
the \emph{relative decomposition matrix}. 
This result shows that a part of the representation theory of
$\mathcal{H}_{\mathbb{C}}$ does not depend on $e$ but only on the
representation theory of $\mathcal{H}_{\mathbb{C}(q)}$, which is ``easier'' to
understand (for example, there are closed formulae for the entries of
$D_{\infty}$ when $l=2$ \cite{LM}). An example of its application 
is that one may give explicit relationship among various classifications of simple
modules arising from the theory of canonical basic sets in type $B_{n}$
\cite{Jcons}. 

In view of the Fock space theory, which is now standard in the study of Hecke algebras, 
Theorem \ref{factoh} naturally leads to several questions. 
As noted above, there is an algorithm for computing the decomposition matrices
of $\mathcal{H}_{\mathbb{C}}$ and $\mathcal{H}_{\mathbb{C}(q)}$. This algorithm 
relies on the first author's proof (see \cite{arikilivre}) of the Lascoux-Leclerc-Thibon conjecture \cite{LLT}. 
His theorem asserts that
$D_{e}$ (resp. $D_{\infty}$) is equal to the evaluation at $v=1$ of the matrix
$D_{e}(v)$ (resp. $D_{\infty}(v)$) which is obtained by expanding the canonical basis in a
highest weight ${\mathcal{U}_{v}(\widehat{\mathfrak{sl}_{e}})}$-module (resp.
$\mathcal{U}_{v}(\mathfrak{sl}_{\infty})$-module) into linear combination of 
the standard basis of a Fock space. Thus, Theorem \ref{factoh} 
implies the existence of a matrix $D_{\infty}^{e}$ such that $D_{e}
(1)=D_{\infty}(1).D_{\infty}^{e}$. The entries of $D_{e}(v)$ and $D_{\infty}(v)$ 
are known to be in $\mathbb{N}[v]$, i.e. 
polynomials with nonnegative integer coefficients. Hence it is 
natural to ask:
\begin{itemize}
\item[(Q1)]
Does the matrix $D_{\infty}^{e}$ have a natural quantization ? 
Namely, is there a matrix $D_{\infty}^{e}(v)$ with entries in 
$\mathbb{N}[v] $ such that
$$D_{e}(v)=D_{\infty}(v).D_{\infty}^{e}(v)\;?$$
\item[(Q2)]
If $D_{\infty}^{e}(v)$ is known to exist, 
find a practical algorithm to compute $D_{\infty}^{e}(v)$.
\end{itemize}
In other words, we ask if the matrix
of the canonical basis for ${\mathcal{U}_{v}(\widehat{\mathfrak{sl}_{e}})}
$-modules factorizes through the matrix of the canonical basis for
$\mathcal{U}_{v}(\mathfrak{sl}_{\infty})$-modules. 

Highest weight ${\mathcal{U}_{v}(\widehat{\mathfrak{sl}_{e}})}$-modules and
$\mathcal{U}_{v}(\mathfrak{sl}_{\infty})$-modules are realized as
irreducible components of Fock spaces of higher level. By Uglov's results
\cite{U}, these Fock spaces also admit canonical bases. So the above questions 
also make sense for the matrices $\Delta_{e}(v)$ and 
$\Delta_{\infty}(v)$ which are associated to the canonical bases of the whole Fock
space. Thus, instead of (Q1), we ask whether 
there exists a matrix $\Delta_{\infty}^{e}(v)$ with entries in $\mathbb{N}[v]$ 
such that 
$$\Delta_{e}(v)=\Delta_{\infty}(v).\Delta_{\infty}^{e}(v).$$

The matrix $\Delta_{\infty}^{e}(v)$ is expected to have several interpretations. 
Observe that recent conjectures and results \cite{BJ}, \cite{BK},
\cite{BK2} show that $D_{e}(v)$ and $D_{\infty}(v)$ should be interpreted as  graded decomposition matrices 
of Hecke algebras. 
 $D_{\infty}^{e}(v)$ might also be interpreted as a graded analogue of $D_{\infty}^{e}$ 
in this setting. 
According to conjectures of Yvonne \cite{Y} and Rouquier \cite[\S 6.4]{Rou}, 
$\Delta_{e}(1)$ and $\Delta_{\infty}(1)$ are expected to be decomposition 
matrices of a generalized $\eta_e$ and $q$-Schur algebras, respectively. 
Thus, $\Delta_{\infty}^{e}(v)$ 
might have a similar meaning  as $D_{\infty}^{e}(v)$ as well. 

In another direction, we interpret the factorization 
$D_{e}=D_{\infty}.D_{\infty}^{e}$ in the context of 
parabolic BGG categories in the last section. 
This second interpretation should also have graded version, which is 
independent of the first (note that Hecke algebras are not 
positively graded.)

In this paper, we answer positively to the questions (Q1) and (Q2) for $\Delta_{\infty}^{e}(v)$. 
We first show the existence of the matrices $D_{\infty}^{e}(v)$ and 
$\Delta_{\infty}^{e}(v)$ with entries in $\mathbb{Z}[v]$. 
In fact $D_{\infty}^{e}(v)$ is a submatrix of $\Delta_{\infty}^{e}(v)$ 
and we provide an efficient algorithm for computing it 
(and thus an algorithm for computing $D_{\infty}^{e}$). Then, 
we prove that the entries of $\Delta_{\infty}^{e}(v)$ are in
$\mathbb{N}[v]$. More precisely, we show that they can be expressed as sum of
products of structure constants of the affine Hecke algebras of 
type $A$ with respect to the Kazhdan-Lusztig basis and its generalization by
Grojnowski-Haiman \cite{GH}.

Let us briefly summarize the main ingredients of our proofs. The Fock space
theory developed in \cite{jim} and the notion of canonical bases for these
Fock spaces introduced in \cite{U} make apparent strong connections between the
representation theories of ${\mathcal{U}_{v}(\widehat{\mathfrak{sl}_{e}})}$
and $\mathcal{U}_{v}(\mathfrak{sl}_{\infty})$. They permit us to prove the
existence of a matrix $\Delta_{\infty}^{e}(v)$ with entries in $\mathbb{Z}[v]$
such that $\Delta_{e}(v)=\Delta_{\infty}(v).\Delta_{\infty}^{e}(v).$ This
factorization can be regarded as an analogue, at the level of canonical bases,
of the compatibility of the crystal graph structures established in
\cite{JL}. It is achieved by introducing a new partial 
order on the set of $l$-partitions, which does not depend on $e$. This order  
differs from that used in \cite{U} and 
has the property that $\Delta_{e}(v)$ and $\Delta_{\infty}(v)$ 
are simultaneously unitriangular. 
The compatibility between the ${\mathcal{U}_{v}
(\widehat{\mathfrak{sl}_{e}})}$ and $\mathcal{U}_{v}(\mathfrak{sl}_{\infty}
)$-module structures on the Fock space then implies the factorization
$\Delta_{e}(v)=\Delta_{\infty}(v).\Delta_{\infty}^{e}(v)$. 
To show the positivity, recall that 
the coefficients of the matrices $\Delta_{\infty}(v)$ and $\Delta_{e}(v)$
are expressed by parabolic Kazhdan-Lusztig polynomials of the affine Hecke
algebras of type $A$ \cite{U}. We see in a simpler manner than \cite{U} how the 
parabolic Kazhdan-Lusztig polynomials are related to the entries of 
$\Delta_{\infty}(v)$ and $\Delta_{e}(v)$, for a fixed pair of $l$-partitions. 
The positivity result then follows from this and the positivity of the structure 
constants of the affine Hecke algebra.

\section{Background on Fock spaces and canonical bases}

We refer to \cite{kashi} and to \cite{arikilivre} for a detailed review on the 
canonical and crystal basis theory. \cite[\S7]{gecklivre} also gives a nice
survey on modular representation theory of Hecke algebras. Let $v$ be an
indeterminate, $e>1$ an integer, and ${\mathcal{U}_{v}(\widehat
{\mathfrak{sl}_{e}})}$ the quantum group of type $A_{e-1}^{(1)}$.
It is an associative $\mathbb{Q}(v)$-algebra with Chevalley generators $e_{i}
,f_{i},t_{i},t_{i}^{-1},$ for $i\in\mathbb{Z}/e\mathbb{Z},$ and $\partial$. We
refer to \cite[\S2.1]{U} for the precise definition. The bar-involution
$\overline{
\begin{tabular}
[c]{l}
\ \
\end{tabular}
\ }$ is the ring automorphism of ${\mathcal{U}_{v}(\widehat{\mathfrak{sl}_{e}
})}$ such that $\overline{v}=v^{-1},\overline{\partial}=\partial$ and, 
\[
\overline{e_{i}}=e_{i},\;\; \overline{f_{i}}=f_{i}\text{ and }
\overline{t_{i}}=t_{i}^{-1}, \;\;\text{for $i\in\mathbb{Z}/e\mathbb{Z}$}.
\]
We denote by ${\mathcal{U}_{v}^{\prime}(\widehat{\mathfrak{sl}_{e}})}$ the
subalgebra generated by $\{e_{i},f_{i},t_{i},t_{i}^{-1} \mid i\in\mathbb{Z}
/e\mathbb{Z}\}$. By slight abuse of notation, we identify the elements of
$\mathbb{Z}/e\mathbb{Z}$ with their corresponding labels in $\{0,\ldots,e-1\}
$ when there is no risk of confusion. Write $\{\Lambda_{0},\ldots
,\Lambda_{e-1}\}$ for the set of fundamental weights of ${\mathcal{U}
_{v}(\widehat{\mathfrak{sl}_{e}})}$, and $\delta$ for the null root. 
Let $l\in\mathbb{Z}_{\geq1}$ and consider
${\mathbf{s}}=(s_{1},\ldots,s_{l})\in\mathbb{Z}^{l}$, which we call a 
\emph{multicharge}. We set 
$$\mathfrak{s}=(s_{1}(\text{mod }e),\ldots,s_{l}(\text{mod }e))\in(\mathbb{Z}
/e\mathbb{Z})^{l}$$ and $\Lambda_{\mathfrak{s}}:=\Lambda_{s_{1}(\text{mod }
e)}+\ldots+\Lambda_{s_{l}(\text{mod }e)}$.

Similarly, let ${\mathcal{U}_{v}(\mathfrak{sl}_{\infty})}$ be the quantum
group of type $A_{\infty}.$ It is an associative $\mathbb{Q}(v)$-algebra
with Chevalley generators $E_{j},F_{j},T_{j},T_{j}^{-1},$ for $j\in\mathbb{Z}$. We 
use the same symbol $\overline{
\begin{tabular}
[c]{l}
\ \
\end{tabular}
\ }$ to denote its bar-involution, which is the ring automorphism of ${\mathcal{U}_{v}(\mathfrak{sl}_{\infty})}$ such
that $\overline{v}=v^{-1}$ and,
\[
\overline{E_{j}}=E_{j},\;\;\overline{F_{j}}=F_{j}\text{ and }\overline{T_{j}
}=T_{j}^{-1},\;\;\text{for $j\in\mathbb{Z}$}.
\]
Write $\{\omega_{j},j\in\mathbb{Z\}}$ for its set of fundamental weights. To
${\mathbf{s}}=(s_{1},\ldots,s_{l})\in\mathbb{Z}^{l}$, we associate the
dominant weight $\Lambda_{{\mathbf{s}}}:=\omega_{s_{1}}+\cdots+\omega
_{s_{l}}$.

\subsection{Fock spaces}

Let $\Pi_{l,n}$ be the set of $l$-partitions with rank $n,$ that is, the set of
sequences ${\boldsymbol{\lambda}=}(\lambda^{(1)},\ldots,\lambda^{(l)})$ of
partitions such that $\left|  {\boldsymbol{\lambda}}\right|  =\left|
\lambda^{(1)}\right|  +\cdots+\left|  \lambda^{(l)}\right|  =n$. Set
$\Pi_{l}=\cup_{n\geq0}\Pi_{l,n}$. We also write $\Pi=\cup_{n\geq0}\Pi_{1,n}$ 
for short.  
The \emph{Fock space} $\mathcal{F}$ of level $l$ 
is a $\mathbb{Q}(v)$-vector space which has the set of all $l$-partitions as 
the given basis, so that we write
\[
\mathcal{F}=\bigoplus_{{\boldsymbol{\lambda}}\in\Pi_{l}
}\mathbb{Q}(v){\boldsymbol{\lambda}}.
\]
The Fock space $\mathcal{F}$ may be endowed with a structure of
${\mathcal{U}_{v}(\widehat{\mathfrak{sl}_{e}})}$ and ${\mathcal{U}
_{v}(\mathfrak{sl}_{\infty})}$-modules. Let ${\boldsymbol{\lambda}}$ be an
$l$-partition (identified with its Young diagram). Then, the \emph{nodes} of
${\boldsymbol{\lambda}}$ are the triples $\gamma=(a,b,c)$ where
$c\in\{1,\ldots,l\}$ and $a,b$ are the row and column indices
of the node $\gamma$ in $\lambda^{(c)}$, respectively. 
The \emph{content} of $\gamma$ is the
integer $c\left(  \gamma\right)  =b-a+s_{c}$ and the \emph{residue} $\mathrm{res(}
\gamma)$ of $\gamma$ is the element of $\mathbb{Z}/e\mathbb{Z}$ such that
\begin{equation}
\mathrm{res}(\gamma)\equiv c(\gamma)(\text{mod }e). \label{res}
\end{equation}

For $i\in\mathbb{Z}/e\mathbb{Z}$, we say that $\gamma$ is an $i$-node
of ${\boldsymbol{\lambda}}$ when $\mathrm{res}(\gamma)\equiv i(\text{mod }e).$ 
Similarly for $j\in\mathbb{Z}$, we say that 
$\gamma$ is a $j$-node of ${\boldsymbol{\lambda}}$
when $c(\gamma)=j.$ We say that a node $\gamma$ is \emph{removable} when $\gamma
=(a,b,c)\in{\boldsymbol{\lambda}}$ and ${\boldsymbol{\lambda}}\backslash
\{\gamma\}$ is an $l$-partition, and \emph{addable} when
$\gamma=(a,b,c)\notin{\boldsymbol{\lambda}}$ and ${\boldsymbol{\lambda}}
\cup\{\gamma\}$ is an $l$-partition.

Let $i\in\mathbb{Z}/e\mathbb{Z}$. In the sequel, we follow the convention of
\cite{U}. We define a total order 
on the set of $i$-nodes of ${\boldsymbol{\lambda}}$. Consider two
nodes $\gamma_{1}=(a_{1},b_{1},c_{1})$ and $\gamma_{2}=(a_{2},b_{2},c_{2})$ in ${\boldsymbol{\lambda}}$. We define the order by
\[
\gamma_{1}\prec_{{\mathbf{s}}}\gamma_{2}\Longleftrightarrow\left\{
\begin{array}
[c]{l}
c(\gamma_{1})<c(\gamma_{2})\text{ or}\\
c(\gamma_{1})=c(\gamma_{2})\text{ and }c_{1}<c_{2}.
\end{array}
\right.
\]
Let ${\boldsymbol{\lambda}}$ and ${\boldsymbol{\mu}}$ be two $l$-partitions of
rank $n$ and $n+1$ such that $[{\boldsymbol{\mu}}]=[{\boldsymbol{\lambda}
}]\cup{\{\gamma\}}$ where $\gamma$ is an $i$-node. Define
\begin{align}
{N}_{i}^{\succ}{({\boldsymbol{\lambda}},{\boldsymbol{\mu}})}=  &
\sharp\{\text{addable }\ i\text{-nodes }\gamma^{\prime}\text{ of }{\boldsymbol{\lambda
}}\ \text{ such that }\gamma^{\prime}\succ_{{\mathbf{s}}}\gamma\}\label{Ni}\\
&  -\sharp\{\text{removable }\ i\text{-nodes }\gamma^{\prime}\text{ of
}{\boldsymbol{\mu}}\ \text{ such that }\gamma^{\prime}\succ_{{\mathbf{s}}
}\gamma\},\nonumber\\
{N}_{i}^{\prec}{({\boldsymbol{\lambda}},{\boldsymbol{\mu}})}=  &
\sharp\{\text{addable }i\text{-nodes }\gamma^{\prime}\text{ of }
{\boldsymbol{\lambda}}\ \text{ such that }\gamma^{\prime}\prec_{{\mathbf{s}}
}\gamma\}\nonumber\\
&  -\sharp\{\text{removable }i\text{-nodes }\gamma^{\prime}\text{ of
}{\boldsymbol{\mu}}\ \text{ such that }\gamma^{\prime}\prec_{{\mathbf{s}}
}\gamma\},\\
{N}_{i}{({\boldsymbol{\lambda}})}=  &  \sharp\{\text{addable }i\text{-nodes of
}{\boldsymbol{\lambda}}\}\nonumber\\
&  -\sharp\{\text{removable }i\text{-nodes of }{\boldsymbol{\lambda}}\}\text{
}\nonumber\\
\text{ and }{M}_{0}{({\boldsymbol{\lambda}})}=  &  \sharp\{0\text{-nodes of
}{\boldsymbol{\lambda}}\}.\nonumber
\end{align}

\begin{theorem}
\cite{jim}\label{jmm} Let $\mathbf{s}\in\mathbb{Z}^{l}$. The Fock space
$\mathcal{F}$ has a structure of an integrable ${\mathcal{U}_{v}
(\widehat{\mathfrak{sl}_{e}})}$-module $\mathcal{F}_{e}^{\mathbf{s}}$ defined
by
\begin{gather*}
e_{i}{\boldsymbol{\lambda}}=\sum_{\text{res}([{\boldsymbol{\lambda}}]/[{\boldsymbol{\mu}}])=i}{v^{-{N}_{i}^{\prec}{({\boldsymbol{\mu}
},{\boldsymbol{\lambda}})}}{\boldsymbol{\mu}},\quad{f_{i}{\boldsymbol
{\lambda}}=\sum_{\text{res}([{\boldsymbol{\mu}}]/[{\boldsymbol{\lambda}}
])=i}{v^{{N}_{i}^{\succ}{({\boldsymbol{\lambda}},{\boldsymbol{\mu}})}}}}
}{\boldsymbol{\mu}},\\[5pt]
\qquad t_{i}{\boldsymbol{\lambda}}=v^{{N}_{i}{({\boldsymbol{\lambda}})}}
{\boldsymbol{\lambda}}\quad\text{and}\quad\partial{\boldsymbol{\lambda}}=-(\Delta
+M_{0}({\boldsymbol{\lambda}})){\boldsymbol{\lambda}},
\end{gather*}
for $i\in\mathbb{Z}/e\mathbb{Z}$, 
where $\Delta$ is the rational number defined in \cite[Thm 2.1]{jim}. The module
structure on $\mathcal{F}_{e}^{\mathbf{s}}$ depends on $\mathbf{s}$ and $e.$
\end{theorem}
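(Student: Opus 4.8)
The task is to check, on the standard basis $\{\boldsymbol{\lambda}\}$, that the operators $e_i,f_i,t_i^{\pm1}$ (for $i\in\mathbb{Z}/e\mathbb{Z}$) and $\partial$ satisfy the defining relations of $\mathcal{U}_v(\widehat{\mathfrak{sl}_e})$ --- namely $t_it_i^{-1}=1$, $t_it_j=t_jt_i$, $[\partial,t_i]=0$, $t_ie_jt_i^{-1}=v^{a_{ij}}e_j$, $t_if_jt_i^{-1}=v^{-a_{ij}}f_j$, $[\partial,e_i]=\delta_{i,0}e_i$, $[\partial,f_i]=-\delta_{i,0}f_i$, $[e_i,f_j]=\delta_{ij}(t_i-t_i^{-1})/(v-v^{-1})$, and the quantum Serre relations --- and that the resulting module is integrable. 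Each relation reduces to a combinatorial identity for the counting functions $N_i^{\succ},N_i^{\prec},N_i,M_0$; these functions and the order $\prec_{\mathbf{s}}$ are designed precisely so that these identities hold, the conceptual reason being that $\mathcal{F}_e^{\mathbf{s}}$, with the order $\prec_{\mathbf{s}}$, is built from the level-one Fock spaces $\mathcal{F}_e^{s_1},\dots,\mathcal{F}_e^{s_l}$ through the comultiplication of $\mathcal{U}_v(\widehat{\mathfrak{sl}_e})$ --- the order $\prec_{\mathbf{s}}$ recording how the grouplike twists are distributed over the components --- equivalently, that it is the structure carried by Uglov's semi-infinite wedge realization. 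The plan is to organize the direct verification around this fact.

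One would first treat one colour $i$ at a time. The $t$-relations follow from the local statement that removing a $j$-node shifts $N_i$ by $a_{ij}$, which is checked by inspecting the residues of the cells bordering the affected corner; combined with $t_i\boldsymbol{\lambda}=v^{N_i(\boldsymbol{\lambda})}\boldsymbol{\lambda}$ this yields the $t$-$t$ and $t$-$\partial$ relations as well. For $[e_i,f_i]$, I would expand $e_if_i\boldsymbol{\lambda}$ and $f_ie_i\boldsymbol{\lambda}$ over the two orders of adding then removing, respectively removing then adding, an $i$-node: the terms landing on some $\boldsymbol{\mu}\neq\boldsymbol{\lambda}$ cancel in pairs --- the ``ladder lemma'', which on the abacus amounts to pairing off beads on runners $i-1$ and $i$ --- while the terms landing back on $\boldsymbol{\lambda}$ sum, over the addable and removable $i$-nodes and after collecting the resulting arithmetic progression of $v$-exponents, to $(v^{N_i(\boldsymbol{\lambda})}-v^{-N_i(\boldsymbol{\lambda})})/(v-v^{-1})\,\boldsymbol{\lambda}$. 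This is precisely the statement that the rank-one subalgebra $\langle e_i,f_i,t_i^{\pm1}\rangle\cong\mathcal{U}_v(\mathfrak{sl}_2)$ acts on the span of the $l$-partitions differing from a given $\boldsymbol{\lambda}$ by $i$-nodes as a direct sum of finite-dimensional modules with the standard formulas; in particular $e_i$ and $f_i$ are locally nilpotent --- adding an $i$-node decreases the number of addable $i$-nodes by exactly one --- so the module is integrable. Finally $\partial$ acts by the scalar $-(\Delta+M_0(\boldsymbol{\lambda}))$ and $M_0$ changes by $\delta_{i,0}$ upon adding an $i$-node, giving $[\partial,e_i]=\delta_{i,0}e_i$ and $[\partial,f_i]=-\delta_{i,0}f_i$; tracking the order $\prec_{\mathbf{s}}$ across the $l$ components throughout the above is routine.

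It then remains to treat pairs of colours $i\neq j$. If $a_{ij}=0$, adding or removing an $i$-node and a $j$-node are completely independent operations --- neither the relevant sets of addable and removable nodes nor the exponents $N^{\succ},N^{\prec}$ are affected by the other --- so $e_ie_j=e_je_i$, $f_if_j=f_jf_i$ and $e_if_j=f_je_i$ hold identically. If $a_{ij}\neq0$, one must verify the degree-$(1-a_{ij})$ quantum Serre relation; the only interaction between an $i$-node and a $j$-node occurs at adjacent corners, so the identity reduces to a bounded list of local configurations of the boundary of $\boldsymbol{\lambda}$, and in each one the required $v$-binomial identity among the $N^{\succ}/N^{\prec}$-exponents is checked by inspection. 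I expect this last step --- tracking the $v$-powers through the Serre relation --- to be the main obstacle, and in particular the case $e=2$, where $a_{01}=-2$ so the Serre relation has degree $3$ with non-trivial $v$-binomial coefficients, several intermediate diagrams must be compared and the exponents must telescope exactly; everything else is the rank-one ladder computation together with the routine bookkeeping of $\prec_{\mathbf{s}}$ described above.
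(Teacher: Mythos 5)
The paper itself offers no proof of this statement: Theorem~\ref{jmm} is quoted verbatim from Jimbo, Misra, Miwa and Okado \cite{jim}, so there is nothing in the text to compare your attempt against, and the evaluation has to be on the merits of the sketch alone and against the standard arguments in the cited literature.

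Your sketch is a sensible plan of a direct verification, and the key facts you invoke are correct: the $t$- and $\partial$-relations do reduce to the local node-residue bookkeeping you describe, the $[e_i,f_i]$ relation is indeed a ladder/telescoping computation landing on $(v^{N_i(\boldsymbol\lambda)}-v^{-N_i(\boldsymbol\lambda)})/(v-v^{-1})$, and local nilpotency does follow because adding an $i$-node removes one addable $i$-node and (for $e\geq2$) creates none. But you have actually described \emph{two} routes and taken the worse one. You correctly identify, as ``the conceptual reason,'' that with the order $\prec_{\mathbf s}$ the level-$l$ space is exactly the $l$-fold tensor product of level-one Fock spaces under the coproduct of $\mathcal{U}_v(\widehat{\mathfrak{sl}_e})$ --- the exponents $N_i^{\succ},N_i^{\prec}$ with $\prec_{\mathbf s}$ are precisely the $v$-powers coming from the grouplike factors $t_i^{\pm1}$ spread across the $l$ components. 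If you make that statement precise and prove it (which is a definition-unwinding exercise, not a relation check), then \emph{all} of the algebra relations, including the Serre relations and integrability, are inherited from the level-one case, where they are classical (Hayashi, Misra--Miwa), and nothing further needs to be verified colour-by-colour.

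Instead your sketch mentions this reduction only as motivation and then proceeds with a direct verification, ending by flagging the quantum Serre relations --- in particular $e=2$, $a_{01}=-2$ --- as ``the main obstacle'' without resolving it. That is a genuine gap in the sketch as written: the Serre relations are the one place where a naïve case analysis of boundary configurations becomes genuinely intricate and is easy to get wrong, and you have not carried it out. (There is also a small omission: for $i\neq j$ with $a_{ij}\neq0$ you still need $[e_i,f_j]=0$, which is not a Serre relation; it is of the same local-configuration flavour but should be listed separately.) The fix is simply to commit to the route you already sketched in the first paragraph: prove the $l=1$ case, establish the tensor-product identification of $\mathcal{F}_e^{\mathbf s}$ and $\prec_{\mathbf s}$ with the coproduct action, and let the Hopf-algebra structure do the rest. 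That is also, in substance, how the cited source and its predecessors handle higher level, so it is the closest thing to ``the same approach as the paper'' available here.
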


\noindent We may consider $\mathcal{F}$ as a 
${\ \mathcal{U}_{v}^{\prime}(\widehat{\mathfrak{sl}_{e}})}
$-module by restriction. We denote it by 
the same $\mathcal{F}_{e}^{\mathbf{s}}$ by abuse of notation.

Let $j\in\mathbb{Z}$. For $l$-partitions ${\boldsymbol{\lambda}}$ and ${\boldsymbol
{\mu}}$ of rank $n$ and $n+1$ such that 
$[{\boldsymbol{\mu}}]=[{\boldsymbol{\lambda}}]\cup{\{\gamma\}}$ 
where $\gamma$ is a $j$-node, we define 
${N}_{j}^{\succ}{({\boldsymbol{\lambda}},{\boldsymbol{\mu}}),}$
${N}_{j}^{\prec}{({\boldsymbol{\lambda}},{\boldsymbol{\mu}})}$ and ${N}
_{j}{({\boldsymbol{\lambda}})}$ as in (\ref{Ni}) except that we consider $j$-nodes, 
for $e=\infty$, instead of $i$-nodes, for $e$ finite. 

\begin{theorem}
\cite{jim} \label{TH_Jim}Let $\mathbf{s}\in\mathbb{Z}^{l}$. The Fock space
$\mathcal{F}$ has a structure of an integrable ${\mathcal{U}_{v}(\mathfrak
{sl}_{\infty})}$-module $\mathcal{F}_{\infty}^{\mathbf{s}}$ defined by
\begin{gather*}
E_{j}{\boldsymbol{\lambda}}=\sum_{c([{\boldsymbol{\lambda}}]/[{\boldsymbol
{\mu}}])=j}{v^{-{N}_{j}^{\prec}{({\boldsymbol{\mu}},{\boldsymbol{\lambda}})}
}{\boldsymbol{\mu}}, \quad 
F{_{j}{\boldsymbol{\lambda}}=\sum_{c([{\boldsymbol
{\mu}}]/[{\boldsymbol{\lambda}}])=j}{v^{{N}_{j}^{\succ}{({\boldsymbol{\lambda
}},{\boldsymbol{\mu}})}}}}}{\boldsymbol{\mu}},\\[5pt]
T_{j}{\boldsymbol{\lambda}}=v^{{N}_{j}{({\boldsymbol{\lambda}})}}
{\boldsymbol{\lambda}},
\end{gather*}
for $j\in\mathbb{Z}$. 
The module structure on $\mathcal{F}_{e}^{\mathbf{s}}$ depends on $\mathbf{s}.$
\end{theorem}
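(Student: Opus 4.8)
The plan is to deduce Theorem~\ref{TH_Jim} from Theorem~\ref{jmm} by an ``$e\to\infty$'' argument: for a fixed pair of $l$-partitions of bounded rank, the $\mathfrak{sl}_\infty$-data occurring in the action is literally the $\widehat{\mathfrak{sl}_e}$-data of Theorem~\ref{jmm} once $e$ is chosen large enough. First one checks that the operators $E_j,F_j,T_j^{\pm1}$ are well defined on $\mathcal F$: an $l$-partition has only finitely many addable and removable nodes, so the sums defining $E_j{\boldsymbol{\lambda}}$ and $F_j{\boldsymbol{\lambda}}$ are finite and the integers $N^\succ_j,N^\prec_j,N_j$ are well defined. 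Since $T_j{\boldsymbol{\lambda}}=v^{N_j({\boldsymbol{\lambda}})}{\boldsymbol{\lambda}}$ with $N_j({\boldsymbol{\lambda}})\in\mathbb Z$, each ${\boldsymbol{\lambda}}$ is a weight vector of weight $\Lambda_{\mathbf s}-\sum_j m_j({\boldsymbol{\lambda}})\alpha_j$, where $m_j({\boldsymbol{\lambda}})$ is the number of $j$-nodes of ${\boldsymbol{\lambda}}$, so $\mathcal F$ is a weight module. Local nilpotency of $E_j$ and $F_j$ is elementary: if $|{\boldsymbol{\lambda}}|=n$, then each component of an $l$-partition can contain at most $O(\sqrt n)$ nodes of a fixed content (having $k$ nodes of content $j$ in one component forces that component to contain a staircase of size $\gtrsim k^2$), so $F_j^N{\boldsymbol{\lambda}}=E_j^N{\boldsymbol{\lambda}}=0$ for $N\gg0$. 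Thus it only remains to verify the defining relations of $\mathcal U_v(\mathfrak{sl}_\infty)$: the torus relations $T_iT_j=T_jT_i$, $T_iE_jT_i^{-1}=v^{a_{ij}}E_j$, $T_iF_jT_i^{-1}=v^{-a_{ij}}F_j$ with $(a_{ij})$ the Cartan matrix of type $A_\infty$, the commutation relations $[E_i,F_j]=\delta_{ij}(T_i-T_i^{-1})/(v-v^{-1})$, and the quantum Serre relations.

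For this, fix one such relation $R$ and a basis vector ${\boldsymbol{\lambda}}\in\Pi_{l,n}$. Only finitely many generators occur in $R$, and when $R$ is applied to ${\boldsymbol{\lambda}}$ only finitely many $l$-partitions appear, all of bounded rank; hence there is a bounded interval $I\subset\mathbb Z$ containing all indices occurring in $R$ together with the contents of all nodes, addable nodes and removable nodes of all these $l$-partitions. Choose $e$ large enough that the reduction map $I\to\mathbb Z/e\mathbb Z$ is injective (in particular $e\ge 3$). Then the submatrix of the Cartan matrix of $A_{e-1}^{(1)}$ supported on the residues of $I$ equals the corresponding submatrix of the Cartan matrix of $A_\infty$; for $j\in I$, the $j$-nodes (in the sense $e=\infty$) of the relevant $l$-partitions are exactly their $\overline j$-nodes (in the sense of $\mathcal U_v(\widehat{\mathfrak{sl}_e})$), no $\overline j$-node of content $j\pm e,j\pm2e,\dots$ occurs among them, and the total order $\prec_{\mathbf s}$ used in both settings agrees on these nodes. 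Consequently the integers $N^\succ_{\overline j},N^\prec_{\overline j},N_{\overline j}$ computed inside $\mathcal F_e^{\mathbf s}$ coincide with $N^\succ_{j},N^\prec_{j},N_{j}$, and therefore $E_j,F_j,T_j^{\pm1}$ act on ${\boldsymbol{\lambda}}$ exactly as $e_{\overline j},f_{\overline j},t_{\overline j}^{\pm1}$ do on ${\boldsymbol{\lambda}}$ in $\mathcal F_e^{\mathbf s}$. Hence the relation $R$, applied to ${\boldsymbol{\lambda}}$, becomes the corresponding relation of $\mathcal U_v'(\widehat{\mathfrak{sl}_e})$, which holds by Theorem~\ref{jmm} (injectivity of $I\to\mathbb Z/e\mathbb Z$ ensures that distinct indices in $R$ remain distinct modulo $e$, so no ``affine wrap-around'' spoils the comparison); the derivation $\partial$ of $\widehat{\mathfrak{sl}_e}$ plays no role, as $\mathfrak{sl}_\infty$ has none. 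Since $R$ and ${\boldsymbol{\lambda}}$ were arbitrary, all relations hold on all of $\mathcal F$, and $\mathcal F_\infty^{\mathbf s}$ is an integrable $\mathcal U_v(\mathfrak{sl}_\infty)$-module.

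The main obstacle — or rather the one point requiring care — is the uniform-largeness bookkeeping in the torus part: $N_{\overline j}({\boldsymbol{\lambda}})$ for finite $e$ counts addable and removable nodes of content $j,\ j\pm e,\ j\pm2e,\dots$, while $N_j({\boldsymbol{\lambda}})$ counts only those of content exactly $j$, so one must pin down the finite content window $I$ relevant to the given relation and basis vector \emph{before} choosing $e$, and choose $e$ large relative to $I$, in order to identify the two computations. Once this is done, the verification reduces mechanically to Theorem~\ref{jmm}. (Alternatively, one could check the quantum Serre relations directly on the standard basis, as in \cite{jim}, but the reduction above avoids those calculations entirely.)
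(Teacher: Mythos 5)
The paper itself does not prove this theorem; it is cited from \cite{jim}. Your reduction to Theorem~\ref{jmm} by comparing the $\mathfrak{sl}_\infty$ action with the $\widehat{\mathfrak{sl}_e}$ action for $e$ large is therefore a genuinely different route, and the overall strategy is sound: fix a relation and a basis vector, localize everything in a finite content window, and inherit the relation from the affine case. There is, however, a quantitative slip in the criterion you impose on $e$. Asking only that $I\to\mathbb{Z}/e\mathbb{Z}$ be injective allows $e=|I|$; for $I$ an interval this makes the map a bijection, and the two endpoints of $I$ then land on adjacent nodes of the affine Dynkin circle. In that case $a^{A^{(1)}_{e-1}}_{\overline{\min I},\,\overline{\max I}}=-1$ while $a^{A_\infty}_{\min I,\max I}=0$, so for the pair $\{\min I,\max I\}$ the torus relation $T_iE_jT_i^{-1}=v^{a_{ij}}E_j$ and the quantum Serre relation would be checked against the \emph{wrong} affine relation, and the comparison breaks. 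The correct requirement is $e>\max_{i,j\in I}|i-j|+1$ (equivalently $e>|I|$ for an interval), which makes the image of $I$ a proper arc of the circle so the Cartan submatrices genuinely agree; the same strict inequality is also what rules out contents $j\pm ke$ ($k\ge1$) falling back into $I$, so that $\overline j$-nodes of the relevant $l$-partitions are exactly their $j$-nodes. With this adjustment the argument is complete. For orientation, Proposition~\ref{Prop_compa_action} of the paper records an \emph{exact}, not asymptotic, link between the two module structures (expressing $e_i,f_i,t_i$ in terms of $E_j,F_j,T_j$), but it runs in the opposite direction to what you need; the source \cite{jim} presumably verifies the $\mathfrak{sl}_\infty$ relations directly, so your large-$e$ device is a reasonable substitute for that computation.
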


The following result is implicit in \cite[Prop 3.5]{jim}.

\begin{proposition}
\label{Prop_compa_action}The ${\mathcal{U}_{v}^{\prime}(\widehat{\mathfrak
{sl}_{e}})}$ and ${\mathcal{U}_{v}(\mathfrak{sl}_{\infty})}$-module
structures $\mathcal{F}_{e}^{\mathbf{s}}$ and $\mathcal{F}_{\infty
}^{\mathbf{s}}$ are compatible in the sense that we may write 
the action of $e_{i}, f_{i}$ and $t_{i}$, for $i\in\mathbb{Z}/e\mathbb{Z}$, as follows:
\begin{align*}
e_{i} &  =\sum_{j\in\mathbb{Z},j\equiv i(\text{mod }e)}\left(  \prod_{r\geq
1}T_{j-re}^{-1}\right)  E_{j},\\
f_{i} &  =\sum_{j\in\mathbb{Z},j\equiv i(\text{mod }e)}\left(  \prod_{r\geq
1}T_{j+re}\right)  F_{j},\\
t_{i} &  =\prod_{j\in\mathbb{Z},j\equiv i(\text{mod }e)}T_{j}.
\end{align*}
\end{proposition}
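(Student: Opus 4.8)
The plan is to verify the three identities by computing the action of both sides on an arbitrary $l$-partition ${\boldsymbol{\lambda}}$ and comparing coefficients of each $l$-partition ${\boldsymbol{\mu}}$ that appears. By the definitions in Theorems \ref{jmm} and \ref{TH_Jim}, all three operators act "by adding or removing nodes of a prescribed type", so the combinatorics reduces entirely to bookkeeping of the exponents of $v$. The key observation is that an $i$-node for ${\mathcal{U}_{v}(\widehat{\mathfrak{sl}_{e}})}$ is exactly a $j$-node for ${\mathcal{U}_{v}(\mathfrak{sl}_{\infty})}$ for some $j \equiv i \ (\mathrm{mod}\ e)$, and for a fixed pair $({\boldsymbol{\lambda}},{\boldsymbol{\mu}})$ with $[{\boldsymbol{\mu}}] = [{\boldsymbol{\lambda}}] \cup \{\gamma\}$ and $\gamma$ an $i$-node, there is a unique such $j = c(\gamma)$. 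So on the right-hand side of the formula for $f_i$, only the single summand indexed by $j = c(\gamma)$ contributes to the coefficient of ${\boldsymbol{\mu}}$, and I must check that $\bigl(\prod_{r\geq 1} T_{j+re}\bigr) F_j$ produces the coefficient $v^{N_i^{\succ}({\boldsymbol{\lambda}},{\boldsymbol{\mu}})}$.

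Concretely, first I would fix $i$, $j \equiv i \ (\mathrm{mod}\ e)$, and the pair $({\boldsymbol{\lambda}},{\boldsymbol{\mu}})$ differing by an $i$-node $\gamma$ with $c(\gamma) = j$. Applying $F_j$ to ${\boldsymbol{\lambda}}$ gives $v^{N_j^{\succ}({\boldsymbol{\lambda}},{\boldsymbol{\mu}})}{\boldsymbol{\mu}}$ plus terms involving other $l$-partitions; then each $T_{j+re}$ acts diagonally on ${\boldsymbol{\mu}}$ by $v^{N_{j+re}({\boldsymbol{\mu}})}$. Hence the coefficient of ${\boldsymbol{\mu}}$ in $f_i {\boldsymbol{\lambda}}$ computed via the right-hand side is $v^{N_j^{\succ}({\boldsymbol{\lambda}},{\boldsymbol{\mu}}) + \sum_{r\geq 1} N_{j+re}({\boldsymbol{\mu}})}$. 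So the whole proof amounts to the numerical identity
\[
N_i^{\succ}({\boldsymbol{\lambda}},{\boldsymbol{\mu}}) = N_{j}^{\succ}({\boldsymbol{\lambda}},{\boldsymbol{\mu}}) + \sum_{r \geq 1} N_{j+re}({\boldsymbol{\mu}}),
\]
and the analogous identities
\[
-N_i^{\prec}({\boldsymbol{\mu}},{\boldsymbol{\lambda}}) = -N_{j}^{\prec}({\boldsymbol{\mu}},{\boldsymbol{\lambda}}) - \sum_{r \geq 1} N_{j - re}({\boldsymbol{\lambda}}), \qquad N_i({\boldsymbol{\lambda}}) = \sum_{j \equiv i} N_j({\boldsymbol{\lambda}}),
\]
for the $e_i$ and $t_i$ cases. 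The last one is immediate: an addable or removable $i$-node is an addable or removable $j$-node for a unique $j \equiv i$, so summing $N_j$ over the (finitely many, by integrability) relevant $j$ recovers $N_i$. For the $f_i$ identity I would unwind the definitions: the set of addable $i$-nodes of ${\boldsymbol{\lambda}}$ strictly $\succ_{\mathbf{s}}$-above $\gamma$ splits according to content $c(\gamma') = j'$; those with $j' > j$ all satisfy $\gamma' \succ_{\mathbf{s}} \gamma$ automatically and contribute to $\sum_{r \geq 1} N_{j+re}$ (note $j' \equiv i \pmod e$ and $j' > j$ means $j' = j + re$ for some $r \geq 1$), those with $j' = j$ are counted with the $\succ_{\mathbf{s}}$-refinement in $N_j^{\succ}$, and those with $j' < j$ do not appear on either side; the removable $i$-nodes of ${\boldsymbol{\mu}}$ are handled symmetrically with a sign. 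Care is needed because $N_{j+re}({\boldsymbol{\mu}})$ counts addable and removable $(j+re)$-nodes of ${\boldsymbol{\mu}}$, not of ${\boldsymbol{\lambda}}$, but since ${\boldsymbol{\mu}}$ and ${\boldsymbol{\lambda}}$ differ only by the $j$-node $\gamma$, their sets of addable/removable $(j+re)$-nodes coincide for $r \geq 1$ (adding $\gamma$ can only change addability/removability of nodes of content $j-1$, $j$, or $j+1$), so one may freely replace ${\boldsymbol{\mu}}$ by ${\boldsymbol{\lambda}}$ there.

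The main obstacle, and the only genuinely delicate point, is the careful comparison of the $\succ_{\mathbf{s}}$-order with the content order when several nodes share the same content $j$: one must confirm that the refinement by the second coordinate $c_1 < c_2$ used in $\prec_{\mathbf{s}}$ is exactly what makes the split across the $T_{j+re}$ factors (which do not see this refinement) consistent with $N_j^{\succ}$ (which does). This is precisely where the convention of \cite{U} recalled before Theorem \ref{jmm} is used, and I would check it by a direct node-counting argument, possibly after observing that this compatibility is already essentially contained in \cite[Prop. 3.5]{jim}. Once the three numerical identities are established, the proposition follows by linearity since the $l$-partitions form a basis of $\mathcal{F}$.
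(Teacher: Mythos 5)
The paper states this proposition without proof, remarking only that it is implicit in \cite[Prop.~3.5]{jim}; your direct verification on the standard basis of $l$-partitions is the natural way to supply the missing argument, and it is correct. The reduction to the three numerical identities on the $N$-statistics is exactly right, and the two combinatorial facts you use are the ones that make them work. First, $\succ_{\mathbf{s}}$ refines the order by content: among $i$-nodes, any node of content $j+re$ with $r\geq 1$ is automatically $\succ_{\mathbf{s}}\gamma$, so the factors $T_{j\pm re}$ (indexed by contents $\neq j$) never need the component-index refinement, while at content $j$ the refinement built into $N_j^{\succ}$ is literally the same as the one built into $N_i^{\succ}$. Second, the addable and removable $(j\pm re)$-nodes of $\boldsymbol{\lambda}$ and $\boldsymbol{\mu}$ coincide for $r\geq 1$, because adding a box of content $j$ can only perturb addability/removability at contents $j-1$, $j$, $j+1$, and $e\geq 2$ keeps $j\pm re$ outside that set. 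Once phrased this way, the ``delicate point'' you raise at the end is automatic rather than an obstacle. One small imprecision: in your displayed $e_i$ identity the bare computation yields $N_{j-re}(\boldsymbol{\mu})$ rather than $N_{j-re}(\boldsymbol{\lambda})$, but as you yourself observe these agree for $r\geq 1$, so nothing is affected.
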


\begin{remark}
The infinite sums and products in the proposition reduce in fact to
finite ones since the number of nodes in ${\boldsymbol{\lambda}}$ is finite.
\end{remark}

The empty multipartition $\mathbf{\emptyset}$ is a highest weight vector in
$\mathcal{F}_{e}^{\mathbf{s}}$ and $\mathcal{F}_{\infty}^{\mathbf{s}}$ of
weight $\Lambda_{\mathfrak{s}}$ and $\Lambda_{\mathbf{s}}$, respectively. We
then define $V_{e}({\mathbf{s}})$ and $V_{\infty}({\mathbf{s}})$ as the 
highest weight modules
$\mathcal{U}^{\prime}_{v}(\widehat{\mathfrak{sl}_{e}}).\mathbf{\emptyset} $
and ${\mathcal{U}_{v}(\mathfrak{sl}_{\infty}).}\mathbf{\emptyset}$,
respectively. Observe that the module structure on $V_{e}({\mathbf{s}})$ 
really depends on $\mathbf{s}$ and not only on its class $\mathfrak{s}$ modulo $e$. 
By the previous proposition, 
it follows that $V_{\infty}({\mathbf{s}})$ is endowed 
with the structure of a 
$\mathcal{U}^{\prime}_{v}(\widehat{\mathfrak{sl}_{e}})$-module and 
$V_{e}({\mathbf{s}})$ coincides with the $\mathcal{U}^{\prime}
_{v}(\widehat{\mathfrak{sl}_{e}})$-submodule of $V_{\infty
}({\mathbf{s}})$ generated by the highest weight vector $\mathbf{\emptyset}$.

\subsection{Uglov's canonical bases}

We now briefly recall Uglov's plus canonical basis of the Fock
spaces. Let $\mathbb{A}(v)$ be the ring of rational functions which have no pole at
$v=0$. Set
\begin{align*}
\mathcal{L}  &  :=\bigoplus_{n\geq0}\bigoplus_{{\boldsymbol{\lambda}}\in
\Pi_{l,n}}\mathbb{A}(v){\boldsymbol{\lambda}}\;\;\text{ and}\\
\mathcal{B}  &  :=\{{\boldsymbol{\lambda}}\text{ }(\text{mod }v\mathcal{L}
)\mid{\boldsymbol{\lambda}\in}\Pi_{l}\}.
\end{align*}

\begin{theorem}
\cite{FLOTW} The pair $(\mathcal{L},\mathcal{B})$ is a crystal basis for
$\mathcal{F}_{e}^{\mathbf{s}}$ and $\mathcal{F}_{\infty}^{\mathbf{s}}.$
\end{theorem}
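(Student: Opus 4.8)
The plan is to deduce the crystal basis property for the whole Fock space from the analogous statement on the highest weight modules $V_e(\mathbf{s})$ and $V_\infty(\mathbf{s})$, combined with the decomposition of $\mathcal{F}$ into a direct sum of such modules. The key point is that $\mathcal{F}_e^{\mathbf{s}}$ and $\mathcal{F}_\infty^{\mathbf{s}}$ are integrable (Theorems \ref{jmm} and \ref{TH_Jim}), so by the general theory of crystal bases for integrable modules over symmetrizable Kac--Moody quantum groups, it suffices to check the defining conditions of a crystal basis on each irreducible summand. The first step is therefore to recall (from \cite{U} or \cite{jim}) the decomposition of the Fock space $\mathcal{F}_e^{\mathbf{s}}$ into irreducible highest weight $\mathcal{U}'_v(\widehat{\mathfrak{sl}_e})$-modules, each generated by a suitable highest weight vector indexed by an $l$-partition that is $e$-cylindric (or by the appropriate combinatorial parametrization of highest weight vectors), and likewise for $\mathcal{F}_\infty^{\mathbf{s}}$.

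Next I would verify that the pair $(\mathcal{L},\mathcal{B})$ is compatible with this direct sum decomposition, i.e. that $\mathcal{L} = \bigoplus \mathcal{L}_\alpha$ and $\mathcal{B} = \bigsqcup \mathcal{B}_\alpha$ where $(\mathcal{L}_\alpha,\mathcal{B}_\alpha)$ is the intersection of $(\mathcal{L},\mathcal{B})$ with the $\alpha$-th summand. This follows because the standard basis vectors ${\boldsymbol{\lambda}}$ each lie in a single summand, so the $\mathbb{A}(v)$-lattice generated by the $l$-partitions splits accordingly, and reduction mod $v\mathcal{L}$ respects the splitting. Then, on each highest weight summand, one invokes the known construction of the lower crystal basis of an integrable highest weight module: the Kashiwara operators $\tilde{e}_i,\tilde{f}_i$ (resp. $\tilde{E}_j,\tilde{F}_j$) preserve $\mathcal{L}_\alpha$ and induce on $\mathcal{B}_\alpha$ the crystal graph of $V(\Lambda)$. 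The content here is essentially in checking that the lattice $\mathcal{L}$ as defined — spanned by all $l$-partitions over $\mathbb{A}(v)$ — agrees on each summand with Kashiwara's lattice generated by applying lowering operators to the highest weight vector; this is exactly where the explicit combinatorial formulas for $e_i,f_i$ (and $E_j,F_j$) in Theorems \ref{jmm} and \ref{TH_Jim}, together with the fact that the coefficients $v^{\pm N_i^{\gtrless}}$ lie in $\mathbb{A}(v)$ and are $\equiv v^{\text{something}}$, force the standard basis to be "crystal-compatible."

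The main obstacle, I expect, is precisely this last verification: showing that $\mathcal{B}$ is a basis of $\mathcal{L}/v\mathcal{L}$ stable under the Kashiwara operators and that it coincides with the abstract crystal of the corresponding highest weight module. In the level-one case this is classical (Misra--Miwa), and for higher levels the combinatorics of the $\prec_{\mathbf{s}}$-order in the definition of $N_i^{\succ},N_i^{\prec}$ was arranged in \cite{U} exactly so that the Kashiwara operators act by adding/removing "good nodes" in a way compatible with the ladder/abacus description. Concretely, one would show that for each $i$ the $i$-signature procedure (reading addable/removable $i$-nodes in $\prec_{\mathbf{s}}$-order, cancelling $+-$ pairs) computes $\tilde{e}_i{\boldsymbol{\lambda}}$ and $\tilde{f}_i{\boldsymbol{\lambda}}$ modulo $v\mathcal{L}$, and that this matches the FLOTW crystal; the analogous statement for $\mathcal{U}_v(\mathfrak{sl}_\infty)$ uses the content $c(\gamma)=j$ in place of the residue. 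Granting the single-summand statement, the direct sum then yields the theorem for both module structures simultaneously, since $\mathcal{L}$ and $\mathcal{B}$ are manifestly the same $\mathbb{A}(v)$-lattice and the same set of cosets regardless of whether we view $\mathcal{F}$ as an $\mathcal{U}'_v(\widehat{\mathfrak{sl}_e})$- or $\mathcal{U}_v(\mathfrak{sl}_\infty)$-module.
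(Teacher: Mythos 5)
The paper offers no proof of this statement; it simply cites \cite{FLOTW}. Your proposal, however, contains a concrete error that makes the reduction fail.

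The central step of your argument — decomposing $\mathcal{F}_{e}^{\mathbf{s}}$ into irreducible highest weight summands and then claiming ``the standard basis vectors ${\boldsymbol{\lambda}}$ each lie in a single summand, so the $\mathbb{A}(v)$-lattice generated by the $l$-partitions splits accordingly'' — is false. Already in level $1$ with $e=2$, the highest weight vectors of the Fock space beyond the vacuum are genuine $\mathbb{Q}(v)$-linear combinations of partitions, not individual partitions; in general the multipartition basis is transverse to, not aligned with, the decomposition into irreducibles. Consequently $\mathcal{L}$ does not split as $\bigoplus\mathcal{L}_\alpha$ in the way you need, and the whole reduction to a single irreducible summand collapses. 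There is also a circularity worry: even if the splitting existed, the ``known construction of the lower crystal basis of an integrable highest weight module'' inside the Fock space is exactly the content one is trying to establish, not an independent input.

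The route actually used in the literature (Misra--Miwa and Hayashi for level $1$, then Jimbo--Misra--Miwa--Okado and FLOTW for higher level) goes the other way around: one first proves the crystal basis statement for the \emph{level-one} Fock space by a direct verification that the Kashiwara operators preserve the $\mathbb{A}(v)$-lattice spanned by partitions (this is where the explicit $v^{N_i^{\succ}}$-type coefficients and the $i$-signature cancellation enter), and then obtains the level-$l$ case from the $\mathcal{U}_v'(\widehat{\mathfrak{sl}_e})$-module isomorphism $\mathcal{F}_{e}^{\mathbf{s}}\cong\mathcal{F}_{e}^{s_1}\otimes\cdots\otimes\mathcal{F}_{e}^{s_l}$ together with Kashiwara's tensor product theorem for crystal bases. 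The lattice and basis produced by the tensor product construction coincide with the $(\mathcal{L},\mathcal{B})$ defined here, and the good-node rule for $\widetilde{e}_i,\widetilde{f}_i$ is precisely the tensor product rule unwound in the abacus/Young diagram combinatorics. The $\mathfrak{sl}_\infty$ case is analogous with contents $c(\gamma)=j$ in place of residues. If you want to repair your write-up, replace the decomposition-into-irreducibles step with the tensor-product reduction to level one; the rest of your discussion of the $i$-signature procedure then becomes the correct closing verification.
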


Note that although the crystal lattice $\mathcal{L}$ and 
the basis $\mathcal{B}$ of $\mathcal{L}/v\mathcal{L}$ are the same for
$\mathcal{F}_{e}^{\mathbf{s}}$ and $\mathcal{F}_{\infty}^{\mathbf{s}}$, 
the induced crystal structures $\mathcal{B}_{e}$ and $\mathcal{B}
_{\infty}$ on $\mathcal{B}$ do not coincide. The crystal structure $\mathcal{B}
_{e}$ is obtained as follows. Let ${\boldsymbol{\lambda}}$ be an $l$-partition,  
and $i\in\mathbb{Z}/e\mathbb{Z}$. We consider the set of addable
and removable $i$-nodes of ${\boldsymbol{\lambda}}$. 
We read the nodes in the increasing order with respect to $\prec_{{\mathbf{s}}}$, 
and let $w_{i}$ be the resulting word of the nodes. 
If a removable $i$-node appears just before an addable $i$-node, we delete both 
and continue the same procedure as many times as possible. In the end, we reach a word 
$\widetilde{w}_{i}$ of nodes such that the first $p$ nodes are addable and the last $q$ nodes are removable, for some $p, q\in\mathbb{N}$. 
If $p>0,$ let $\gamma$ be
the rightmost addable $i$-node in $\widetilde{w}_{i}$. The node $\gamma$ is called the \emph{good} $i$-node of $\boldsymbol{\lambda}$. Then, the
crystal $\mathcal{B}_{e}$ may be read off from its crystal graph:

\begin{itemize}
\item  vertices: $l$-partitions whose nodes are colored with residues.

\item  edges: $\displaystyle{{\boldsymbol{\lambda}}}\overset{i}{\rightarrow
}{{\boldsymbol{\mu}}}$ if and only if ${\boldsymbol{\mu}}$ is obtained by
adding a good $i$-node to ${\boldsymbol{\lambda}}$.
\end{itemize}

We denote by $B_{e}({\mathbf{s}})$ the connected component of 
$\mathcal{B}_{e}$ which contains the highest weight vertex $\mathbf{\emptyset}$. 
We may identify $B_{e}({\mathbf{s}})$ with the crystal graph of 
$V_{e}({\mathbf{s}}).$ The crystal graph of $\mathcal{B}_{\infty}$ is obtained in a similar manner: 
we use $j$-nodes ($j\in\mathbb{Z}$), for $e=\infty$, instead of $i$-nodes, for
$e$ finite. We may also identify the crystal graph of $V_{\infty}({\mathbf{s}})$
with $B_{\infty}({\mathbf{s}})$, the connected component of $\mathcal{B}
_{\infty}$ which contains the highest weight vertex $\mathbf{\emptyset}$.

\medskip

Let $e\in\mathbb{Z}_{\geq2}\cup\{\infty\}.$ We denote 
$$
{\mathcal{U}_{v}(\mathfrak{g})}=
\begin{cases}
{\mathcal{U}_{v}^{\prime}(\widehat{\mathfrak{sl}_{e}})}\quad&(\text{if $e<\infty$})\\
{\mathcal{U}_{v}(\mathfrak{sl}_{\infty})}\quad&(\text{if $e=\infty$})
\end{cases}
$$
for short. We define a $\mathbb{Z}[v]$-lattice 
$\mathcal{L}_{\mathbb{Z}}$ of $\mathcal{L}$ by
\[
\mathcal{L}_{\mathbb{Z}}:=\bigoplus_{n\geq0}\bigoplus_{{\boldsymbol{\lambda}
}\in\Pi_{l,n}}\mathbb{Z}[v]{\boldsymbol{\lambda}.}
\]
In \cite{U}, Uglov introduced a bar-involution $\overline{
\begin{tabular}
[c]{l}
\ \
\end{tabular}
\ }$ on $\mathcal{F}_{e}^{\mathbf{s}}$, which is defined by 
\[
\overline{u.f}=\overline{u}.\overline{f},\text{ for }u\in{\mathcal{U}
_{v}(\mathfrak{g})}\text{ and }f\in\mathcal{F}_{e}^{\mathbf{s}},\;\;
\text{and $\overline{\mathbf{\emptyset}}=\mathbf{\emptyset}$.}
\]
Such a bar-involution is easier to define for the Fock space 
$\mathcal{F}_{\infty}^{\mathbf{s}}$ as is explained in \cite[\S 3.9]{BK}. In the two cases, this leads 
 to the following Theorem-definition.

\begin{theorem}
\cite{U}\label{TH_Bc_Fock} Let $\mathbf{s}\in\mathbb{Z}^{l}$ and
$e\in\mathbb{Z}_{\geq2}\cup\{\infty\}.$ There exists a unique basis 
$\mathcal{G}_{e}(\mathbf{s})=\{G_{e}({\boldsymbol{\lambda},\mathbf{s})}
\mid{\boldsymbol{\lambda}\in}\Pi_{l}\}$ of $\mathcal{F}_{e}^{\mathbf{s}}$ 
such that the basis elements are characterized by the following two conditions.

\begin{enumerate}
\item $\overline{G_{e}({\boldsymbol{\lambda},\mathbf{s})}}{=}G_{e}
({\boldsymbol{\lambda},\mathbf{s}),}$

\item $G_{e}({\boldsymbol{\lambda},\mathbf{s})\equiv\boldsymbol{\lambda}}$ $($mod
$v\mathcal{L}_{\mathbb{Z}}).$
\end{enumerate}
\end{theorem}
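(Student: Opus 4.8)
The plan is to prove this by the standard Kazhdan–Lusztig–type argument, i.e.\ existence and uniqueness of a bar-invariant basis unitriangular against the standard basis, but carried out with respect to a suitable partial order on $\Pi_l$ which refines the natural dominance-type order and for which the bar-involution acts unitriangularly. First I would make precise the relevant order: following Uglov, one orders $l$-partitions of a fixed rank $n$ (with fixed multicharge $\mathbf{s}$, and in the affine case $e$) by a dominance order coming from the crystal/weight data, so that $\overline{\boldsymbol{\lambda}} = \boldsymbol{\lambda} + \sum_{\boldsymbol{\mu} < \boldsymbol{\lambda}} a_{\boldsymbol{\lambda}\boldsymbol{\mu}}(v)\,\boldsymbol{\mu}$ with $a_{\boldsymbol{\lambda}\boldsymbol{\mu}}(v)\in\mathbb{Z}[v,v^{-1}]$ and the sum finite. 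The key input here is that the bar-involution on $\mathcal{F}_e^{\mathbf{s}}$, defined by $\overline{u.f}=\overline u.\overline f$ and $\overline{\boldsymbol\emptyset}=\boldsymbol\emptyset$ as in the statement (or via the Fock-space description of \cite[\S3.9]{BK} when $e=\infty$), is a well-defined, $\mathbb{Q}$-antilinear (with $\overline v = v^{-1}$) involution preserving $\mathcal{L}_{\mathbb{Z}}$ up to the triangularity above; this is exactly Uglov's computation of the bar-involution on the standard basis in terms of the action of the quantum group generators. Uniqueness is then immediate: if $G$ and $G'$ both satisfy (1) and (2), then $G-G'$ is bar-invariant and lies in $v\mathcal{L}_{\mathbb{Z}}$, and an easy induction on the order (using that $\overline{v^k\boldsymbol{\mu}}=v^{-k}\overline{\boldsymbol{\mu}}$ has a term $v^{-k}\boldsymbol{\mu}$ plus strictly lower terms) forces $G-G'=0$.

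For existence I would run the usual recursion. Order $\Pi_{l,n}$ compatibly with the partial order (say by a linear extension), and construct $G_e(\boldsymbol{\lambda},\mathbf{s})$ by downward/upward induction: set $G_e(\boldsymbol\emptyset,\mathbf{s})=\boldsymbol\emptyset$ for $n=0$, and for general $\boldsymbol{\lambda}$ assume the basis elements $G_e(\boldsymbol{\mu},\mathbf{s})$ have been built for all $\boldsymbol{\mu}<\boldsymbol{\lambda}$ (same rank). Write $\overline{\boldsymbol{\lambda}}=\boldsymbol{\lambda}+\sum_{\boldsymbol{\mu}<\boldsymbol{\lambda}}a_{\boldsymbol{\lambda}\boldsymbol{\mu}}(v)\boldsymbol{\mu}$; one then corrects by subtracting suitable $\mathbb{Z}[v]$-combinations of the already-constructed $G_e(\boldsymbol{\mu},\mathbf{s})$ to kill, successively from the top, the coefficients of $a_{\boldsymbol{\lambda}\boldsymbol{\mu}}(v)$ that do not lie in $v\mathbb{Z}[v]$. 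Concretely, the elementary lemma is: given a bar-invariant $b_{\boldsymbol\mu}(v)$ (here $b_{\boldsymbol\mu}=a_{\boldsymbol\lambda\boldsymbol\mu}$ after the corrections from strictly higher $\boldsymbol\nu$ have been applied), there is a unique $c(v)\in v\mathbb{Z}[v]$ with $b_{\boldsymbol\mu}(v)-c(v)-\overline{c(v)}\in v\mathbb{Z}[v]$ — in fact one subtracts the "non-negative part'' appropriately. Iterating and summing the corrections yields $G_e(\boldsymbol{\lambda},\mathbf{s})=\boldsymbol{\lambda}+\sum_{\boldsymbol{\mu}<\boldsymbol{\lambda}}d_{\boldsymbol{\lambda}\boldsymbol{\mu}}(v)\boldsymbol{\mu}$ with $d_{\boldsymbol{\lambda}\boldsymbol{\mu}}(v)\in v\mathbb{Z}[v]$, which is bar-invariant by construction and satisfies (2); since each rank-$n$ piece of $\mathcal{F}_e^{\mathbf{s}}$ is finite-dimensional the recursion terminates.

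The main obstacle — and the only genuinely nontrivial point — is establishing the triangularity of the bar-involution on the standard basis, i.e.\ that $\overline{\boldsymbol{\lambda}}$ is $\boldsymbol{\lambda}$ plus a finite $\mathbb{Z}[v,v^{-1}]$-combination of strictly smaller $l$-partitions (of the same rank). For the $\mathfrak{sl}_\infty$ Fock space this is comparatively transparent via the description in \cite[\S3.9]{BK}; for the affine case one must invoke Uglov's construction, where the bar-involution is built using the bar-involution on $\mathcal{U}_v'(\widehat{\mathfrak{sl}_e})$ together with compatibility with the action on the Fock space via the straightening/commutation relations, and the required order is exactly the dominance order that controls these relations. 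I would therefore structure the writeup so that the triangularity statement is quoted from \cite{U} (and \cite{BK}) as the single external ingredient, after which existence and uniqueness are the formal Kazhdan–Lusztig argument sketched above; the compatibility $V_e(\mathbf{s})\subseteq V_\infty(\mathbf{s})$ and Proposition~\ref{Prop_compa_action} are not needed for this theorem itself but will matter later for comparing $\mathcal{G}_e$ and $\mathcal{G}_\infty$.
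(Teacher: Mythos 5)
The paper does not actually prove this theorem: it is quoted verbatim from Uglov \cite{U} (with the $e=\infty$ case via \cite[\S 3.9]{BK}), and the surrounding text only records that the bar-involution exists and is compatible with the quantum group action. Your reconstruction is the standard Kazhdan--Lusztig existence-and-uniqueness argument, and it correctly isolates the one genuinely nontrivial external input --- the unitriangularity of $\overline{\ \cdot\ }$ on the standard basis with respect to a suitable dominance-type order, finiteness of the sum, and coefficients in $\mathbb{Z}[v,v^{-1}]$ --- which is precisely what Uglov establishes through the wedge-space straightening relations. So the proposal matches the intended route.

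One small slip worth fixing: the elementary lemma in the existence recursion should be stated for \emph{anti}-invariant Laurent polynomials, not bar-invariant ones. The quantity one corrects at the maximal remaining $\boldsymbol\mu$ is the coefficient of $\boldsymbol\mu$ in $\overline{\boldsymbol\lambda}-\boldsymbol\lambda$ (after subtracting off corrections coming from strictly higher $\boldsymbol\nu$), and the involutivity $\overline{\overline{\ \cdot\ }}=\mathrm{id}$ forces this coefficient $a(v)$ to satisfy $\overline{a(v)}=-a(v)$. The correct lemma is then: for any $a(v)\in\mathbb{Z}[v,v^{-1}]$ with $\overline{a}=-a$ there is a unique $d(v)\in v\mathbb{Z}[v]$ with $a(v)=d(v)-\overline{d(v)}$. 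As you stated it (for bar-invariant $b$ and with $b-c-\overline{c}\in v\mathbb{Z}[v]$), the construction would require $b(0)=0$, which is not automatic; the anti-invariant version avoids this issue and is the one that actually arises in the recursion. This does not affect the overall correctness of the approach.
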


The basis $\mathcal{G}_{e}(\mathbf{s})$ is called the \emph{plus canonical basis} of
$\mathcal{F}_{e}^{\mathbf{s}}$. It strongly depends on $e\in\mathbb{Z}_{\geq
2}\cup\{\infty\}.$ The purpose of the next theorem is to identify the Kashiwara-Lusztig 
canonical basis of $V_{e}({\mathbf{s}})$ with a subset of $\mathcal{G}_{e}(\mathbf{s}).$

\begin{theorem}
\cite{U}\label{Th_U_Comp} Let $\mathbf{s}\in \mathbb{Z}^{l}$ and $e\in 
\mathbb{Z}_{\geq 2}\cup \{\infty \}.$ Define 
\begin{equation*}
\mathcal{G}_{e}^{\circ }(\mathbf{s})=\mathcal{G}_{e}(\mathbf{s})\cap V_{e}
(\mathbf{s}).
\end{equation*}
Then $\mathcal{G}_{e}^{\circ }(\mathbf{s})$ coincides with the canonical
basis of the irreducible highest weight ${\mathcal{U}_{v}(\mathfrak{g})}$
-module$\ V_{e}({\mathbf{s}})$. Moreover, $G_{e}({\boldsymbol{\lambda
},\mathbf{s})\in }\mathcal{G}_{e}^{\circ }(\mathbf{s})$ if and only if 
${\boldsymbol{\lambda}\in }B_{e}({\mathbf{s}})$.
\end{theorem}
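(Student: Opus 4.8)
The plan is to match two characterizations of the same basis: the one in Theorem~\ref{TH_Bc_Fock} for Uglov's plus canonical basis, and the Kashiwara--Lusztig characterization of the global crystal (canonical) basis of the integrable irreducible highest weight module $V_e(\mathbf{s})$. Since $\emptyset$ is a genuine highest weight vector of $\mathcal{F}_e^{\mathbf{s}}$ of dominant integral weight ($\Lambda_{\mathfrak{s}}$ if $e<\infty$, $\Lambda_{\mathbf{s}}$ if $e=\infty$), the module $V_e(\mathbf{s})=\mathcal{U}_v(\mathfrak{g}).\emptyset$ is the irreducible module $L(\Lambda)$; being integrable with dominant integral highest weight, it carries a global crystal basis $\{G(b)\mid b\in B(\Lambda)\}$, which is the unique family of vectors of $V_e(\mathbf{s})$ that is fixed by the bar involution of $V_e(\mathbf{s})$ (the $\mathbb{Q}$-semilinear involution with $\overline{\emptyset}=\emptyset$ and $\overline{u.w}=\bar u.\bar w$) and congruent to $b$ modulo $v$ times the crystal lattice of $V_e(\mathbf{s})$.

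Next I would record the three compatibilities needed. (i) Uglov's bar involution on $\mathcal{F}_e^{\mathbf{s}}$ preserves $V_e(\mathbf{s})$, because $\overline{u.\emptyset}=\bar u.\overline{\emptyset}=\bar u.\emptyset\in V_e(\mathbf{s})$ for $u\in\mathcal{U}_v(\mathfrak{g})$, and the restricted involution obeys the defining relations of the bar involution of $V_e(\mathbf{s})$, hence coincides with it. (ii) As already noted in the excerpt, $\emptyset$ is a highest weight vector whose class in $\mathcal{L}/v\mathcal{L}$ is a highest weight element of the crystal $\mathcal{B}$; by the compatibility of crystal bases with the decomposition of an integrable module into irreducibles, $\bigl(\mathcal{L}\cap V_e(\mathbf{s}),\,\{\boldsymbol{\lambda}\bmod v\mathcal{L}\mid\boldsymbol{\lambda}\in B_e(\mathbf{s})\}\bigr)$ is a crystal basis of $V_e(\mathbf{s})$, so its crystal lattice is $\mathcal{L}\cap V_e(\mathbf{s})$ and the crystal vector indexed by $\boldsymbol{\lambda}\in B_e(\mathbf{s})$ is the class of the standard Fock vector $\boldsymbol{\lambda}$. (iii) Lusztig's construction places $G(b)$ inside the $\mathbb{Z}[v,v^{-1}]$-submodule of $\mathcal{F}_e^{\mathbf{s}}$ generated by $\emptyset$, which lies in the $\mathbb{Z}[v,v^{-1}]$-span of the standard basis; intersecting with $\mathcal{L}$ and using $\mathbb{A}(v)\cap\mathbb{Z}[v,v^{-1}]=\mathbb{Z}[v]$ gives $G(b)\in\mathcal{L}_{\mathbb{Z}}$.

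Now for $\boldsymbol{\lambda}\in B_e(\mathbf{s})$ let $G(\boldsymbol{\lambda})\in V_e(\mathbf{s})$ be the global basis vector indexed by the crystal element $\boldsymbol{\lambda}\bmod v\mathcal{L}$. By (i) it is fixed by Uglov's bar involution, and by (ii)--(iii) it lies in $\mathcal{L}_{\mathbb{Z}}$ with $G(\boldsymbol{\lambda})\equiv\boldsymbol{\lambda}\pmod{v\mathcal{L}_{\mathbb{Z}}}$; thus $G(\boldsymbol{\lambda})$ satisfies conditions (1) and (2) of Theorem~\ref{TH_Bc_Fock}, and the uniqueness there forces $G(\boldsymbol{\lambda})=G_e(\boldsymbol{\lambda},\mathbf{s})$. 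Hence $G_e(\boldsymbol{\lambda},\mathbf{s})\in V_e(\mathbf{s})$ whenever $\boldsymbol{\lambda}\in B_e(\mathbf{s})$, so $\{G_e(\boldsymbol{\lambda},\mathbf{s})\mid\boldsymbol{\lambda}\in B_e(\mathbf{s})\}$ is exactly the canonical basis of $V_e(\mathbf{s})$. For the converse inclusion, suppose $G_e(\boldsymbol{\mu},\mathbf{s})\in V_e(\mathbf{s})$ for some $\boldsymbol{\mu}$; then $G_e(\boldsymbol{\mu},\mathbf{s})\in\mathcal{L}_{\mathbb{Z}}\cap V_e(\mathbf{s})\subseteq\mathcal{L}\cap V_e(\mathbf{s})$, so its class in $\mathcal{L}/v\mathcal{L}$, namely $\boldsymbol{\mu}\bmod v\mathcal{L}$, lies in the span of $\{\boldsymbol{\lambda}\bmod v\mathcal{L}\mid\boldsymbol{\lambda}\in B_e(\mathbf{s})\}$ by (ii), and linear independence of the classes $\{\boldsymbol{\nu}\bmod v\mathcal{L}\mid\boldsymbol{\nu}\in\Pi_l\}$ gives $\boldsymbol{\mu}\in B_e(\mathbf{s})$. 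Therefore $\mathcal{G}_e^\circ(\mathbf{s})=\{G_e(\boldsymbol{\lambda},\mathbf{s})\mid\boldsymbol{\lambda}\in B_e(\mathbf{s})\}$, which is the canonical basis of $V_e(\mathbf{s})$ --- precisely the two assertions. The main obstacle is item (iii) together with the careful verification in (i) and (ii) that the restrictions to $V_e(\mathbf{s})$ of the Fock-space bar involution, crystal lattice, and $\mathbb{Z}[v,v^{-1}]$-form coincide with the intrinsic data feeding the Kashiwara--Lusztig characterization; this is routine in global basis theory but dependent on the right references, and in the $e=\infty$ case needs the remark that integrability keeps all the sums involved locally finite.
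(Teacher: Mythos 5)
This theorem is not proved in the paper; it is quoted from Uglov~\cite{U}, so there is no internal proof to compare against. Your reconstruction is, however, essentially correct and follows the standard Kashiwara--Lusztig comparison template: identify the ambient crystal lattice and bar involution of $\mathcal{F}_e^{\mathbf{s}}$ restricted to $V_e(\mathbf{s})$ with the intrinsic ones (using that $V_e(\mathbf{s})=\mathcal{U}_v(\mathfrak{g}).\boldsymbol{\emptyset}$, that $\overline{u.\boldsymbol{\emptyset}}=\bar u.\boldsymbol{\emptyset}$, and that crystal bases of an integrable module split along a direct-sum decomposition), place the global basis vectors in $\mathcal{L}_{\mathbb{Z}}$ via the $\mathbb{Z}[v,v^{-1}]$-form coming from divided powers together with $\mathbb{A}(v)\cap\mathbb{Z}[v,v^{-1}]=\mathbb{Z}[v]$, and then invoke the uniqueness in Theorem~\ref{TH_Bc_Fock}; the converse direction via reduction modulo $v\mathcal{L}$ and linear independence of the classes $\boldsymbol{\nu}\bmod v\mathcal{L}$ is also sound, since $v\mathcal{L}\cap V_e(\mathbf{s})=v(\mathcal{L}\cap V_e(\mathbf{s}))$. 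This route is genuinely different from Uglov's own, which constructs the Fock-space canonical basis through the semi-infinite wedge realization, identifies it combinatorially, and deduces the comparison from that machinery; your argument is the cleaner abstract deduction once both canonical bases are known to exist, at the cost of relying on the decomposition theorem for crystal bases and on the integral-form argument of item~(iii), which you rightly flag as the point that needs the most care (in particular for $e=\infty$, where local finiteness of the $\mathcal{U}_v(\mathfrak{sl}_\infty)$-action must be checked so that the divided-power form makes sense weight by weight).
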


\section{Compatibility of canonical bases}

In this section, we prove that each $G_{e}(\boldsymbol{\lambda},\mathbf{s})$
may be expanded into $\mathbb{Z}[v]$-linear combination of the canonical basis
$\mathcal{G}_{\infty}(\mathbf{s})$. A crucial observation for the proof is
that we may define a partial order on multipartitions which is independent of
$e$. Then, the transition matrix becomes unitriangular with respect to the
partial order.

\subsection{Some combinatorial preliminaries}

\label{subsec_bij}

A \emph{$1$-runner abacus} is a subset $A$ of $\mathbb{Z}$ such that $-k\in A
$ and $k\notin A$ for all large enough $k\in\mathbb{N}$. To visualize a
$1$-runner abacus, we view $\mathbb{Z}$ as a horizontal runner and place a
bead on the $k$-th position, for each $k\in A$. Thus, the runner is full of
beads on the far left and has no beads on the far right. For $l\geq1$, an
\emph{$l $-runner abacus} is an $l$-tuple of $1$-runner abaci. Let
$\mathcal{A}^{l}$ be the set of $l$-runner abaci. To each pair of an
$l$-partition ${\boldsymbol{\lambda}}=(\lambda^{(1)},\ldots,\lambda^{(l)})$
and a multicharge $\mathbf{s}=(s_{1},\ldots,s_{l} )\in\mathbb{Z}^{l},$ we
associate the $l$-runner abacus
\[
a(\boldsymbol{\lambda},\mathbf{s}):=\{{(\lambda_{i}^{(d)}+s_{d}+1-i,d) \mid
i\geq1,1\leq d\leq l\}},
\]
which is a subset of $\mathbb{Z}\times\lbrack1,l]$. One checks easily that the
map
\[
(\boldsymbol{\lambda},\mathbf{s})\in\Pi_{l}\times\mathbb{Z}^{l}\mapsto
a(\boldsymbol{\lambda},\mathbf{s})\in\mathcal{A}^{l}%
\]
is bijective. To describe the embedding of Fock spaces into the space of
semi-infinite wedge products and then cut semi-infinite wedge products to
finite wedge products, we need to introduce a bijective map $\tau_{l}%
:\Pi\times\mathbb{Z} \cong\mathcal{A}\rightarrow\mathcal{A}^{l}\cong\Pi
_{l}\times\mathbb{Z}^{l}.$

\begin{definition}
\label{definition_tau_l} Let $\tau_{l}:\mathbb{Z}\rightarrow\mathbb{Z}%
\times\lbrack1,l]$ be the bijective map defined by
\[
k\mapsto(\phi(k),d(k)),
\]
where $k=c(k)+e(d(k)-1)+elm(k)$ such that
\[
c(k)\in\lbrack1,e],\;\;d(k)\in\lbrack1,l]\;\text{ and }\;m(k)\in\mathbb{Z},
\]
and $\phi(k)=c(k)+em(k).$ Then, we define $\tau_{l}:\Pi\times\mathbb{Z}%
\cong\mathcal{A}\rightarrow\mathcal{A}^{l}\cong\Pi_{l}\times\mathbb{Z}^{l}$
by
\[
A\mapsto\tau_{l}(A)=\{(\phi(k),d(k))\mid k\in A\}\in\mathcal{A}^{l},
\]
for $A\in\mathcal{A}$.
\end{definition}

\begin{remark}
\ 

\begin{enumerate}
\item  If $(\boldsymbol{\lambda},\mathbf{s})=\tau_{l}(\lambda,s)$ then
$s=s_{1}+\cdots+s_{l}$.

\item  To read off the multicharge $\mathbf{s}=(s_{1},\ldots,s_{l})$ from the
$l$-runner abacus, we proceed as follows: if the left adjacent position of a
bead on a runner is vacant, we move the bead to the left to occupy the vacant
position, and we repeat this procedure as many times as possible. Then,
$s_{d}$ is the column number of the rightmost bead of the $d$-th runner.
\end{enumerate}
\end{remark}

\begin{example}
Let $e=2$ and $l=3$. Then the preimage of
\[
(\boldsymbol{\lambda},\mathbf{s})=(((1.1),(1.1),(1)),(0,0,-1))
\]
is $(\lambda,s)=((4.3.3.2.1),-1).$
\end{example}

\vspace*{5mm} \begin{figure}[ptbh]
\begin{center}
\includegraphics[height=4.5cm]{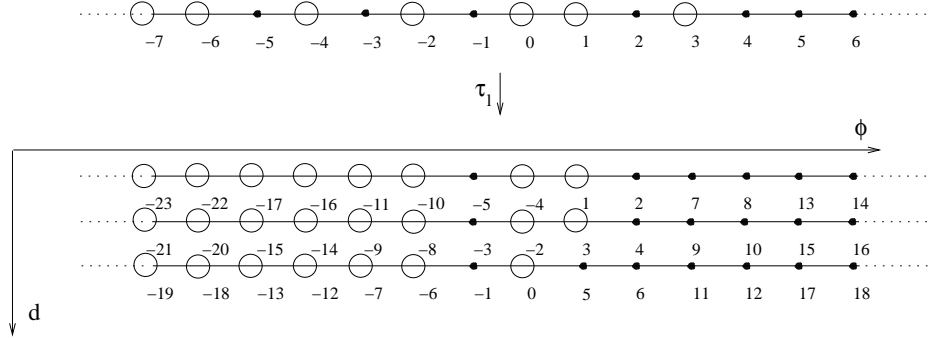}
\end{center}
\caption{Computation of the bijection $\tau_{l}$ using abaci.}%
\label{fig:1}%
\end{figure}

Now, $(\lambda,s)=\tau_{l}^{-1}(\boldsymbol{\lambda},\mathbf{s})$ has the
$1$-runner abacus
\[
a(\lambda,s)=\{(k_{i}:=\lambda_{i}+s+1-i) \mid i\geq1\},
\]
and the semi-infinite sequence $(k_{1},k_{2},\dots)$ defines a semi-infinite
wedge product.

We fix a sufficiently large $r$ such that
\begin{equation}
\lambda_{i}=0,\text{ for }i\geq r. \label{condkr}%
\end{equation}
Then $(\lambda,s)$ is determined by the finite sequence $\mathbf{k}%
:=(k_{1},\dots,k_{r}).$ For example, $((4.3.3.2.1),-1)$ is determined by
$\mathbf{k}=(3,1,0,-2,-4,-6,-7).$ We write $\mathbf{k}=\tau_{l}^{-1}%
(\boldsymbol{\lambda},\mathbf{s})$ by abuse of notation. Then they give the
wedge basis in the space of finite wedge products $\Lambda^{r}$, which will be
introduced in a different guise in \S\ref{Expression in KL}.

We read the beads $\tau_{l}(k_{1}),\dots,\tau_{l}(k_{r})$ on the $l$-runner
abacus $a(\boldsymbol{\lambda},\mathbf{s})$ from right to left, starting with
the $l$-th runner, and obtain a permutation $\mathrm{w}(\mathbf{k}%
)=(w_{1},\ldots,w_{r})$ of $\mathbf{k}.$ In our example, we have
$\mathrm{w}(\mathbf{k})=(0,-6,-7,3,-2,1,-4).$

\begin{definition}
Let $\tau_{l}(w_{i})=(\zeta_{i},b_{i})$, for $1\leq i\leq r$, that is,
$\zeta_{i}$ and $b_{i}$ are the column number and the row number of the bead
$\tau_{l}(w_{i})$ on the $l$-runner abacus $a(\boldsymbol{\lambda}%
,\mathbf{s})$, respectively. Then, we define
\[
\zeta(\boldsymbol{\lambda})=(\zeta_{1},\ldots,\zeta_{r})\;\text{ and
}\;b(\boldsymbol{\lambda})=(b_{1},\ldots,b_{r}).
\]
\end{definition}

\begin{example}
In our example, we have
\[
\zeta(\boldsymbol{\lambda})=(0,-2,-3,1,0,1,0)\;\text{ and }\;b(\boldsymbol
{\lambda})=(3,3,3,2,2,1,1).
\]
\end{example}

We will need the $\zeta(\boldsymbol{\lambda})$ and $b(\boldsymbol{\lambda})$
when we express $\Delta_{{\boldsymbol{\lambda}},{\boldsymbol{\mu}}}^{e}(v)$ in
Kazhdan-Lusztig polynomials. In this respect, the following remark is important.

\begin{remark}
\label{rq_inde} Suppose that we have fixed $\boldsymbol{\lambda}$ and
$\mathbf{s}$. Assume $e$ and $e^{\prime}$ are two positive integers. Then
$\mathbf{k}=\tau_{l}^{-1}(\boldsymbol{\lambda},\mathbf{s})$ does not coincide
in general for distinct $e$ and $e^{\prime}$. Nevertheless, one can choose $r$
such that $\zeta(\boldsymbol{\lambda})$ and $b(\boldsymbol{\lambda
})$ for $e$ \emph{coincide} with those for $e^{\prime}$. For this to hold, it suffices that 
the $r$ beads are the same for $e$ and $e^{\prime}$. Thus, it suffices to choose $r$ as in
(\ref{condkr}) such that $1-k_r$ is divisible by $e$ and $e^{\prime}$. 
If we divide the $l$-runner abacus into \emph{cells} with
height $l$ and width $e$ (resp. $e^{\prime}$) so that the initial cell
contains exactly the locations labelled by $1,2\ldots,el$ (resp.
$1,2\ldots,e^{\prime}l$), 
it says that the finite sequence ends at the upper-left corner of a far left cell for 
both $e$ and $e^{\prime}$. In our running example, if we want to make
$\zeta(\boldsymbol{\lambda})$ and $b(\boldsymbol{\lambda
})$ coincide for $e=2$ and $e^{\prime}=3,$ we read all the beads with labels
greater or equal to $-17$ in Figure \ref{fig:1}.
\end{remark}

Let $P=\mathbb{Z}^{r}$ and $W$ the affine symmetric group which is the
semidirect product of the symmetric group $S_{r}$ and the normal subgroup $P.
$ $W$ acts on $\beta=(\beta_{1},\ldots,\beta_{r})\in P$ on the right by
\begin{align}
\beta\cdot s_{i}  &  =(\beta_{1}\cdots,\beta_{i+1},\beta_{i},\ldots,\beta
_{r}),\;\text{for $1\leq i\leq r-1,$ and}\label{right_action}\\
\beta\cdot\mu &  =\beta+e\mu,\;\text{for $\mu\in P$.}\nonumber
\end{align}
Then
\[
A^{r}=\{a=(a_{1},\ldots,a_{r})\in P\mid1\leq a_{1}\leq\cdots\leq\cdots
a_{r}\leq e\}
\]
is a fundamental domain for the action. We denote the stabilizer of $a\in
A^{r}$ by $_{a}\!W$. It is clear that $_{a}\!W$ is a subgroup of $S_{r}$. Let
$w_{a}$ be the maximal element of $_{a}\!W.$ We denote by $^{a}W$ and $^{a}S_{r}
$ the set of minimal length coset representatives in $_{a}\!W\backslash W$ and
$_{a}\!W\backslash S_{r}$, respectively.

In a similar manner, $W$ acts on $\beta=(\beta_{1},\ldots,\beta_{r})\in P$ on
the left by
\begin{align*}
s_{i}\cdot\beta &  =(\beta_{1}\cdots,\beta_{i+1},\beta_{i},\ldots,\beta
_{r}),\; \text{for $1\leq i\leq r-1$, and}\\
\mu\cdot\beta &  =\beta+l\mu,\;\text{for $\mu\in P$.}%
\end{align*}
Then
\[
B^{r}=\{b=(b_{1},\ldots,b_{r})\in P\mid l\geq b_{1}\geq\cdots\geq\cdots
b_{r}\geq1\}
\]
is a fundamental domain for the action. We denote the stabilizer of $b\in
B^{r}$ by $W_{b}$, its maximal element by $w_{b},$ and the set of minimal
length coset representatives in $W/W_{b}$ and $S_{r}/W_{b}$ by $W^{b}$ and
$S_{r}^{b}$, respectively.

\medskip

Write $k=c(k)+e(d(k)-1)+elm(k)$ and $\phi(k)=c(k)+em(k)$, for $k\in\mathbb{Z}
$, as before, and define
\begin{align*}
c(\mathbf{k})  &  =(c(k_{1}),\ldots,c(k_{r})),\\
d(\mathbf{k})  &  =(d(k_{1}),\ldots,d(k_{r})),\\
m(\mathbf{k})  &  =(m(k_{1}),\ldots,m(k_{r})),\\
\phi(\mathbf{k})  &  =(\phi(k_{1}),\ldots,\phi(k_{r})),
\end{align*}
for $\mathbf{k}=\tau_{l}^{-1}(\boldsymbol{\lambda},\mathbf{s})\in
\mathbb{Z}^{r}.$ Then,

\begin{itemize}
\item  there exist $a(\mathbf{k})\in A^{r}$ and $u(\mathbf{k})\in
{^{a(\mathbf{k})}}S_{r}$ such that $c(\mathbf{k})=a(\mathbf{k})\cdot
u(\mathbf{k}).$

\item  there exist $b(\mathbf{k})\in B^{r}$ and $v(\mathbf{k})\in
S_{r}^{b(\mathbf{k})}$ such that $d(\mathbf{k})=v(\mathbf{k})\cdot
b(\mathbf{k}).$
\end{itemize}

It is clear that $b(\mathbf{k})=b(\boldsymbol{\lambda}).$ We define
$\zeta(\mathbf{k}):=\phi(\mathbf{k})\cdot v(\mathbf{k}).$ Then, comparing it
with
\[
b(\mathbf{k})=v(\mathbf{k})^{-1}\cdot d(\mathbf{k})=d(\mathbf{k})\cdot
v(\mathbf{k}),
\]
we have $\zeta(\mathbf{k})=\zeta(\boldsymbol{\lambda})$. In the sequel, 
we will use the notation $b(\boldsymbol{\lambda})$ and $\zeta(\boldsymbol{\lambda})$. 
From the definitions, we have
\[
\zeta(\boldsymbol{\lambda})=a(\mathbf{k})\cdot u(\mathbf{k})v(\mathbf{k})+
e(m(\mathbf{k})\cdot v(\mathbf{k})),
\]
which shows that $\zeta(\boldsymbol{\lambda})$ belongs to $a(\mathbf{k})W$.

\begin{example}
With $\mathbf{k}=(3,1,0,-2,-4,-6,-7)$, $e=2$ and $l=3,$ we obtain
\begin{align*}
c(\mathbf{k})  &  =(1,1,2,2,2,2,1),\\
d(\mathbf{k})  &  =(2,1,3,2,1,3,3),\\
m(\mathbf{k})  &  =(0,0,-1,-1,-1,-2,-2),\\
\phi(\mathbf{k})  &  =(1,1,0,0,0,-2,-3),\\
a(\mathbf{k})  &  =(1,1,1,2,2,2,2),\\
b(\mathbf{k})  &  =b(\boldsymbol{\lambda})=(3,3,3,2,2,1,1),\\
\zeta(\mathbf{k})  &  =\zeta(\boldsymbol{\lambda})=(0,-2,-3,1,0,1,0).
\end{align*}
\end{example}

\subsection{Ordering multipartitions}

Now we introduce the dominance order in a general setting. Let $k\in
\mathbb{N}$ and $\mathbf{u}=(u_{1},\ldots,u_{k})\in\mathbb{Q}^{k}%
,\mathbf{v}=(v_{1},\ldots,v_{k})\in\mathbb{Q}^{k}$. Then, we write
$\mathbf{u}\rhd\mathbf{v}$ if $\mathbf{u}\neq\mathbf{v}$ and
\[
\sum_{s=1}^{a}u_{s}\geq\sum_{s=1}^{a}v_{s},\text{ for }a=1,\ldots,k.
\]

We fix a decreasing sequence $1>\alpha_{1}>\alpha_{2}>\ldots>\alpha_{l}>0$ of
rational numbers. Then, for each ${\boldsymbol{\lambda}}\in\Pi_{l,n}$, we read
the rational numbers
\[
\lambda_{j}^{(i)}-j+s_{i}-\alpha_{i},\;\text{for $j=1,\ldots,n+s_{i}$ and
$i=1,\ldots,l$,}%
\]
in decreasing order and denote the resulting sequence by $\gamma
({\boldsymbol{\lambda}})\in\mathbb{Q}^{k}$ where $k=\sum_{i=1}^{l} s_{i}+nl$.
Note that one can recover ${\boldsymbol{\lambda}}$ from 
$\gamma({\boldsymbol{\lambda}})=(\gamma_{1},\ldots,\gamma_{k})$. Hence, 
if $\gamma({\boldsymbol{\lambda}})=\gamma({\boldsymbol{\mu}})$ then ${\boldsymbol
{\lambda}}={\boldsymbol{\mu}}$. 
This follows from the fact that for all $i\in [1,l]$, the set 
$$\{ \gamma_k-s_{i}+\alpha_{i}\ |\ \gamma_k-[\gamma_k]=\alpha_i\}$$
is the set of $\beta$-numbers of $\lambda^i$. 


\begin{definition}
Let $\boldsymbol{\lambda},\boldsymbol{\mu}\in\Pi_{l,n}.$ Then we write
${\boldsymbol{\lambda}}\succ{\boldsymbol{\mu}}$ if $\gamma({\boldsymbol
{\lambda}})\rhd\gamma({\boldsymbol{\mu}}).$
\end{definition}

One can check that this defines a partial order which depends on the choice of
$\alpha$ but does not depend on $e$. This is a crucial remark in view of the
following result.

\begin{theorem}
\ \label{TH_Uglov}

\begin{enumerate}
\item  For each ${\boldsymbol{\lambda}}\in\Pi_{l,n}$, there exist polynomials
$\Delta_{{\boldsymbol{\lambda}},{\boldsymbol{\mu}}}^{e}(v)\in\mathbb{Z}[v]$,
for ${\boldsymbol{\mu}}\in\Pi_{l,n}$, such that we have the unitriangular
expansion
\[
G_{e}({\boldsymbol{\lambda},}\mathbf{s})={\boldsymbol{\lambda}+}%
\sum_{\boldsymbol{\lambda}\succ\boldsymbol{\mu}}\Delta_{{\boldsymbol{\lambda}%
},{\boldsymbol{\mu}}}^{e}(v){\boldsymbol{\mu}}.
\]

\item  For each ${\boldsymbol{\lambda}}\in\Pi_{l,n}$, there exist polynomials
$\Delta_{{\boldsymbol{\lambda}},{\boldsymbol{\mu}}}^{\infty}(v)\in
\mathbb{Z}[v]$, for ${\boldsymbol{\mu}}\in\Pi_{l,n}$, such that we have the
unitriangular expansion
\[
G_{\infty}({\boldsymbol{\lambda},}\mathbf{s})={\boldsymbol{\lambda}+}%
\sum_{{\boldsymbol{\lambda}}\succ{\boldsymbol{\mu}}}\Delta_{{\boldsymbol
{\lambda}},{\boldsymbol{\mu}}}^{\infty}(v){\boldsymbol{\mu}}.
\]

\item  For each pair $({\boldsymbol{\lambda}},{\boldsymbol{\mu}})\in\Pi
_{l,n}\times\Pi_{l,n}$, $\Delta_{{\boldsymbol{\lambda}},{\boldsymbol{\mu}}%
}^{e}(v)$ and $\Delta_{{\boldsymbol{\lambda}},{\boldsymbol{\mu}}}^{\infty}(v)$
are expressed by certain parabolic Kazhdan-Lusztig polynomials (see Section
\ref{Sec_pos}). In particular, they are polynomials with nonnegative integer coefficients.
\end{enumerate}
\end{theorem}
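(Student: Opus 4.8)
\emph{Sketch of the intended proof.}
Parts (1) and (2) amount to nothing more than the unitriangularity of the transition matrices with respect to the $e$-independent order $\succ$: the existence and uniqueness of $G_e(\boldsymbol{\lambda},\mathbf{s})$, the normalisation $\Delta^e_{\boldsymbol{\lambda},\boldsymbol{\lambda}}(v)=1$, and the fact that $\Delta^e_{\boldsymbol{\lambda},\boldsymbol{\mu}}(v)\in v\mathbb{Z}[v]$ for $\boldsymbol{\mu}\neq\boldsymbol{\lambda}$ are already contained in Theorem \ref{TH_Bc_Fock}, and the expansion is a finite sum because the $\mathcal{U}_v(\mathfrak{g})$-action preserves each $\Pi_{l,n}$. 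So the only thing to prove is that $\Delta^e_{\boldsymbol{\lambda},\boldsymbol{\mu}}(v)=0$ unless $\boldsymbol{\lambda}\succeq\boldsymbol{\mu}$. By the usual mechanism extracting a canonical basis from a bar involution, it suffices to show that Uglov's bar involution is itself unitriangular for $\succ$: writing $\overline{\boldsymbol{\lambda}}=\sum_{\boldsymbol{\mu}\in\Pi_{l,n}}R^e_{\boldsymbol{\lambda},\boldsymbol{\mu}}(v)\,\boldsymbol{\mu}$, one has $R^e_{\boldsymbol{\lambda},\boldsymbol{\lambda}}=1$ and $R^e_{\boldsymbol{\lambda},\boldsymbol{\mu}}=0$ unless $\boldsymbol{\lambda}\succeq\boldsymbol{\mu}$, and --- this being the essential point over \cite{U} --- the \emph{same} order $\succ$ works for all $e\in\mathbb{Z}_{\geq2}\cup\{\infty\}$.

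The plan for this is to work inside the wedge realisation of \S\ref{subsec_bij}. Fixing a weight space and passing through $\tau_l$, I would identify $\boldsymbol{\lambda}$ with the basis vector $v_{\mathbf{k}}$, $\mathbf{k}=\tau_l^{-1}(\boldsymbol{\lambda},\mathbf{s})$, of the finite wedge space $\Lambda^r$ acted on by an affine Hecke algebra of type $A$, Uglov's bar involution being the one induced from this action. Then $\overline{v_{\mathbf{k}}}$ is computed by iterating the straightening relations $u_a\wedge u_b=-\,u_b\wedge u_a+(\text{lower order terms})$, each step replacing $\mathbf{k}$ by the tuple obtained by transposing two entries and possibly moving them towards one another by a multiple of $e$ (and, on the dual side, of $l$). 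The crucial --- and, I expect, most delicate --- step is to translate one elementary straightening, through the dictionary $\mathbf{k}\leftrightarrow(\boldsymbol{\lambda},\mathbf{s})$ and then through $\boldsymbol{\lambda}\mapsto\gamma(\boldsymbol{\lambda})$, into a purely order-theoretic statement about $\gamma(\boldsymbol{\lambda})$: that it amounts to replacing a pair of entries $\gamma_s\ge\gamma_t$ by a pair $\gamma'_s\le\gamma_s$, $\gamma'_t\ge\gamma_t$ with $\gamma'_s+\gamma'_t=\gamma_s+\gamma_t$, followed by re-sorting. Any such operation weakly decreases every partial sum and strictly decreases one of them, hence yields $\gamma(\boldsymbol{\lambda})\rhd\gamma(\boldsymbol{\mu})$, i.e. $\boldsymbol{\lambda}\succ\boldsymbol{\mu}$; and since dominance of the $\gamma$-sequences makes no reference to $e$, the bound is uniform in $e$. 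One must also check that the perturbation $1>\alpha_1>\cdots>\alpha_l>0$ --- whose only role is to render $\gamma$ injective and to break ties among equal contents consistently with $\prec_{\mathbf{s}}$ --- is respected by these moves; the case $e=\infty$ needs no separate treatment, its straightening relations being a subset of those for finite $e$. (Equivalently one may deduce the bound from Uglov's triangularity for his $e$-dependent order by proving the implication $\boldsymbol{\mu}\preceq_{\mathrm{Uglov},e}\boldsymbol{\lambda}\Rightarrow\boldsymbol{\mu}\preceq\boldsymbol{\lambda}$; this is the same combinatorics viewed differently.)

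For (3), with the weight again fixed, I would encode $\boldsymbol{\lambda}$ by the coset data $(a(\mathbf{k}),u(\mathbf{k}))$ and $(v(\mathbf{k}),b(\mathbf{k}))$ of \S\ref{subsec_bij}, i.e. by an element of a parabolic (double) coset space for the pair of parabolic subgroups ${}_{a}W\le S_r$ and $W_b\le S_r$ of the affine symmetric group $W$; the sequences $\zeta(\boldsymbol{\lambda})$ and $b(\boldsymbol{\lambda})$ are exactly the combinatorial coordinates of that element, and Remark \ref{rq_inde} ensures they may be chosen independent of $e$. Matching $\Lambda^r$ in this weight with the corresponding parabolic module of the affine Hecke algebra, together with the bar involutions, turns $\mathcal{G}_e(\mathbf{s})$ into a parabolic Kazhdan--Lusztig basis, so that each $\Delta^e_{\boldsymbol{\lambda},\boldsymbol{\mu}}(v)$ becomes, up to a monomial, a parabolic Kazhdan--Lusztig polynomial --- of the $q=0$ type along $S_r$ and of Grojnowski--Haiman type \cite{GH} along the affine direction. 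The exposition here is meant to be lighter than \cite{U} in that one only needs this identification for a single pair $(\boldsymbol{\lambda},\boldsymbol{\mu})$, not a global module isomorphism. Positivity then follows because such a parabolic Kazhdan--Lusztig polynomial is obtained by expanding a product of a Kazhdan--Lusztig basis element of the affine Hecke algebra with a Grojnowski--Haiman basis element in the Kazhdan--Lusztig basis, and the structure constants of the affine Hecke algebra of type $A$ for these bases lie in $\mathbb{N}[v]$; hence $\Delta^e_{\boldsymbol{\lambda},\boldsymbol{\mu}}(v)\in\mathbb{N}[v]$. These last verifications are what Section \ref{Sec_pos} carries out in detail.
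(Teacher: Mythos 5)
Your sketch for parts (1) and (2) is aligned with the paper's argument: both reduce to unitriangularity of Uglov's bar-involution for the $e$-independent order $\succ$, and both obtain this by analysing one elementary straightening of the semi-infinite wedge. The paper formalises the step by showing that the two straightened $l$-partitions are obtained from a common $\boldsymbol{\nu}$ by adding a ribbon of fixed size $m$ at $\beta$-numbers $\beta_a>\beta_b$, whence $\gamma(\boldsymbol{\lambda})\rhd\gamma(\boldsymbol{\lambda}')$; your ``pull two entries of $\gamma$ towards one another, preserving their sum, then re-sort'' picture is an equivalent rephrasing of that move. The essential new point --- that the bound is uniform in $e$ because $\succ$ is --- is correctly identified, as is the observation that the details are delegated to the careful study of straightening rules carried out in \cite{J2}.

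For part (3), however, there is a genuine gap in your positivity mechanism. The coefficients $\Delta^{e}_{\boldsymbol{\lambda},\boldsymbol{\mu}}(v)$ are, up to a monomial, the Deodhar parabolic Kazhdan--Lusztig polynomials
\[
P^{J,-1}_{w_a x(\eta)w_b,\,w_a x(\xi)w_b}(v^{-2})
=\sum_{u\in W_b}(-1)^{\ell(u)}\,P_{w_a x(\eta)w_b u,\,w_a x(\xi)w_b}(v^{-2}),
\]
an \emph{alternating} sum. Its non-negativity is not a consequence of positivity of ordinary KL polynomials, nor of positivity of the structure constants of the Kazhdan--Lusztig or Grojnowski--Haiman bases; the paper quotes Kashiwara--Tanisaki \cite{KT} (a geometric theorem about Schubert varieties in the affine partial flag variety) for exactly this. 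The Grojnowski--Haiman positivity is used in Section \ref{Sec_pos} for a \emph{different} matrix: the relative matrix $\Delta_{\infty}^{e}(v)=(d_{\boldsymbol{\lambda},\boldsymbol{\nu}}(v))$ of Theorem \ref{Th_main}, whose entries $d'_{\gamma w_b,\xi}(v)$ are expressed there as sums of products of the non-negative coefficients $A_{y,w}(v)$ (expansion in the $U$-basis, positive by \cite{GH}) and $B_{y,z}(v)$ (genuine KL structure constants). Your argument would therefore establish positivity of $\Delta_{\infty}^{e}(v)$, not of $\Delta_e(v)$ itself; for $\Delta^{e}_{\boldsymbol{\lambda},\boldsymbol{\mu}}(v)$ you must invoke the parabolic KL positivity theorem.
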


\begin{proof}
We prove $(1)$ and $(2)$ by the arguments which are similar to those used
in \cite{J2}.
As in \cite{U}, it suffices to show that the matrix of the bar-involution is
unitriangular with respect to  $\succ$.  Then the results
immediately follow from the characterization of the canonical basis.
We recall the bar-involution on the space $\bigwedge^{s+\infty/2} V_{e,l}$,
which is defined in \cite{U}, where $s=s_1+\ldots+ s_l$. The space
$\bigwedge^{s+\infty/2} V_{e,l}$ is
the $\mathbb{Q}(v)$-vector space spanned by the semi-infinite monomials
$$u_{\bf k}=u_{k_1}\wedge u_{k_2} \wedge\ldots, $$
where $k_i\in\mathbb{Z}$, for all $i\geq1$, and $k_i=s-i+1$ if $i>>0$.
Its basis is given by the \emph{ordered} monomials (i.e. the monomials with
decreasing indices $k_1>k_2 >\ldots$) because any monomial may be
expressed as
a linear combination of ordered monomials by ``straightening relations''
in \cite[Prop. 3.16]{U}.
Now, the procedure in \S\ref{subsec_bij} yields a bijection $\tau
_l$ from the set
of ordered
monomials to the set of pairs $(\ulambda,{\bf s})$ such that $\ulambda\in
\Pi_{l,n}$ and
${\bf s}=(s_1,\ldots,s_l)$ with $s=s_1+\ldots+ s_l$. This allows us
to identify the
space $\bigwedge^{s+\infty/2} V_{e,l}$ with
$\bigoplus_{s_1+\ldots+s_l=s} \mathcal{F}^{\bf s}_e$.
Let  $u_{\bf k}$ be a semi-infinite (possibly non ordered) monomial. Let
$u_{\widetilde{{\bf k}}}$  be the monomial obtained from
$u_{\bf k}$  by reordering the $k_i$'s in strictly decreasing order.
The bijection $\tau_l$
then allows us to associate a pair $(\ulambda,{\bf s})$ with
$u_{\widetilde{{\bf k}}}$  such that $\ulambda\in\Pi_{l,n}$ and ${\bf
s}=(s_1,\ldots,s_l)$. We define a map $\pi$ on the set of semi-infinite monimials by
$$\pi(u_{\bf k})=(\ulambda,{\bf s}).$$
In particular, $\tau_l$ and $\pi$ coincide on the set of ordered monomials.
Uglov defined a bar-involution
on $\bigwedge^{s+\infty/2} V_{e,l}$ as follows : for all semi-infinite
ordered monomials
$u_{\bf k}$, we define
$$\overline{u_{\bf k}}:=v^t u_{k_r}\wedge u_{k_{r-1}}\wedge\ldots
\wedge u_{k_1} \wedge
u_{k_{r+1}}\wedge u_{k_{r+2}}\wedge\ldots$$
where $t$ is a certain integer (see \cite[\S3.4]{U} for its explicit
definition) and $r$
is a sufficiently large integer. Hence, to compute $\overline{\ulambda}$ in
$\mathcal{F}^{\bf s}_e$, we set $u_{\bf k}=\tau_l^{-1} (\ulambda,{\bf s})$
and use the
straightening relations to expand $\overline{u_{\bf k}}$
on the basis of the ordered monomials, and apply $\pi$ to obtain
the expression of $\overline{\ulambda}$ as a linear combination of
$l$-partitions.
We note that $\ulambda$ appears with coefficient $1$ by \cite[Rk. 3.24]{U}.
Let $u_{\bf p}$ be an arbitrary semi-infinite monomial and assume that
this is non ordered. Then there exists $i\in\mathbb{N}$
such that $k_i<k_{i+1}$. The straightening relations then show how to
express $u_{\bf p}$ in terms of
semi-infinite monomials $u_{{\bf p}'}$ with $p_i '>p_{i+1}'$.   Let us
denote $\pi(u_{\bf p})= (\ulambda,{\bf s})$
and $\pi(u_{{\bf p}'})= (\ulambda',{\bf s}')$.
A study of the straightening relations shows that we have  ${\bf
s}={\bf s}'$ and
that $\ulambda$ and
$\ulambda'$ are both obtained from the same $l$-partition
$\boldsymbol{\nu}$ by
adding a ribbon of fixed size $m$ (see \cite[\S4.2]{J2}).
We consider the set :
\[\{\beta_1,\ldots, \beta_h\}:=\left\{
\nu_{j}^{(i)}-j+s_{i}-\alpha_i,\;\text{for $j=1,\ldots,n+s_i$ and
$i=1,\ldots,l$}\right\}
\]
Then    there exists $a$ and $b$ such that   $\gamma(\ulambda)$ is the
sequence obtained
by reordering the elements  of
$\{\beta_1,\ldots, \beta_h\}\setminus\{\beta_a\} \cup\{\beta_a+m\}$
in decreasing
order and  $\gamma(\ulambda')$ is the sequence obtained by reordering the
elements      $\{\beta_1,\ldots, \beta_h\}\setminus\{\beta_b\} \cup
\{\beta_b+m\}$
in decreasing
order. Then, mimicking the argument in \cite[p.581-583]{J2}, one can prove
by a careful study of the straightening rules  that :
$$\beta_a >\beta_b.$$
This implies that $\ulambda\succ\ulambda'$. In particular, all the
ordered
monomials          $u_{{\bf k}'}$ which appear in the expansion of
$\overline{u_{\bf k}}$ satisfy
the following property : if
$ \pi(u_{\bf k}')= (\ulambda',{\bf s})$         then $\ulambda\succ
\ulambda'$. This proves $(1)$ and $(2)$.  The third part is a result of Uglov
\cite{U}. Uglov proved
that the coefficients $\Delta_{{\boldsymbol
{\lambda}},{\boldsymbol{\mu}}}^{e}(v)$ are expressed by parabolic
Kazhdan-Lusztig polynomials as we will see in Section \ref{Sec_pos}. By
results of
Kashiwara and Tanisaki \cite{KT}, this implies that they have nonnegative
integer coefficients.
\end{proof} 

\begin{remark}
The order $\succ$ does not coincide with the partial order used by Uglov in
\cite{U}. His partial order depends on $e$, so that he could not use a common
partial order in the statements (1) and (2) of Theorem \ref{TH_Uglov}. On the
other hand, we have used the common partial order $\succ$ there.
\end{remark}

As a direct consequence, we have the following theorem :

\begin{theorem}
\label{Th_main}For each ${\boldsymbol{\lambda}}\in\Pi_{l}$, we may expand
$G_{e}(\boldsymbol{\lambda},\mathbf{s})$ as follows.
\begin{equation}
G_{e}({\boldsymbol{\lambda},\mathbf{s})=}\sum_{{\boldsymbol{\nu}\in}\Pi_{l}%
}d_{{\boldsymbol{\lambda},\boldsymbol{\nu}}}(v)G_{\infty}({\boldsymbol{\nu
},\mathbf{s}}) \label{main_dec2}%
\end{equation}
where :

\begin{itemize}
\item $d_{{\boldsymbol{\lambda}},{\boldsymbol{\lambda}}}(v)=1$,

\item $d_{{\boldsymbol{\lambda}},{\boldsymbol{\nu}}}(v)\in v\mathbb{Z}%
[v]\text{ if }{\boldsymbol{\lambda}}\neq{\boldsymbol{\nu}}$

\item $d_{{\boldsymbol{\lambda}},{\boldsymbol{\nu}}}(v)\neq0$ only if
${\boldsymbol{\lambda}}\succeq{\boldsymbol{\nu}}$.
\end{itemize}
\end{theorem}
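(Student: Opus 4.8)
The plan is to deduce Theorem~\ref{Th_main} as a purely formal consequence of Theorem~\ref{TH_Uglov}, by inverting a unitriangular transition matrix. First I would record that both canonical bases respect the grading by rank: by parts (1) and (2) of Theorem~\ref{TH_Uglov}, $G_e(\boldsymbol{\lambda},\mathbf{s})$ and $G_\infty(\boldsymbol{\lambda},\mathbf{s})$ lie in the span of $\Pi_{l,n}$ whenever $\boldsymbol{\lambda}\in\Pi_{l,n}$. Hence it suffices to argue inside the finite-dimensional subspace $\bigoplus_{\boldsymbol{\lambda}\in\Pi_{l,n}}\mathbb{Q}(v)\boldsymbol{\lambda}$ for each fixed $n$, where all transition matrices are genuine finite matrices indexed by the finite poset $(\Pi_{l,n},\succ)$. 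On such matrices, introduce the set $\mathcal{U}_n$ of $M=(M_{\boldsymbol{\lambda},\boldsymbol{\mu}})$ with entries in $\mathbb{Z}[v]$ such that $M_{\boldsymbol{\lambda},\boldsymbol{\lambda}}=1$, $M_{\boldsymbol{\lambda},\boldsymbol{\mu}}\in v\mathbb{Z}[v]$ when $\boldsymbol{\lambda}\neq\boldsymbol{\mu}$, and $M_{\boldsymbol{\lambda},\boldsymbol{\mu}}=0$ unless $\boldsymbol{\lambda}\succeq\boldsymbol{\mu}$. Writing $M=I-N$, the matrix $N$ is nilpotent since $(\Pi_{l,n},\succ)$ has no infinite chains, so $M^{-1}=\sum_{k\geq0}N^{k}$ is a finite sum and again lies in $\mathcal{U}_n$ (each $N^{k}$ with $k\geq1$ has support in $\{\boldsymbol{\lambda}\succ\boldsymbol{\mu}\}$ and entries in $v^{k}\mathbb{Z}[v]\subseteq v\mathbb{Z}[v]$); and a product of two off-diagonal entries in $v\mathbb{Z}[v]$ stays in $v\mathbb{Z}[v]$, so $\mathcal{U}_n$ is closed under products as well. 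Thus $\mathcal{U}_n$ is a group — the usual ``unitriangular matrices over $\mathbb{Z}[v]$ form a group'' fact, with the total order replaced by the partial order $\succ$.

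Next, by parts (1) and (2) of Theorem~\ref{TH_Uglov} together with condition (2) of Theorem~\ref{TH_Bc_Fock} (which forces the off-diagonal coefficients to be divisible by $v$), the matrices $\Delta^{e}=\big(\Delta_{\boldsymbol{\lambda},\boldsymbol{\mu}}^{e}(v)\big)$ and $\Delta^{\infty}=\big(\Delta_{\boldsymbol{\lambda},\boldsymbol{\mu}}^{\infty}(v)\big)$ expressing $\mathcal{G}_e(\mathbf{s})$ and $\mathcal{G}_\infty(\mathbf{s})$ on the standard basis both belong to $\mathcal{U}_n$ — and, crucially, with respect to the \emph{same} order $\succ$, which is precisely why it was worth arranging an $e$-independent order. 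Since $\mathcal{G}_\infty(\mathbf{s})$ is a basis of $\mathcal{F}$ (Theorem~\ref{TH_Bc_Fock}), inverting $\Delta^{\infty}$ gives $\boldsymbol{\mu}=\sum_{\boldsymbol{\nu}}\big((\Delta^{\infty})^{-1}\big)_{\boldsymbol{\mu},\boldsymbol{\nu}}\,G_\infty(\boldsymbol{\nu},\mathbf{s})$; substituting this into $G_e(\boldsymbol{\lambda},\mathbf{s})=\sum_{\boldsymbol{\mu}}\Delta_{\boldsymbol{\lambda},\boldsymbol{\mu}}^{e}(v)\,\boldsymbol{\mu}$ yields the expansion \eqref{main_dec2} with $d_{\boldsymbol{\lambda},\boldsymbol{\nu}}(v)=\big(\Delta^{e}(\Delta^{\infty})^{-1}\big)_{\boldsymbol{\lambda},\boldsymbol{\nu}}$. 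As $\mathcal{U}_n$ is a group, $d=\Delta^{e}(\Delta^{\infty})^{-1}\in\mathcal{U}_n$, and the three membership conditions defining $\mathcal{U}_n$ are exactly the three asserted properties of $d_{\boldsymbol{\lambda},\boldsymbol{\nu}}(v)$.

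The step that carries the real weight is already behind us: it is the part of Theorem~\ref{TH_Uglov} asserting that the bar-involution on the Fock space is unitriangular with respect to the single $e$-independent order $\succ$, both for $e$ finite and for $e=\infty$, which rests on the analysis of Uglov's straightening relations. Granting that, the present proof has no genuine obstacle; the only points requiring care are the bookkeeping ones above: that $G_e$ and $G_\infty$ preserve the grading by rank, so one really is inverting finite matrices; that $I-M$ is nilpotent for $M\in\mathcal{U}_n$, so $M^{-1}$ exists over $\mathbb{Z}[v]$ and is again unitriangular for $\succ$; and that $\mathcal{U}_n$ is stable under multiplication and inversion with the $v\mathbb{Z}[v]$ condition on off-diagonal entries preserved.
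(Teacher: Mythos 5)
Your proof is correct and is precisely the argument the paper leaves implicit: the paper's own proof of Theorem~\ref{Th_main} is the single line ``It follows from Theorem~\ref{TH_Uglov}(1) and (2),'' and what you have written out — both transition matrices to the standard basis lie in the group of matrices that are unitriangular for the common $e$-independent order $\succ$, with diagonal $1$ and off-diagonal entries in $v\mathbb{Z}[v]$, so their quotient $\Delta^{e}(\Delta^{\infty})^{-1}$ lies in the same group — is exactly the intended deduction. The only cosmetic deviation is a transposed matrix convention relative to the paper's Definition, which writes the factorization as $\Delta_e(v)=\Delta_\infty(v)\,\Delta_\infty^e(v)$ rather than $\Delta^e_\infty=\Delta^e(\Delta^\infty)^{-1}$; this is harmless.
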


\begin{proof}
It follows from Theorem \ref{TH_Uglov}(1) and (2).
\end{proof}

\begin{corollary}
\label{cor_Hw}For ${\boldsymbol{\lambda}}\in B_{e}(\mathbf{s})$, the formula
(\ref{main_dec2}) has the form
\begin{equation}
G_{e}({\boldsymbol{\lambda},\mathbf{s})=}\sum_{{\boldsymbol{\nu}\in}%
B_{\infty}({\mathbf{s}})}d_{{\boldsymbol{\lambda},\boldsymbol{\nu}}%
}(v)G_{\infty}({\boldsymbol{\nu},\mathbf{s}}). \label{dec_HW}%
\end{equation}
\end{corollary}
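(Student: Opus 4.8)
The plan is to deduce Corollary~\ref{cor_Hw} from Theorem~\ref{Th_main} by showing that the only $G_\infty(\boldsymbol{\nu},\mathbf{s})$ which can appear with nonzero coefficient $d_{\boldsymbol{\lambda},\boldsymbol{\nu}}(v)$, when $\boldsymbol{\lambda}\in B_e(\mathbf{s})$, are those with $\boldsymbol{\nu}\in B_\infty(\mathbf{s})$. The key point is that $B_e(\mathbf{s})\subseteq B_\infty(\mathbf{s})$ as subsets of $\Pi_l$, which follows from the discussion at the end of \S2.2: $V_e(\mathbf{s})$ is the $\mathcal{U}'_v(\widehat{\mathfrak{sl}_e})$-submodule of $V_\infty(\mathbf{s})$ generated by $\boldsymbol{\emptyset}$, and correspondingly the crystal $B_e(\mathbf{s})$ embeds into $B_\infty(\mathbf{s})$. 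So if $\boldsymbol{\lambda}\in B_e(\mathbf{s})$, then $\boldsymbol{\lambda}\in B_\infty(\mathbf{s})$, and by Theorem~\ref{Th_U_Comp} we have $G_e(\boldsymbol{\lambda},\mathbf{s})\in\mathcal{G}_e^\circ(\mathbf{s})\subseteq V_e(\mathbf{s})\subseteq V_\infty(\mathbf{s})$.

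Thus $G_e(\boldsymbol{\lambda},\mathbf{s})$ lies in $V_\infty(\mathbf{s})$, and by Theorem~\ref{Th_main} it is a $\mathbb{Z}[v]$-linear combination $\sum_{\boldsymbol{\nu}\in\Pi_l} d_{\boldsymbol{\lambda},\boldsymbol{\nu}}(v)\,G_\infty(\boldsymbol{\nu},\mathbf{s})$. Now I invoke Theorem~\ref{Th_U_Comp} in the $e=\infty$ case: the set $\mathcal{G}_\infty^\circ(\mathbf{s})=\{G_\infty(\boldsymbol{\nu},\mathbf{s})\mid\boldsymbol{\nu}\in B_\infty(\mathbf{s})\}$ is the canonical basis of $V_\infty(\mathbf{s})$, while the remaining $G_\infty(\boldsymbol{\nu},\mathbf{s})$ with $\boldsymbol{\nu}\notin B_\infty(\mathbf{s})$ extend it to a basis of the whole Fock space $\mathcal{F}_\infty^{\mathbf{s}}$. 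Since $\mathcal{G}_\infty(\mathbf{s})$ is a basis of $\mathcal{F}_\infty^{\mathbf{s}}$, the expansion of any vector in this basis is unique; and since $V_\infty(\mathbf{s})$ is spanned by the sub-family $\{G_\infty(\boldsymbol{\nu},\mathbf{s})\mid\boldsymbol{\nu}\in B_\infty(\mathbf{s})\}$, an element of $V_\infty(\mathbf{s})$ can only involve basis vectors from that sub-family. Hence $d_{\boldsymbol{\lambda},\boldsymbol{\nu}}(v)=0$ whenever $\boldsymbol{\nu}\notin B_\infty(\mathbf{s})$, which gives exactly \eqref{dec_HW}.

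The argument is essentially a bookkeeping consequence of two facts already established, so there is no serious obstacle; the only point requiring a little care is the claim $B_e(\mathbf{s})\subseteq B_\infty(\mathbf{s})$, i.e. that passing to the $\mathcal{U}'_v(\widehat{\mathfrak{sl}_e})$-submodule generated by $\boldsymbol{\emptyset}$ does not leave $V_\infty(\mathbf{s})$ and is compatible with the identification of crystal components. This is precisely what the closing paragraph of \S2.2 asserts, using Proposition~\ref{Prop_compa_action}: the formulas there express each $e_i,f_i,t_i$ in terms of the $E_j,F_j,T_j$, so $\mathcal{U}'_v(\widehat{\mathfrak{sl}_e})\cdot\boldsymbol{\emptyset}\subseteq\mathcal{U}_v(\mathfrak{sl}_\infty)\cdot\boldsymbol{\emptyset}=V_\infty(\mathbf{s})$, and on crystals this realizes $B_e(\mathbf{s})$ as a subset of $B_\infty(\mathbf{s})$. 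With that in hand the corollary is immediate from Theorem~\ref{Th_main} and the uniqueness of canonical-basis expansions.
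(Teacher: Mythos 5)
Your proposal is correct and follows the same route as the paper's own proof: both place $G_{e}(\boldsymbol{\lambda},\mathbf{s})$ in $V_{e}(\mathbf{s})\subseteq V_{\infty}(\mathbf{s})$ via Theorem~\ref{Th_U_Comp} and Proposition~\ref{Prop_compa_action}, then use that $\mathcal{G}_{\infty}^{\circ}(\mathbf{s})=\mathcal{G}_{\infty}(\mathbf{s})\cap V_{\infty}(\mathbf{s})$ is a basis of $V_{\infty}(\mathbf{s})$ inside the basis $\mathcal{G}_{\infty}(\mathbf{s})$ of the Fock space, so that uniqueness of the expansion from Theorem~\ref{Th_main} forces $d_{\boldsymbol{\lambda},\boldsymbol{\nu}}(v)=0$ for $\boldsymbol{\nu}\notin B_{\infty}(\mathbf{s})$. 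The side remark $B_{e}(\mathbf{s})\subseteq B_{\infty}(\mathbf{s})$ is true but not actually needed; the chain of module inclusions is what does the work.
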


\begin{proof}
We have already observed that $V_{e}({\mathbf{s}})$ may be regarded as a
$\mathcal{U}_{v}^{\prime}(\widehat{\mathfrak{sl}_{e}})$-submodule
of $V_{\infty}({\mathbf{s}})$ which shares
the common highest weight vector $\mathbf{\emptyset}$. Thus,
we may expand $G_{e}({\boldsymbol{\lambda},
\mathbf{s})\in}\mathcal{G}_{e}^{\circ}(\mathbf{s})$ on the basis
$\mathcal{G}_{\infty}^{\circ}(\mathbf{s})\subset\mathcal{G}_{\infty}%
(\mathbf{s}),$
and Theorem \ref{Th_main} implies (\ref{dec_HW}).
\end{proof}

\begin{definition}
We define
\begin{align*}
\Delta_{e}(v)  &  =(\Delta_{{\boldsymbol{\lambda}},{\boldsymbol{\mu}}}%
^{e}(v))_{{\boldsymbol{\lambda}}\in\Pi_{l},{\boldsymbol{\mu}}\in\Pi_{l}},\\
\Delta_{\infty}(v)  &  =(\Delta_{{\boldsymbol{\lambda}},{\boldsymbol{\mu}}%
}^{\infty}(v))_{{\boldsymbol{\lambda}}\in\Pi_{l},{\boldsymbol{\mu}}\in\Pi_{l}%
},\\
\Delta_{\infty}^{e}(v)  &  =(d_{{\boldsymbol{\lambda}},{\boldsymbol{\nu}}%
}(v))_{{\boldsymbol{\lambda}}\in\Pi_{l},{\boldsymbol{\nu}}\in\Pi_{l}}.
\end{align*}
They depend on $\mathbf{s}$. Then, we have
\[
\Delta_{e}(v)=\Delta_{\infty}(v)\Delta_{\infty}^{e}(v).
\]
\end{definition}

\noindent We also define the following submatrices
\begin{align*}
D_{e}(v)  &  =(\Delta_{{\boldsymbol{\lambda}},{\boldsymbol{\mu}}}%
^{e}(v))_{{\boldsymbol{\lambda}}\in B_{e}(\mathbf{s}),{\boldsymbol{\mu}}\in
B_{\infty}(\mathbf{s})},\\
D_{\infty}(v)  &  =(\Delta_{{\boldsymbol{\lambda}},{\boldsymbol{\mu}}}%
^{\infty}(v))_{{\boldsymbol{\lambda}}\in B_{e}(\mathbf{s}),{\boldsymbol{\mu}%
}\in B_{\infty}(\mathbf{s})},\\
D_{\infty}^{e}(v)  &  =(d_{{\boldsymbol{\lambda}},{\boldsymbol{\nu}}%
}(v))_{{\boldsymbol{\lambda}}\in B_{e}(\mathbf{s}),{\boldsymbol{\nu}}\in
B_{\infty}(\mathbf{s})}.
\end{align*}
Then we have $D_{e}(v)=D_{\infty}(v)D_{\infty}^{e}(v).$

\begin{remark}
If $l=1$, then the matrix $D_{\infty}(v)$ is the identity and $D_{\infty}%
^{e}(v)=D_{e}(v)$.
\end{remark}

\section{Computation of $\Delta_{\infty}^{e}(v)$ and $D_{\infty}^{e}(v)$}

Before proceeding further, we explain algorithmic aspects for computing
$\Delta_{\infty}^{e}(v)$ and $D_{\infty}^{e}(v)$. As $\Delta_{\infty}%
^{e}(v)=\Delta_{\infty}^{-1}(v).\Delta_{e}(v)$, we start with computing
$\Delta_{\infty}(v)$ and $\Delta_{e}(v)$. Two algorithms are already proposed:
one by Uglov and the other by Yvonne. Both use a natural embedding of the Fock
spaces $\mathcal{F} _{e}^{\mathbf{s}}$ into the space of semi-infinite wedge
products and compute the canonical bases $\mathcal{G}_{e}(\mathbf{s})$ and
$\mathcal{G}_{\infty}(\mathbf{s})$.

The algorithm described by Uglov \cite{U} needs steps to compute straightening
laws of the wedge products, which soon starts to require enormous resources
for the computation. It occurs especially in the case when the differences
between two consecutive entries of $\mathbf{s}$ are large.

Yvonne's algorithm \cite{XY2} is much more efficient but it requires subtle
computation related to the commutation relations of ${\mathcal{U}_{v}%
(\widehat{\mathfrak{sl}_{e}})}\otimes\mathcal{H} \otimes{\mathcal{U}_{-v^{-1}%
}(\widehat{\mathfrak{sl}_{l}})}$ on the space of semi-infinite wedge products,
where $\mathcal{H}$ is the Heisenberg algebra. We do not pursue this direction
and refer to \cite{XY2} for complete description of this algorithm.

Once $\mathcal{G}_{e}(\mathbf{s})$ and $\mathcal{G}_{\infty}(\mathbf{s})$ are
computed, we can efficiently compute $\Delta_{\infty}^{e}(v)$ from them: see
\S\ref{sub_gene_algo} below.

The computation of $D_{\infty}^{e}(v)$ is easier. One may compute it directly
from the canonical bases $\mathcal{G}_{e}^{\circ}(\mathbf{s})$ and
$\mathcal{G}_{\infty}^{\circ}(\mathbf{s})$ and we may compute the canonical
bases by the algorithms proposed in \cite{LLT} or \cite{J}. The algorithm
given in \cite{J} was originally suited for multicharges $\mathbf{s}$ such
that $0\leq s_{1}\leq s_{2}\leq\cdots\leq s_{l}<e.$ However, we will see in
\S\ref{algo} that it also computes the canonical bases $\mathcal{G}_{e}%
^{\circ}(\mathbf{s})$ and $\mathcal{G}_{\infty}^{\circ}(\mathbf{s})$ (and thus
the matrix $D_{\infty}^{e}(v)$) for arbitrary multicharge $\mathbf{s}$.
Observe that this only uses ${\mathcal{U}_{v}(\mathfrak{g})}$-module structure
of the Fock space.

\subsection{A general procedure}

\label{sub_gene_algo}

Assume that we have computed the canonical bases $\mathcal{G}_{e}(\mathbf{s})
$ and $\mathcal{G}_{\infty}(\mathbf{s})$. Using the unitriangularity of the
decomposition matrices, one can obtain $\Delta_{\infty}^{e}(v)$ directly from
the relation $\Delta_{\infty} ^{e}(v)=\Delta_{\infty}^{-1}(v).\Delta_{e}(v).$
This can be done efficiently by applying the procedure below.

\begin{enumerate}
\item  Let ${\boldsymbol{\lambda}}\in\Pi_{l,n}$. We know by Theorem
\ref{Th_main} that $G_{e}({\boldsymbol{\lambda},}\mathbf{s})$ may be expanded
on $\mathcal{G}_{\infty}(\mathbf{s})$. We denote
\[
\Lambda(\boldsymbol{\lambda}):=\{\boldsymbol{\nu}\in\Pi_{l,n}\mid
d_{{\boldsymbol{\lambda}},{\boldsymbol{\nu}}}(v)\neq0\}.
\]
Our aim is to find the members of $\Lambda(\boldsymbol{\lambda})$, and
determine $d_{{\boldsymbol{\lambda}},{\boldsymbol{\nu}}}(v)$ when
${\boldsymbol{\nu}}$ is a member. Set ${\boldsymbol{\lambda}}^{0}%
:=\boldsymbol{\lambda}.$ Then ${\boldsymbol{\lambda}}^{0}$ is a member and
$d_{{\boldsymbol{\lambda}},{\boldsymbol{\lambda}}^{0}}(v)=1$.

\item  Let $k\in\mathbb{N}$. Suppose that we already know $k$ members
${\boldsymbol{\lambda}}^{0},\dots,{\boldsymbol{\lambda}}^{k-1}$ of
$\Lambda(\boldsymbol{\lambda})$ and the polynomials $d_{{\boldsymbol{\lambda}%
},{\boldsymbol{\lambda}}^{i}}(v)$, for $i=0,\dots,k-1.$ Then, we expand
\[
G_{e}({\boldsymbol{\lambda},}\mathbf{s})-\sum_{i=0}^{k-1}d_{{\boldsymbol
{\lambda}},{\boldsymbol{\lambda}}^{i}}(v)G_{\infty}({\boldsymbol{\lambda}%
^{i},}\mathbf{s})
\]
into linear combination of the standard basis of $l$-partitions and write
\[
\sum_{\boldsymbol{\nu}\in\Lambda(\boldsymbol{\lambda})\setminus\{{\boldsymbol
{\lambda}}^{0},\dots,{\boldsymbol{\lambda}}^{k-1}\}}d_{{\boldsymbol{\lambda}%
},{\boldsymbol{\nu}}}(v)G_{\infty}({\boldsymbol{\nu},}\mathbf{s}%
)=\sum_{\boldsymbol{\mu}\in\Pi_{l,n}}b_{\boldsymbol{\mu}}(v){\boldsymbol{\mu}%
}.
\]
We have $b_{\boldsymbol{\mu}}(v)\in\mathbb{Z}[v]$ by Theorem \ref{TH_Uglov}.
If the right hand side is zero, we are done. Otherwise, let ${\boldsymbol
{\lambda}}^{k}$ be a maximal $l$-partition in $\{\boldsymbol{\mu}\in\Pi
_{l,n}\mid b_{\boldsymbol{\mu}}(v)\neq0\},$ with respect to the partial order
$\succ.$

\item  Consider ${\boldsymbol{\nu}}\in\Lambda(\boldsymbol{\lambda}%
)\setminus\{{\boldsymbol{\lambda}}^{0},\dots,{\boldsymbol{\lambda}}^{k-1}\}$
which satisfies ${\boldsymbol{\nu}}\succ{\boldsymbol{\lambda}}^{k}.$ If such
${\boldsymbol{\nu}}$ does not exist, then we have
\[
{\boldsymbol{\lambda}}^{k}\in\Lambda(\boldsymbol{\lambda})\setminus
\{{\boldsymbol{\lambda}}^{0},\dots,{\boldsymbol{\lambda}}^{k-1}\}.
\]
Otherwise let ${\boldsymbol{\nu}}^{k}$ be maximal among them. If
${\boldsymbol{\nu}}^{k}$ appears in $G_{\infty}({\boldsymbol{\nu},}%
\mathbf{s})$, for $\boldsymbol{\nu}\in\Lambda(\boldsymbol{\lambda}%
)\setminus\{{\boldsymbol{\lambda}}^{0},\dots,{\boldsymbol{\lambda}}^{k-1}\},$
then $\boldsymbol{\nu}\succeq{\boldsymbol{\nu}}^{k}\succ{\boldsymbol{\lambda}%
}^{k},$ so that the maximality implies $\boldsymbol{\nu}={\boldsymbol{\nu}%
}^{k}.$ Since ${\boldsymbol{\nu}}^{k}$ appears in $G_{\infty}({{\boldsymbol
{\nu}}^{k},}\mathbf{s})$, it follows that $b_{{\boldsymbol{\nu}}^{k}}(v)\neq
0$, which is impossible by the maximality of ${\boldsymbol{\lambda}}^{k}$ and
${\boldsymbol{\nu}}^{k}\succ{\boldsymbol{\lambda}}^{k}.$ Hence, ${\boldsymbol
{\lambda}}^{k}$ is a maximal element of $\Lambda(\boldsymbol{\lambda
})\setminus\{{\boldsymbol{\lambda}}^{0},\dots,{\boldsymbol{\lambda}}^{k-1}\}.$
Therefore, ${\boldsymbol{\lambda}}^{k}$ does not appear in $G_{\infty
}({\boldsymbol{\nu},}\mathbf{s})$, for $\boldsymbol{\nu}\in\Lambda
(\boldsymbol{\lambda})\setminus\{{\boldsymbol{\lambda}}^{0},\dots
,{\boldsymbol{\lambda}}^{k}\},$ and it follows that $d_{{\boldsymbol{\lambda}%
},{\boldsymbol{\lambda}}^{k}}(v)=b_{{\boldsymbol{\lambda}}^{k}}(v).$

\item  We increment $k$ and go to (2).
\end{enumerate}

\subsection{The computation of $\mathcal{G}_{e}^{\circ}(\mathbf{s})$ and
$\mathcal{G}_{\infty}^{\circ}(\mathbf{s})$}

\label{algo}

Let $e\in\mathbb{Z}_{\geq2}\cup\{\infty\}$. Assume first that $0\leq s_{1}\leq
s_{2}\leq\cdots\leq s_{l}<e$. It is proved in \cite{LLT} and \cite{J} that one
may construct a sequence of elements in $\mathbb{Z}/e\mathbb{Z}$
\begin{equation}
\underbrace{k_{1},\cdots,k_{1}}_{u_{1}},\underbrace{k_{2},\cdots,k_{2}}
_{u_{2}},\cdots,\underbrace{k_{s},\cdots,k_{s}}_{u_{s}} , \label{def_sequence}%
\end{equation}
for each ${\boldsymbol{\lambda}}\in B_{e}(\mathbf{s})$, such that if we
define
\[
A_{e}({\boldsymbol{\lambda},}\mathbf{s}):=f_{k_{1}}^{(u_{1})}\cdots f_{k_{s}
}^{(u_{s})}.\mathbf{\emptyset}\in V_{e}({\mathbf{s}})
\]
then
\[
\mathcal{A}_{e}(\mathbf{s})=\left\{  A_{e}({\boldsymbol{\lambda},}\mathbf{s}
)\ |\ {\boldsymbol{\lambda}}\in B_{e}(\mathbf{s})\right\}
\]
is a basis of $V_{e}(\mathbf{s})$. It is easy to obtain the coefficients
$\gamma_{{\boldsymbol{\lambda}},{\boldsymbol{\mu}}}(v)\in\mathbb{Z}[v,v^{-1}]$
in the expansion
\begin{equation}
G_{e}({\boldsymbol{\lambda},\mathbf{s}})=\sum_{{\boldsymbol{\mu}}\in
B_{e}(\mathbf{s})}\gamma_{{\boldsymbol{\lambda}},{\boldsymbol{\mu}}}
(v)A_{e}({\boldsymbol{\mu},\mathbf{s}}) . \label{BC_on_A}%
\end{equation}

\bigskip

When $e\in\mathbb{Z}_{\geq2}$, we have seen in \S\ref{subsec_bij} that there
is an action of the (extended) affine symmetric group $W$ on $\mathbb{Z}^{l}$
such that
\[
\mathcal{B}^{l}:=\left\{  (s_{1},\ldots,s_{l})\in\mathbb{Z}^{l}\ |\ 0\leq
s_{1}\leq\cdots\leq s_{l}<e\right\}
\]
is a fundamental domain for this action. Hence, for any $\mathbf{v:}
=(v_{1},\ldots,v_{l})\in\mathbb{Z}^{l}$, there exist $\mathbf{s}
:=(s_{1},\ldots,s_{l})\in\mathcal{B}^{l}$ and $w\in W$ such that
$\mathbf{v}=w.\mathbf{s}$. Since $\mathbf{v}$ and $\mathbf{s}$ yield the same
dominant weight, we have an isomorphism $\phi_{\mathbf{s},\mathbf{v}}$ from
$V_{e}(\mathbf{s})$ to $V_{e}(\mathbf{v})$. We can assume that $\phi
_{\mathbf{s},\mathbf{v}}(\mathbf{\emptyset})=\mathbf{\emptyset}.$ For each
${\boldsymbol{\lambda}}\in B_{e}(\mathbf{s}),$ we set
\[
A_{e}({\boldsymbol{\lambda},\mathbf{v}})=f_{k_{1}}^{(r_{1})}\cdots f_{k_{s}
}^{(r_{s})}.\mathbf{\emptyset}\in V_{e}(\mathbf{v}),
\]
where the pairs $(k_{a},r_{a})$ are defined by (\ref{def_sequence}). Then we
have $\phi_{\mathbf{s},\mathbf{v}}(A_{e} ({\boldsymbol{\lambda},\mathbf{s}%
}))=A_{e}({\boldsymbol{\lambda},\mathbf{v} }).$ By the uniqueness of the
crystal basis on $V_{e}(\mathbf{v})$ proved by Kashiwara, we also have
$\phi_{\mathbf{s},\mathbf{v}}(G_{e}({\boldsymbol{\lambda},\mathbf{s}}
))=G_{e}(\varphi_{\mathbf{s},\mathbf{v}}({\boldsymbol{\lambda}),\mathbf{v}}),
$ where $\varphi_{\mathbf{s},\mathbf{v}}$ is the crystal isomorphism from
$B_{e}(\mathbf{s})$ to $B_{e}(\mathbf{v})$ (see \cite{JL} for a combinatorial
description of $\varphi_{\mathbf{s},\mathbf{v}}$). By applying $\phi
_{\mathbf{s},\mathbf{v}}$ to (\ref{BC_on_A}), we obtain
\[
G_{e}({\boldsymbol{\nu},\mathbf{v}}) =\sum_{{\boldsymbol{\mu}}\in
B_{e}(\mathbf{s})} \gamma_{\varphi_{\mathbf{s},\mathbf{v}}^{-1}({\boldsymbol
{\nu}}),{\boldsymbol{\mu}}}(v) A_{e}({\boldsymbol{\mu},\mathbf{v}}),
\]
for ${\boldsymbol{\nu}\in} B_{e}(\mathbf{v}),$ and it follows that
\[
\mathcal{G}_{e}(\mathbf{v})=\left\{  \sum_{{\boldsymbol{\mu} }\in
B_{e}(\mathbf{s})}\gamma_{{\boldsymbol{\lambda}},{\boldsymbol{\mu}} }%
(v)A_{e}({\boldsymbol{\mu},\mathbf{v}})\ |\ {\boldsymbol{\lambda}}\in
B_{e}(\mathbf{s})\right\}  .
\]
Hence, the algorithms in \cite{LLT} and \cite{J2} compute the canonical basis
$\mathcal{G}_{e}(\mathbf{v})$ for any multicharge $\mathbf{v}=(v_{1}%
,\ldots,v_{l} )\in\mathbb{Z}^{l}$. Applying the general procedure in
\S\ref{algo} restricted to the canonical bases $\mathcal{G}_{e}^{\circ
}(\mathbf{v})$ and $\mathcal{G}_{\infty}^{\circ}(\mathbf{v})$, we may compute
$D_{\infty}^{e}(v)$.

\begin{remark}
Another algorithm is recently proposed by Fayers \cite{Fa} for computing the
canonical basis of the highest weight ${\mathcal{U}}_{v}^{\prime}%
(\widehat{\mathfrak{sl}_{e}})$-modules which is realized in the tensor product
of level one Fock spaces.
\end{remark}

\subsection{Example}

We set $e=2$, Then the matrix $D_{e}(v)$ of the canonical basis of the
$\mathcal{U}_{v}(\widehat{\mathfrak{sl}_{e}})$-module $V_{e}(0,0)$ is:
\[%
\begin{array}
[c]{c}%
(\emptyset,(3) )\\
((3),\emptyset)\\
((1),(2) )\\
((2),(1)\\
(\emptyset,(2.1))\\
((2.1),\emptyset)\\
((1),(1.1))\\
((1.1),(1))\\
(\emptyset, (1.1.1) )\\
((1.1.1),\emptyset)
\end{array}%
\begin{array}
[c]{c}%
\begin{array}
[c]{c}
\end{array}
\end{array}
\left(
\begin{array}
[c]{ccc}%
1 & . & .\\
v & . & .\\
v & 1 & .\\
v^{2} & v & .\\
. & . & 1\\
. & . & v\\
v & v^{2} & .\\
v^{2} & v^{3} & .\\
v^{2} & . & .\\
v^{3} & . & .
\end{array}
\right)
\]
where dots mean $0$ and each row is labeled by a $2$-partition of rank $3$.
The matrix $D_{\infty}(v)$ of the canonical basis of the $\mathcal{U}%
_{v}({\mathfrak{sl}_{\infty}})$-module $V_{\infty}{(0,0)}$ is:
\[%
\begin{array}
[c]{c}%
(\emptyset,(3) )\\
((3),\emptyset)\\
((1),(2) )\\
((2),(1)\\
(\emptyset,(2.1))\\
((2.1),\emptyset)\\
((1),(1.1))\\
((1.1),(1))\\
(\emptyset, (1.1.1) )\\
((1.1.1),\emptyset)
\end{array}%
\begin{array}
[c]{c}%
\begin{array}
[c]{c}
\end{array}
\end{array}
\left(
\begin{array}
[c]{ccccc}%
1 & . & . & . & .\\
v & . & . & . & .\\
. & 1 & . & . & .\\
. & v & . & . & .\\
. & . & 1 & . & .\\
. & . & v & . & .\\
. & . & . & 1 & .\\
. & . & . & v & .\\
. & . & . & . & 1\\
. & . & . & . & v
\end{array}
\right)
\]
The matrix $D^{e}_{\infty}(v)$ obtained from our algorithm is:
\[
\left(
\begin{array}
[c]{ccc}%
1 & . & .\\
v & 1 & .\\
. & . & 1\\
v & v^{2} & .\\
v^{2} & . & .
\end{array}
\right)
\]
and one can check that we have
\[
D_{e}(v)=D_{\infty}(v).D_{\infty}^{e}(v).
\]

\section{Positivity of the coefficients in $d_{{\boldsymbol{\protect\lambda},
\boldsymbol{\protect\nu}}}(v)$}

\label{Sec_pos}

The aim of this section is to study the entries of the matrix $D^e_{\infty}(v)
$. The main result asserts that they are polynomials with nonnegative integer coefficients.

\subsection{Some notation on KL-polynomials}

Let $H$ be the extended affine Hecke algebra of the symmetric group $S_{r}$.
Namely, it is generated by $T_{1},\dots,T_{r-1}$ and $X^{\lambda}$, for
$\lambda\in\oplus_{i=1}^{r}\mathbb{Z}\epsilon_{i}$, such that the defining
relations are
\begin{align*}
(T_{i}-v^{-1})(T_{i}+v)  &  =0,\quad X^{\lambda}T_{i}=T_{i}X^{s_{i}\lambda
}+(v-v^{-1})\frac{X^{s_{i}\lambda}-X^{\lambda}}{1-X^{\alpha_{i}}}\\
X^{\lambda}X^{\mu}  &  =X^{\mu}X^{\lambda},\quad X^{\lambda}X^{-\lambda}=1
\end{align*}
and the Artin braid relations for $T_{1},\dots,T_{r-1}.$ The affine Hecke
algebra admits a canonical basis $\{C_{w}^{\prime}\mid w\in W\}$ such that
\[
C_{w}^{\prime}=v^{\ell(w)}\sum_{y\in W,y\leq w}%
P_{y,w}(v^{-2})v^{-\ell(y)}T_{y}%
\]
where $\leq$ is the Bruhat order on $W.$ We refer the reader to \cite{Ram} and
\cite{KT} for a detailed review on affine Hecke algebras, the definition of
the relevant length function and the Kazhdan-Lusztig basis. The polynomials
$P_{y,w}(v^{-2})$ are the affine KL-polynomials. They admit nonnegative
integer coefficients. We also recall the following property
\begin{equation}
P_{y,w}=P_{s_{i}y,w} \label{prop_basic_P}%
\end{equation}
for any $y<w$ in $W$ and $i=1,\ldots,r$ such that $s_{i}w<w.$

\subsection{Expression of the coefficients $\Delta_{{\boldsymbol
{
\protect
\lambda}},{\boldsymbol{\protect\mu}}}^{e}(v)$ in terms of KL-polynomials}

\label{Expression in KL}The aim of this paragraph is to recall Uglov's
construction of finite wedge product\cite{U} and to show in a simpler manner than \cite{U} that the entries
$\Delta_{{\boldsymbol{\lambda
}},{\boldsymbol{\mu}}}^{e}(v)$ are expressed in terms parabolic
Kazhdan-Lusztig polynomials.

We want to introduce the space of finite wedge products. Consider $a\in A^{r}
$ and $b\in B^{r}.$ We define $_{a}\!W,W_{b},w_{a},w_{b}$ as in \S
\ref{subsec_bij}. The subgroups $_{a}\!W$ and $W_{b}$ define parabolic
subalgebras $H_{a}$ and $H_{b}$ of the affine Hecke algebra $H$. If we denote
\[
J=\{i\mid1\leq i\leq r-1,b_{i}=b_{i+1}\},
\]
then the parabolic subgroup $W_{J}$ is nothing but the subgroup $W_{b}$. Let
$1_{a}^{+}$ (resp. $1_{b}^{-}$) be the right $H_{a}$-module (resp. left
$H_{b}$-module) such that
\begin{equation}%
\begin{array}
[c]{lll}%
1_{a}^{+}T_{i}=v^{-1}1_{a}^{+}\;\; & (s_{i}\in_{a}\!\!W) & \\
T_{i}1_{b}^{-}=-v1_{b}^{-}\;\; & (s_{i}\in W_{b}). &
\end{array}
\label{def_1(a,b)}%
\end{equation}
We define $\Lambda^{r}(a,b)=1_{a}^{+}%
\otimes_{H_{a}}H\otimes_{H_{b}}1_{b}^{-}.$ Then, the \emph{space of finite wedges}
$\Lambda^{r}$ is the direct sum of the $\Lambda^{r}(a,b)$, for $a\in A^{r}$
and $b\in B^{r}.$ We define the bar-involution on $\Lambda^{r}$ by
\[
\overline{1_{a}^{+}\otimes h\otimes1_{b}^{-}}=1_{a}^{+}\otimes{\overline{h}%
}\otimes1_{b}^{-}.
\]

\begin{definition}
Let $\xi\in P.$ Then, there are unique $a\in A^{r}$ and $x\in{^{a}W}$ such
that $\xi=ax.$ We denote this $x$ by $x(\xi)$.
\end{definition}

We say that $\xi$ is \emph{$J$-dominant} and write $\xi\in P_{b}^{++}$, if
$\xi_{i}>\xi_{i+1}$ whenever $b_{i}=b_{i+1}$. Similarly, we say that $\xi\in
P_{b}^{+}$ if $\xi_{i}\geq\xi_{i+1}$ whenever $b_{i}=b_{i+1}$. Note that
$\zeta(\boldsymbol{\lambda})\in P_{b}^{++}$, for $\boldsymbol{\lambda}\in
\Pi_{l}$. If $\xi\in P_{b}^{++}$, it follows by \cite[Prop. 3.8]{U} that
$x(\xi)s<x(\xi)$ in the Bruhat order, for any $s\in W_{b}$. So $x(\xi)w_{b}$
is the minimal length coset representative of $_{a}\!Wx(\xi)W_{b}$.

By \cite[Lem. 3.19, Prop. 3.20]{U}, the \emph{wedge basis} of $\Lambda
^{r}(a,b)$ is given by
\[
\{|\boldsymbol{\lambda}\rangle=1_{a}^{+}\otimes T_{x(\zeta(\boldsymbol
{\lambda}))w_{b}}\otimes1_{b}^{-}=(-v)^{-\ell(w_{b})}1_{a}^{+}\otimes
T_{x(\zeta(\boldsymbol{\lambda}))}\otimes1_{b}^{-}\mid\zeta(\boldsymbol
{\lambda})\in aW\}.
\]
Here we have written for short, $a=a(\mathbf{k})$ and
$b=b(\mathbf{k)=b(\boldsymbol{\lambda})}$ where $\mathbf{k}=\tau_{l}%
^{-1}(\boldsymbol{\lambda},\mathbf{s})$. We put $x=x(\zeta
(\boldsymbol{\lambda}))w_{b}$. Then, by the Kazhdan-Lusztig theory,%
\[
C_{w_{a}x}^{\prime}=v^{\ell(w_{a}x)}\sum_{y\in W}P_{y,w_{a}x}(v^{-2}%
)v^{-\ell(y)}T_{y}%
\]
is bar-invariant. As
\begin{equation}
W\simeq_{a}\!\!W\times\{x(\eta)\mid\eta\in aW\} \label{x(heta)}%
\end{equation}
we have
\begin{align*}
C_{w_{a}x}^{\prime}  &  =v^{\ell(w_{a}x)}\sum_{\eta\in aW}\sum_{u\in_{a}\!%
W}P_{ux(\eta),w_{a}x}(v^{-2})v^{-\ell(u)-\ell(x(\eta))}T_{u}T_{x(\eta)}\\
&  =v^{\ell(w_{a}x)}\sum_{\eta\in aW}\sum_{u\in_{a}\!W}P_{w_{a}x(\eta),w_{a}%
x}(v^{-2})v^{-\ell(u)-\ell(x(\eta))}T_{u}T_{x(\eta)}
\end{align*}
where the last equality is a consequence of (\ref{prop_basic_P}). Set
\[
C_{e}^{+}(\boldsymbol{\lambda})=\frac{v^{-\ell(w_{a})}}{\sum_{u\in_{a}\!%
W}v^{-2\ell(u)}}1_{a}^{+}\otimes C_{w_{a}x}^{\prime}\otimes1_{b}^{-}%
\]
where $x=x(\xi)w_{b}$ and $\xi=\zeta(\boldsymbol{\lambda}).$ Then, using
(\ref{def_1(a,b)}), we have that
\[
C_{e}^{+}(\boldsymbol{\lambda})=\sum_{\eta\in aW}v^{\ell(x)-\ell(x(\eta
))}P_{w_{a}x(\eta),w_{a}x}(v^{-2})1_{a}^{+}\otimes T_{x(\eta)}\otimes1_{b}^{-}%
\]
is bar-invariant. When $\eta$ admits repeated entries, one can verify that
$1_{a}^{+}\otimes T_{x(\eta)}\otimes1_{b}^{-}$ is equal to $0$. Here we refer
the reader to \S3.3 of \cite{U} for a detailed proof (which justifies the
terminology of Fock space used). Now, we rewrite
$C_{e}^{+}(\boldsymbol{\lambda})$ into the expression
\begin{multline*}
C_{e}^{+}(\boldsymbol{\lambda})=\\
\sum_{\eta\in aW\cap P_{b}^{++}}\sum_{u\in W_{b}}v^{\ell(x)-\ell(x(\eta
)w_{b}u)}P_{w_{a}x(\eta)w_{b}u,w_{a}x}(v^{-2})(-v)^{\ell(u)}1_{a}^{+}\otimes
T_{x(\eta)w_{b}}\otimes1_{b}^{-}.
\end{multline*}
Recall that
\[
P_{w_{a}x(\eta)w_{b},w_{a}x(\xi)w_{b}}^{J,-1}(v^{-2})=\sum_{u\in W_{b}%
}(-1)^{\ell(u)}P_{w_{a}x(\eta)w_{b}u,w_{a}x(\xi)w_{b}}(v^{-2})
\]
is a parabolic Kazhdan-Lusztig polynomial. These polynomials were introduced
by Deodhar \cite{Deo}. As $x=x(\xi)w_{b}$ and $v^{\ell(x)-\ell(x(\eta)w_{b}%
u)}=v^{\ell(x(\xi))-\ell(x(\eta))-\ell(u)}$, we have
\begin{equation}
C_{e}^{+}(\boldsymbol{\lambda})=\sum_{\eta\in aW\cap P_{b}^{++}}v^{\ell
(x(\xi))-\ell(x(\eta))}P_{w_{a}x(\eta)w_{b},w_{a}x(\xi)w_{b}}^{J,-1}%
(v^{-2})1_{a}^{+}\otimes T_{x(\eta)w_{b}}\otimes1_{b}^{-}. \label{CBWed}%
\end{equation}
It satisfies the defining properties of the plus
canonical basis introduced by Uglov in \cite{U}. Thus, we have recovered
Uglov's result Theorem \ref{TH_Uglov}(3). 
To be more precise, let ${\boldsymbol{\lambda}},{\boldsymbol{\mu}}\in\Pi
_{l,n}$. Choose $r\in\mathbb{N}$ as in \S\ \ref{subsec_bij}, and define
$\mathbf{k},\mathbf{l}\in\mathbb{Z}^{r}$ by
\[
\mathbf{k}=\tau_{l}^{-1}(\boldsymbol{\lambda},\mathbf{s})\;\text{ and
}\;\mathbf{l}=\tau_{l}^{-1}(\boldsymbol{\mu},\mathbf{s}).
\]
Define $a(\mathbf{k}),a(\mathbf{l})$ and $b(\boldsymbol{\lambda}%
),b(\boldsymbol{\mu
})$ as in \S\ \ref{subsec_bij}, and set $\xi=\zeta(\boldsymbol{\lambda}) $
and $\eta=\zeta(\boldsymbol{\mu}).$

\begin{theorem}
\label{Th_Uglov_dec}With the above notation, we have

\begin{enumerate}
\item  If $a(\mathbf{k})\neq a(\mathbf{l}),$ or $b(\boldsymbol{\lambda})\neq
b(\boldsymbol{\mu})$ then $\Delta_{{\boldsymbol{\lambda}},{\boldsymbol{\mu}}%
}^{e}(v)=0.$

\item  If $a(\mathbf{k})=a(\mathbf{l})=a\in A^{r}$ and $b(\boldsymbol{\lambda
})=b(\boldsymbol{\mu})=b\in B^{r},$ then
\begin{equation}
\Delta_{{\boldsymbol{\lambda}},{\boldsymbol{\mu}}}^{e}(v)=v^{\ell(x(\xi
))-\ell(x(\eta))}P_{w_{a}x(\eta)w_{b},w_{a}x(\xi)w_{b}}^{J,-1}(v^{-2}).
\label{decomposition}%
\end{equation}
\end{enumerate}
\end{theorem}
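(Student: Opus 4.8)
The plan is to identify the canonical basis element $G_e(\boldsymbol{\lambda},\mathbf{s})$ with the finite-wedge element $C_e^+(\boldsymbol{\lambda})$ already computed in \eqref{CBWed}, and then to read off the coefficient of the wedge $|\boldsymbol{\mu}\rangle$ directly from that formula. First I would fix $\boldsymbol{\lambda},\boldsymbol{\mu}\in\Pi_{l,n}$ and choose $r$ large enough that \eqref{condkr} holds for both $\mathbf{k}=\tau_l^{-1}(\boldsymbol{\lambda},\mathbf{s})$ and $\mathbf{l}=\tau_l^{-1}(\boldsymbol{\mu},\mathbf{s})$, and, using Remark \ref{rq_inde}, large enough that the data $b(\boldsymbol{\lambda}),\zeta(\boldsymbol{\lambda}),b(\boldsymbol{\mu}),\zeta(\boldsymbol{\mu})$ have stabilized. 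Cutting the semi-infinite wedges at the $r$-th place identifies the span of the $l$-partitions of $\mathcal{F}_e^{\mathbf{s}}$ captured by $r$ with a direct sum of the spaces $\Lambda^r(a,b)$, as recalled in \S\ref{Expression in KL} following \cite{U}, in such a way that each $\boldsymbol{\nu}$ is sent to the wedge $|\boldsymbol{\nu}\rangle$ lying in the single summand $\Lambda^r(a(\tau_l^{-1}(\boldsymbol{\nu},\mathbf{s})),b(\boldsymbol{\nu}))$, and so that Uglov's bar-involution on $\mathcal{F}_e^{\mathbf{s}}$ restricts to the bar-involution $\overline{1_a^+\otimes h\otimes 1_b^-}=1_a^+\otimes\overline{h}\otimes 1_b^-$ on $\Lambda^r$.

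Under this dictionary, $C_e^+(\boldsymbol{\lambda})$ lies in $\Lambda^r(a(\mathbf{k}),b(\boldsymbol{\lambda}))$, is bar-invariant by construction of $C'_{w_ax}$, and by \eqref{CBWed} equals $|\boldsymbol{\lambda}\rangle$ plus a $v\mathbb{Z}[v]$-combination of other wedges (the diagonal coefficient is $P^{J,-1}_{w_ax(\xi)w_b,w_ax(\xi)w_b}(v^{-2})=1$, and the off-diagonal ones lie in $v\mathbb{Z}[v]$ by the degree bound for parabolic Kazhdan--Lusztig polynomials together with $\ell(w_ax(\eta)w_b)=\ell(w_a)+\ell(x(\eta))+\ell(w_b)$, valid since $\zeta(\boldsymbol{\mu})\in P_b^{++}$ and $x(\eta)\in{}^aW$). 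Hence $C_e^+(\boldsymbol{\lambda})$ satisfies the two characterizing conditions of Theorem \ref{TH_Bc_Fock}, so $C_e^+(\boldsymbol{\lambda})=G_e(\boldsymbol{\lambda},\mathbf{s})$; this is exactly the verification carried out just before the theorem. Part (1) is then immediate: if $a(\mathbf{k})\neq a(\mathbf{l})$ or $b(\boldsymbol{\lambda})\neq b(\boldsymbol{\mu})$ the wedge $|\boldsymbol{\mu}\rangle$ sits in a summand distinct from $\Lambda^r(a(\mathbf{k}),b(\boldsymbol{\lambda}))$, hence cannot occur in $C_e^+(\boldsymbol{\lambda})=G_e(\boldsymbol{\lambda},\mathbf{s})$, so $\Delta_{{\boldsymbol{\lambda}},{\boldsymbol{\mu}}}^e(v)=0$. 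For part (2), when $a(\mathbf{k})=a(\mathbf{l})=a$ and $b(\boldsymbol{\lambda})=b(\boldsymbol{\mu})=b$, both $\xi=\zeta(\boldsymbol{\lambda})$ and $\eta=\zeta(\boldsymbol{\mu})$ lie in $aW\cap P_b^{++}$ (as observed before the statement), so $\eta$ is one of the summation indices in \eqref{CBWed} and $|\boldsymbol{\mu}\rangle=1_a^+\otimes T_{x(\eta)w_b}\otimes 1_b^-$; extracting the coefficient of this wedge from \eqref{CBWed} yields precisely \eqref{decomposition}.

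I expect the only genuinely delicate point to be the one invoked from the preceding material: that the truncation map really does intertwine Uglov's bar-involution on $\mathcal{F}_e^{\mathbf{s}}$ with the bar-involution on $\Lambda^r$, so that the canonical basis of the Fock space is carried onto the family $\{C_e^+(\boldsymbol{\lambda})\}$, and that, as a consequence, the right-hand side of \eqref{decomposition}, which a priori depends on the auxiliary $r$, is in fact independent of it for $r$ large; this independence follows from the fact that $\Delta_{{\boldsymbol{\lambda}},{\boldsymbol{\mu}}}^e(v)$ is an intrinsic Fock-space quantity, together with the stabilization described in Remark \ref{rq_inde}. Once this is granted, the theorem itself is pure bookkeeping: tracking in which summand $\Lambda^r(a,b)$ a given wedge lives, and reading a single coefficient off \eqref{CBWed}.
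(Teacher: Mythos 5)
Your proposal is correct and follows essentially the same route as the paper: the theorem is stated there as a direct reading-off of coefficients from the expansion \eqref{CBWed} of $C_e^+(\boldsymbol{\lambda})$ on the wedge basis, once $C_e^+(\boldsymbol{\lambda})$ is identified with $G_e(\boldsymbol{\lambda},\mathbf{s})$ via bar-invariance and the congruence mod $v\mathcal{L}_{\mathbb{Z}}$. You have simply made explicit the degree bound on parabolic Kazhdan--Lusztig polynomials that justifies the congruence, and the summand decomposition of $\Lambda^r$ that justifies part (1), both of which the paper leaves implicit.
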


\subsection{Stabilization for $e=\infty$}

\label{sub-sec_stab}

Now we assume that $\mathbf{s}\in\mathbb{Z}^{l}$ and ${\boldsymbol{\lambda}%
\in\Pi}_{l}$ are fixed and we increase $e$. By Remark \ref{rq_inde}, we have
seen that for any $e^{\prime}>e$, one can choose $r$ such that $\xi
=\zeta(\boldsymbol{\lambda})$ coincide for $e$ and $e^{\prime}$. Since
$\mathbf{s}$ and ${\boldsymbol{\lambda}}$ are fixed, when $e^{\prime}$ is
sufficiently large, there exist $\tilde{x}(\xi)\in S_{r}$ and $\tilde
{a}=(\tilde{a}_{1},\dots,\tilde{a}_{r})$ such that
\begin{equation}
\tilde{a}_{1}\leq\cdots\leq\tilde{a}_{r},\;\;\tilde{x}(\xi)\in{^{\tilde{a}}%
}S_{r}\;\;\text{ and }\;\;\xi=\tilde{a}\tilde{x}(\xi). \label{xi}%
\end{equation}
This only means that we do not need translations by $e^{\prime}\mu$, for $\mu\in P$, to 
reach the fundamental domain when $e^{\prime}$ is sufficiently large. In the sequel, we refer to this
stabilization phenomenon as the $e=\infty$ case. By Remark \ref{rq_inde} we
have the following expression for the $e=\infty$ case :
\[
\Delta_{{\boldsymbol{\lambda}},{\boldsymbol{\mu}}}^{\infty}(v)=v^{\ell
(\tilde{x}(\zeta(\boldsymbol{\lambda)}))-\ell(\tilde{x}(\zeta(\boldsymbol
{\mu)}))}P_{w_{\tilde{a}}\tilde{x}(\zeta(\boldsymbol{\mu)})w_{b},w_{\tilde{a}%
}\tilde{x}(\zeta(\boldsymbol{\lambda)})w_{b}}^{J,-1}(v^{-2}),
\]
for ${\boldsymbol{\lambda}},{\boldsymbol{\mu}}\in\Pi_{l}.$ Moreover, one can
assume that $r$ is adjusted such that $b$ and $\xi=\zeta(\boldsymbol{\lambda})$ 
are the same for $e$ finite (fixed) and $e=\infty$. In particular, we have 
$\xi\in\tilde{a}S_{r}$, for ${\boldsymbol{\lambda}}\in
\Pi_{l}$, as before. Then, Theorem
\ref{Th_Uglov_dec}(2) implies that we may assume $\eta S_{r}=\tilde{a}S_{r}$ for {$\eta
=\zeta(\boldsymbol{\mu})$.}

Recall that {$\xi=\zeta(\boldsymbol{\lambda})$ and $\eta=\zeta(\boldsymbol
{\mu})$ belong to $P_{b}^{++}$. }Then Theorems \ref{Th_main} and
\ref{Th_Uglov_dec} imply that there exist polynomials
\[
d_{\gamma\xi}(v)\in\mathbb{Z}[v],\;\;\text{for $\gamma\in P_{b}^{++}$,}%
\]
such that
\begin{multline*}
v^{\ell(x(\xi))-\ell(x(\eta))}P_{w_{a}x(\eta)w_{b},w_{a}x(\xi)w_{b}}%
^{J,-1}(v^{-2})=\\
\sum_{\gamma\in\tilde{a}S_{r}\cap P_{b}^{++}}v^{\ell(\tilde{x}(\gamma
))-\ell(\tilde{x}(\eta))}P_{w_{\tilde{a}}\tilde{x}(\eta)w_{b},w_{\tilde{a}%
}\tilde{x}(\gamma)w_{b}}^{J,-1}(v^{-2})d_{\gamma\xi}(v).
\end{multline*}
Define a linear map $\psi:\Lambda(\tilde{a},b)\hookrightarrow\Lambda(a,b)$ by
\[
1_{\tilde{a}}^{+}\otimes T_{\tilde{x}(\xi)w_{b}}\otimes1_{b}^{-}\mapsto
1_{a}^{+}\otimes T_{x(\xi)w_{b}}\otimes1_{b}^{-}=1_{a}^{+}\otimes
T_{x(\tilde{a})}T_{\tilde{x}(\xi)w_{b}}\otimes1_{b}^{-}.
\]
Then, in view of (\ref{CBWed}), the above equality is equivalent to
\begin{equation}
C_{e}^{+}(\boldsymbol{\lambda})=\sum_{\tilde{a}\in aW\cap P^{-}}\sum
_{\gamma:=\zeta(\boldsymbol{\nu})\in\tilde{a}S_{r}\cap P_{b}^{++}}d_{\gamma
\xi}(v)\psi(C_{\infty}^{+}(\boldsymbol{\nu})), \label{refomulation}%
\end{equation}
where
\begin{equation}
C_{\infty}^{+}(\boldsymbol{\nu})=\frac{v^{-\ell(w_{\tilde{a}})}}{\sum_{u\in
_{\tilde{a}}\!W}v^{-2\ell(u)}}1_{\tilde{a}}^{+}\otimes C_{w_{\tilde{a}}\tilde
{x}(\zeta(\boldsymbol{\nu}))w_b}^{\prime}\otimes1_{b}^{-}. \label{BcInfinite}%
\end{equation}

\subsection{Proof of the positivity}

The idea of the proof is to expand $C_{e}^{+}({\boldsymbol{\lambda}})$ into a
linear combination of $\psi(C_{\infty}^{+}(\boldsymbol{\mu}))$ and compare it
with (\ref{refomulation}). The famous positivity result of the multiplicative
structure constants with respect to the Kazhdan-Lusztig basis and its
generalization in \cite{GH} then yields the desired positivity\footnote{One
purpose of \cite{GH} is to introduce LLT polynomials for general root systems.
Note that LLT polynomials for finite root systems other than type A had been
introduced independently in \cite{LEC}. It is interesting to compare the two
definitions.}. Recall the basis
\[
C_{w}^{\prime}=v^{\ell(w)}\sum_{y\in W}P_{y,w}(v^{-2})v^{-\ell(y)}T_{y}.
\]
For $y\in W$, we write $y=y^{\prime}y^{\prime\prime}$, where $y^{\prime\prime
}\in S_{r}$ and $y^{\prime}$ is the minimal length coset representative of
$yS_{r}$. Then we define
\begin{equation}
U_{y}=T_{y^{\prime}}C_{y^{\prime\prime}}^{\prime}. \label{def_U}%
\end{equation}
It is clear that we may write
\begin{equation}
C_{w}^{\prime}=\sum_{y\in W}A_{y,w}(v)U_{y}, \label{C'_on_U}%
\end{equation}
where $A_{y,w}(v)\in\mathbb{Z}[v,v^{-1}]$. By \cite[Cor. 3.9]{GH}, we have in
fact $A_{y,w}(v)\in\mathbb{N}[v,v^{-1}]$.

We write $y=ux(\gamma)$, for $u\in_{a}\!\!W$ and $\gamma\in aW$, by 
(\ref{x(heta)}). Then we have
\[
U_{y}=U_{ux(\gamma)}=T_{u}U_{x(\gamma)}
\]
and it implies that 
\[
T_iU_y=\begin{cases} U_{s_iy} \quad&(s_iy>y)\\
                     (v^{-1}-v)U_{y}+U_{s_{i}y} &(s_iy<y).\end{cases}
\]
Let $w=w_{a}x(\xi)w_{b}$ and $\xi=\zeta({\boldsymbol{\lambda}})$. 
As $s_{i}w<w$, for $s_{i}\in_{a}\!\!W$, we deduce
\begin{align*}
v^{-1}C_{w}^{\prime}  &  =T_{i}C_{w}^{\prime}=\sum_{s_{i}y>y}A_{y,w}%
(v)U_{s_{i}y}+\sum_{s_{i}y<y}A_{y,w}(v)\left(  (v^{-1}-v)U_{y}+U_{s_{i}%
y}\right) \\
&  =\sum_{s_{i}y<y}\left(  A_{s_{i}y,w}(v)+(v^{-1}-v)A_{y,w}(v)\right)
U_{y}+\sum_{s_{i}y>y}A_{s_{i}y,w}(v)U_{y}.
\end{align*}
Thus, $A_{s_{i}y,w}(v)=v^{-1}A_{y,w}(v)$ if $s_{i}y>y$, and it follows that 
\[
A_{y,w}(v)=v^{-\ell(u)}A_{x(\gamma),w}(v),\quad\text{for $y=ux(\gamma)$}.
\]
Therefore, we have
\begin{equation}
\left(  \sum_{u\in_{a}\!W}v^{-\ell(u)}T_{u}\right)  \left(  \sum_{\gamma\in
aW}A_{x(\gamma),w}(v)U_{x(\gamma)}\right)  =C_{w}^{\prime}. \label{C'_factor}%
\end{equation}
Hence, for any ${\boldsymbol{\lambda}}\in\Pi_{l},$ the plus canonical basis is
given by
\begin{align}
C_{e}^{+}({\boldsymbol{\lambda}})  &  =\frac{v^{-\ell(w_{a})}}{\sum_{u\in
_{a}\!W}v^{-2\ell(u)}}1_{a}^{+}\otimes C_{w}^{\prime}\otimes1_{b}^{-}%
\label{express_C+}\\
&  =\sum_{\gamma\in aW}v^{-\ell(w_{a})}A_{x(\gamma),w_{a}x(\xi)w_{b}}%
(v)1_{a}^{+}\otimes U_{x(\gamma)}\otimes1_{b}^{-}\nonumber\\
&  =\sum_{\tilde{a}\in aW\cap P^{-}}\sum_{z\in S_{r}}v^{-\ell(w_{a}%
)}A_{x(\tilde{a})z,w_{a}x(\xi)w_{b}}(v)1_{a}^{+}\otimes T_{x(\tilde{a})}%
C_{z}^{\prime}\otimes1_{b}^{-}\nonumber
\end{align}
where the second equality follows from $w=w_{a}x(\xi)w_{b}$,
(\ref{def_1(a,b)}) and (\ref{C'_factor}), the third from (\ref{def_U}). Note
that $_{\tilde{a}}\!W=S_{r}\cap x(\tilde{a})^{-1} {_{a}\!W}x(\tilde{a})$ by
$\tilde{a}=ax(\tilde{a})$. Then (\ref{def_1(a,b)}) allows us to write
\[
1_{a}^{+}\otimes T_{x(\tilde{a})}C_{z}^{\prime}\otimes1_{b}^{-}=\frac{1}%
{\sum_{u\in _{\tilde{a}}\!W}v^{-2\ell(u)}}1_{a}^{+}\otimes T_{x(\tilde{a})}%
(\sum_{u\in _{\tilde{a}}\!W}v^{-\ell(u)}T_{u})C_{z}^{\prime}\otimes1_{b}^{-}.
\]
As the left multiplication by $\sum_{u\in _{\tilde{a}}\!W}v^{-\ell(u)}T_{u}$
gives the subspace of dimension $|S_{r}|/|_{\tilde{a}}\!W|$ in the Hecke algebra
$H(S_{r})$, it has the basis $\{C_{w_{\tilde{a}}y}^{\prime}\mid y\in
_{\tilde{a}}\!\!W\backslash S_{r}\}$. By the positivity of the structure
constants, we may write
\[
(\sum_{u\in _{\tilde{a}}\!W}v^{-\ell(u)}T_{u})C_{z}^{\prime}=\sum_{y\in
_{\tilde{a}}\!W\backslash S_{r}}B_{y,z}(v)C_{w_{\tilde{a}}y}^{\prime}%
\]
where $B_{y,z}(v)\in\mathbb{N}[v,v^{-1}]$. Thus,
\[
1_{a}^{+}\otimes T_{x(\tilde{a})}C_{z}^{\prime}\otimes1_{b}^{-}=\sum
_{\gamma\in\tilde{a}S_{r}}B_{\tilde{x}(\gamma),z}(v)\frac{1}{\sum_{u\in
_{\tilde{a}}\!W}v^{-2\ell(u)}}1_{a}^{+}\otimes T_{x(\tilde{a})}C_{w_{\tilde{a}%
}\tilde{x}(\gamma)}^{\prime}\otimes1_{b}^{-}.
\]
For each $\gamma\in\tilde{a}S_{r}$, define
\begin{equation}
d_{\gamma,\xi}^{\prime}(v)=v^{-\ell(w_{a})}\sum_{z\in S_{r}}v^{\ell
(w_{\tilde{a}})}A_{x(\tilde{a})z,w_{a}x(\xi)w_{b}}(v)B_{\tilde{x}(\gamma
),z}(v). \label{def_d'}%
\end{equation}
Then, $d_{\gamma,\xi}^{\prime}(v)\in\mathbb{N}[v,v^{-1}]$ and we have 
\begin{multline*}
\sum_{z\in S_{r}}v^{-\ell(w_{a})}A_{x(\tilde{a})z,w_{a}x(\xi)w_{b}}%
(v)1_{a}^{+}\otimes T_{x(\tilde{a})}C_{z}^{\prime}\otimes1_{b}^{-}\\%
\begin{split}
&  =\sum_{\gamma\in\tilde{a}S_{r}}d_{\gamma,\xi}^{\prime}(v)\frac
{v^{-\ell(w_{\tilde{a}})}}{\sum_{u\in _{\tilde{a}}\!W}v^{-2\ell(u)}}1_{a}%
^{+}\otimes T_{x(\tilde{a})}C_{w_{\tilde{a}}\tilde{x}(\gamma)}^{\prime}%
\otimes1_{b}^{-}\\
&  =\sum_{\gamma\in\tilde{a}S_{r}\cap P_{b}^{+}}\sum_{t\in W_{b}}d_{\gamma
w_{b}t,\xi}^{\prime}(v)\frac{v^{-\ell(w_{\tilde{a}})}}{\sum_{u\in _{\tilde
{a}}\!W}v^{-2\ell(u)}}1_{a}^{+}\otimes T_{x(\tilde{a})}C_{w_{\tilde{a}}\tilde
{x}(\gamma)w_{b}t}^{\prime}\otimes1_{b}^{-}%
\end{split}
\end{multline*}
where we slightly abuse the notation by using the same index $\gamma$ in the
last two expressions. If $xs_{i}<x$, for some $s_{i}\in W_{b}$, then
\[
v^{-1}C_{x}^{\prime}\otimes1_{b}^{-}=C_{x}^{\prime}T_{i}\otimes1_{b}%
^{-}=-vC_{x}^{\prime}\otimes1_{b}^{-}%
\]
and $C_{x}^{\prime}\otimes1_{b}^{-}=0$. Thus, we have in fact%

\begin{multline*}
\sum_{z\in S_{r}}v^{-\ell(w_{a})}A_{x(\tilde{a})z,w_{a}x(\xi)w_{b}}%
(v)1_{a}^{+}\otimes T_{x(\tilde{a})}C_{z}^{\prime}\otimes1_{b}^{-}=\\
\sum_{\gamma\in\tilde{a}S_{r}\cap P_{b}^{+}}d_{\gamma w_{b},\xi}^{\prime
}(v)\frac{v^{-\ell(w_{\tilde{a}})}}{\sum_{u\in _{\tilde{a}}\!W}v^{-2\ell(u)}%
}1_{a}^{+}\otimes T_{x(\tilde{a})}C_{w_{\tilde{a}}\tilde{x}(\gamma)w_{b}%
}^{\prime}\otimes1_{b}^{-}.
\end{multline*}
By using the last expression in (\ref{express_C+}) , we derive
\[
C_{e}^{+}({\boldsymbol{\lambda}})=\sum_{\tilde{a}\in aW\cap P^{-}}\sum
_{\gamma\in\tilde{a}S_{r}\cap P_{b}^{++}}d_{\gamma w_{b},\xi}^{\prime}%
(v)\frac{v^{-\ell(w_{\tilde{a}})}}{\sum_{u\in _{\tilde{a}}\!W}v^{-2\ell(u)}%
}1_{a}^{+}\otimes T_{x(\tilde{a})}C_{w_{\tilde{a}}\tilde{x}(\gamma)w_{b}%
}^{\prime}\otimes1_{b}^{-}.
\]
By using (\ref{BcInfinite}), this can also be rewritten
\[
C_{e}^{+}({\boldsymbol{\lambda}})=\sum_{\tilde{a}\in aW\cap P^{-}}\sum
_{\gamma=\zeta(\boldsymbol{\nu})\in\tilde{a}S_{r}\cap P_{b}^{++}}d_{\gamma w_{b},\xi}^{\prime}%
(v)\psi(C_{\infty}^{+}(\boldsymbol{\nu}))
\]
Hence, comparing it
with (\ref{refomulation}), we obtain $d_{\gamma\xi}(v)=d_{\gamma w_{b},\xi
}^{\prime}(v)\in\mathbb{N}%
[v,v^{-1}].$ We have established the desired positivity result:

\begin{theorem}
The polynomials $d_{{\boldsymbol{\lambda},\boldsymbol{\nu}}}(v)$ which appear
in (\ref{main_dec2}) have nonnegative integer coefficients.
\end{theorem}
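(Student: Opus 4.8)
The plan is to assemble the computations already carried out in this section. The crucial observation is that the plus canonical basis element $C_e^+(\boldsymbol{\lambda})$ has been expanded in two different ways. On one hand, combining Theorem \ref{Th_main} with the Kazhdan--Lusztig expression of Theorem \ref{Th_Uglov_dec} gives the expansion (\ref{refomulation}), in which $C_e^+(\boldsymbol{\lambda})$ appears as a $\mathbb{Z}[v]$-combination of the elements $\psi(C_\infty^+(\boldsymbol{\nu}))$ with coefficients $d_{\gamma\xi}(v)=d_{\boldsymbol{\lambda},\boldsymbol{\nu}}(v)$. On the other hand, starting from the defining formula (\ref{express_C+}), rewriting $C'_w$ on the $U_y$-basis, factoring out $\sum_{u\in{}_a\!W}v^{-\ell(u)}T_u$, and passing to the $J$-dominant representatives, one obtains an expansion of the same element as $\sum_{\gamma}d'_{\gamma w_b,\xi}(v)\,\psi(C_\infty^+(\boldsymbol{\nu}))$ with coefficients $d'_{\gamma w_b,\xi}(v)$ built, via (\ref{def_d'}), out of the constants $A_{y,w}(v)$ and $B_{y,z}(v)$.

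The first thing to do is to check that the family $\{\psi(C_\infty^+(\boldsymbol{\nu}))\}$ occurring here is linearly independent: the $C_\infty^+(\boldsymbol{\nu})$ satisfy the defining properties of Uglov's plus canonical basis and $\psi$ is injective, so the two expansions of $C_e^+(\boldsymbol{\lambda})$ must agree coefficient by coefficient. This yields $d_{\boldsymbol{\lambda},\boldsymbol{\nu}}(v)=d'_{\gamma w_b,\xi}(v)$. One then combines this with Theorem \ref{Th_main}: since $d_{\boldsymbol{\lambda},\boldsymbol{\lambda}}(v)=1$ and $d_{\boldsymbol{\lambda},\boldsymbol{\nu}}(v)\in v\mathbb{Z}[v]$ for $\boldsymbol{\lambda}\neq\boldsymbol{\nu}$, the Laurent polynomial $d_{\boldsymbol{\lambda},\boldsymbol{\nu}}(v)$ in fact lies in $\mathbb{Z}[v]$; since $\mathbb{N}[v,v^{-1}]\cap\mathbb{Z}[v]=\mathbb{N}[v]$, it suffices to know $d'_{\gamma w_b,\xi}(v)\in\mathbb{N}[v,v^{-1}]$ to conclude $d_{\boldsymbol{\lambda},\boldsymbol{\nu}}(v)\in\mathbb{N}[v]$.

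So the substantive work is the positivity of the intermediate coefficients, and this is where the main obstacle lies: one must ensure that nonnegativity survives each of the four manipulations used above. Namely (i) the expansion $C'_w=\sum_y A_{y,w}(v)U_y$ has $A_{y,w}(v)\in\mathbb{N}[v,v^{-1}]$ by \cite[Cor. 3.9]{GH}, the Grojnowski--Haiman generalization of the positivity of Kazhdan--Lusztig structure constants; (ii) the factorization (\ref{C'_factor}) rests on the identity $A_{ux(\gamma),w}(v)=v^{-\ell(u)}A_{x(\gamma),w}(v)$, which is a positivity-preserving rescaling; (iii) the re-expansion $(\sum_{u\in{}_{\tilde a}\!W}v^{-\ell(u)}T_u)C'_z=\sum_y B_{y,z}(v)C'_{w_{\tilde a}y}$ on the relevant summand of $H(S_r)$ again has $B_{y,z}(v)\in\mathbb{N}[v,v^{-1}]$ by positivity of the structure constants; and (iv) the passage to $J$-dominant $\gamma$ uses only that $C'_x\otimes 1_b^-=0$ whenever $xs_i<x$ for some $s_i\in W_b$, which discards terms rather than introducing signs. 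Granting all of this, the definition (\ref{def_d'}) exhibits $d'_{\gamma,\xi}(v)$ as a finite sum of products of elements of $\mathbb{N}[v,v^{-1}]$, hence itself in $\mathbb{N}[v,v^{-1}]$, and comparing with (\ref{refomulation}) completes the argument. The delicate point throughout is the bookkeeping of lengths and cosets ensuring that no step secretly requires a subtraction.
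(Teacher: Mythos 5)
Your proposal is correct and follows essentially the same route as the paper: expand $C_e^+(\boldsymbol{\lambda})$ once via (\ref{refomulation}) and once via the $U_y$-basis, factorization (\ref{C'_factor}), and re-expansion with the $B_{y,z}(v)$, then compare coefficients, invoking Grojnowski--Haiman positivity for the $A_{y,w}(v)$ and KL structure-constant positivity for the $B_{y,z}(v)$, and finally intersect with $\mathbb{Z}[v]$ using Theorem \ref{Th_main}. The only point you make explicit that the paper leaves implicit is the linear independence of the $\psi(C_\infty^+(\boldsymbol{\nu}))$, which is indeed the reason the two expansions can be compared term by term.
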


\subsection{The case $v=1$}

The proof of the positivity we have obtained does not properly yield a
geometric interpretation of the coefficients $d_{\gamma\xi}(v).$ The purpose
of this section is to show that their specializations $d_{\gamma\xi}(1)$ may
be interpreted as composition multiplicities. Let us rewrite the right action in a
more coordinate free manner. For this, we consider
\[
\mathfrak{g}^{\prime}=[\mathfrak{g},\mathfrak{g}]=\mathfrak{sl}_{r}%
(\mathbb{C})\otimes\mathbb{C}[t,t^{-1}]\oplus\mathbb{C}c,
\]
where $\mathfrak{g}=\mathfrak{sl}_{r}(\mathbb{C})\otimes\mathbb{C}%
[t,t^{-1}]\oplus\mathbb{C}c\oplus\mathbb{C}d$ is the Kac-Moody Lie algebra of
type $A_{r-1}^{(1)}.$ Then the fundamental weights $\Lambda_{0},\dots
,\Lambda_{r-1}$ remain linearly independent on
\[
\mathfrak{h}^{\prime}=\bigoplus_{i=0}^{r-1}\mathbb{C}\alpha_{i}^{\vee}%
\]
and we may write its dual space as follows.
\[
{\mathfrak{h}^{\prime}}^{\ast}=\mathfrak{h}^{\ast}/\mathbb{C}\delta
=\bigoplus_{i=0}^{r-1}\mathbb{C}\Lambda_{i}.
\]
We identify the weight lattice $P$ of $\mathfrak{sl}_{r}(\mathbb{C})$ with the
set of level zero integral weights in ${\mathfrak{h}^{\prime}}^{\ast}$ by
\[
P=\frac{\bigoplus_{i=1}^{r}\mathbb{Z}\epsilon_{i}}{\mathbb{Z}(\epsilon
_{1}+\cdots+\epsilon_{r})}=\bigoplus_{i=1}^{r-1}\mathbb{Z}(\Lambda_{i}%
-\Lambda_{0})\subseteq{\mathfrak{h}^{\prime}}^{\ast}%
\]
where $\xi=\sum_{i=1}^{r}\xi_{i}\epsilon_{i}\mapsto\sum_{i=1}^{r-1}(\xi
_{i}-\xi_{i+1})(\Lambda_{i}-\Lambda_{0})$. \footnote{We drop ``modulo
$\mathbb{Z}(\epsilon_{1}+\cdots+\epsilon_{r})$ ''by abuse of notation.} For
$\xi\in P$, we define
\[
\hat{\xi}=-\xi+e\Lambda_{0}\in{\mathfrak{h}^{\prime}}^{\ast}.
\]
The Weyl group action on ${\mathfrak{h}^{\prime}}^{\ast}$ preserves
$P+e\Lambda_{0}$. Moreover, if we define $w\xi$ for $w\in W$ and $\xi\in P$ by
$w\hat{\xi}=-w\xi+e\Lambda_{0}$ where $\hat{\xi}\mapsto w\hat{\xi}$ is the Weyl group
action on ${\mathfrak{h}^{\prime}}^{\ast}$, then%
\[
s_{i}\xi=\xi_{i+1}\epsilon_{i}+\xi_{i}\epsilon_{i+1}+\sum_{j\neq i,i+1}\xi
_{j}\epsilon_{j},
\]
for $1\leq i\leq r-1$, and
\[
s_{0}\xi=(\xi_{r}-e)\epsilon_{1}+(\xi_{1}+e)\epsilon_{r}+\sum_{j\neq1,r}%
\xi_{j}\epsilon_{j}.
\]
Thus, $\xi\cdot w:=w^{-1}\xi$, for $\xi\in P$ and $w\in W$, is nothing but the right
action of $W$.

Let $J\subset\{1,\ldots,r-1\}$ and $\mu$ the composition of $r$ defined by
$J.$ Write $\mathfrak{p}_{\mu}(\mathbb{C})$ for the parabolic subalgebra of
$\mathfrak{g}$ defined by $\mu$ and $\mathfrak{l}_{\mu}(\mathbb{C})$ for the
standard Levi subalgebra of $\mathfrak{p}_{\mu}(\mathbb{C})$. For $\eta\in
P_{b}^{++}$, we denote by $V(w_{b}\hat{\eta})$ the finite dimensional
irreducible $\mathfrak{l}_{\mu}(\mathbb{C})\oplus\mathbb{C}c$-module with
highest weight $w_{b}\hat{\eta}-\rho$, where $\rho$ is such that $\langle
\rho,\alpha_{i}^{\vee}\rangle=1$, for $0\leq i\leq r-1$. Thus, the canonical
central element $c$ acts as the scalar $e-r$. We view $V(w_{b}\hat{\eta})$ as
a $\mathfrak p_{\mu}(\mathbb{C})\oplus\mathbb{C}c$-module. Then, through the
evaluation homomorphism
\[
\mathfrak p_{\mu}=\{X\in\mathfrak{sl}_{r}(\mathbb{C}[t])\mid X|_{t=0}%
\in\mathfrak p_{\mu}(\mathbb{C})\}\oplus\mathbb{C}c\rightarrow\mathfrak
p_{\mu}(\mathbb{C})\oplus\mathbb{C}c
\]
we may view it as a $\mathfrak p_{\mu}$-module as well. We define the
following $\mathfrak g^{\prime}$-module.
\[
M_{\mu}(w_{b}\hat{\eta})=U(\mathfrak g^{\prime})\otimes_{U(\mathfrak p_{\mu
})}V(w_{b}\hat{\eta}).
\]
If $X\in\mathfrak p_{\mu}$, then
\[
Xu\otimes v=[X,u]\otimes v+u\otimes Xv\quad(u\in U(\mathfrak g^{\prime}),v\in
V(w_{b}\hat{\eta})).
\]
Hence $M_{\mu}(w_{b}\hat{\eta})$ is isomorphic to the tensor product
representation of the adjoint representation on $U(\mathfrak g^{\prime})$ and
$V(w_{b}\hat{\eta})$ as a $\mathfrak p_{\mu}$-module. Thus $M_{\mu}(w_{b}%
\hat{\eta})$ is an integrable $\mathfrak p_{\mu}$-module.

For any $\zeta$ in ${\mathfrak{h}^{\prime}}^{\ast}$, we denote by $M(\zeta)$ the
Verma $\mathfrak g^{\prime}$-module with highest weight $\zeta-\rho$. Then, by
the Weyl character formula, we have for $\eta\in P_{b}^{++}$%
\[
M_{\mu}(w_{b}\hat{\eta})=\sum_{u\in W_{b}}(-1)^{\ell(u)}M(uw_{b}\hat{\eta
}).
\]

We consider the following maximal parabolic subalgebra of $\mathfrak
g^{\prime}$.
\[
\mathfrak g_{0}^{\prime}=\mathfrak{sl}_{r}(\mathbb{C}[t])\oplus\mathbb{C}%
c\subseteq\mathfrak g^{\prime}.
\]
We define
\[
M_{0}(w_{b}\hat{\eta})=U(\mathfrak g^{\prime})\otimes_{U(\mathfrak
g_{0}^{\prime})}L(w_{b}\hat{\eta})
\]
where $L(w_{b}\hat{\eta})$ is the irreducible highest weight $\mathfrak
g_{0}^{\prime}$-module whose highest weight is $w_{b}\hat{\eta}-\rho$.

Now, with the notation of \S\ \ref{sub-sec_stab}, observe that $\langle
\tilde{a},\alpha_{i}^{\vee}\rangle\leq0$, for $1\leq i\leq r-1$. Moreover, we
have
\[
-uw_{b}\eta=-uw_{b}\tilde{x}(\eta)^{-1}\tilde{a}
\]
such that $w_{\tilde{a}}\tilde{x}(\eta)w_{b}u^{-1}$ is the maximal length
coset representative of $W_{\tilde{a}}\tilde{x}(\eta)w_{b}u^{-1}$. Now we
apply the classical Kazhdan-Lusztig conjecture for semisimple Lie algebras,
which is the theorem by Beilinson-Bernstein and Brylinski-Kashiwara. Here, 
the Lie algebra is $\mathfrak{sl}_{r}(\mathbb{C})$ and it gives%
\[
M(uw_{b}\hat{\eta})=\sum_{\gamma\in\tilde{a}S_{r}}P_{w_{\tilde{a}}\tilde
{x}(\eta)w_{b}u^{-1},w_{\tilde{a}}\tilde{x}(\gamma)w_{b}}(1)M_{0}(w_{b}%
\hat{\gamma}),
\]
for $u\in W_{b}$. This implies that
\begin{align*}
M_{\mu}(w_{b}\hat{\eta})  &  =\sum_{u\in W_{b}}(-1)^{\ell(u)}M(uw_{b}%
\hat{\eta})\\
&  =\sum_{\gamma\in\tilde{a}S_{r}}P_{w_{\tilde{a}}\tilde{x}(\eta
)w_{b},w_{\tilde{a}}\tilde{x}(\gamma)w_{b}}^{J,-1}(1)M_{0}(w_{b}\hat{\gamma}).
\end{align*}
By the integrality as a $\mathfrak p_{\mu}$-module, we have
\[
M_{\mu}(w_{b}\hat{\eta})=\sum_{\gamma\in\tilde{a}S_{r}\cap P_{b}^{++}%
}P_{w_{\tilde{a}}\tilde{x}(\eta)w_{b},w_{\tilde{a}}\tilde{x}(\gamma)w_{b}%
}^{J,-1}(1)M_{0}(w_{b}\hat{\gamma}).
\]
Note also that $\hat{a}=-\sum_{i=1}^{r-1}(a_{i}-a_{i+1})(\Lambda_{i}%
-\Lambda_{0})+e\Lambda_{0}$ satisfies
\[
\langle\hat{a},\alpha_{i}^{\vee}\rangle=\left\{
\begin{array}
[c]{lll}%
a_{i+1}-a_{i}\geq0\quad & (1\leq i\leq r-1) & \\
e+a_{1}-a_{r}\geq1>0\quad & (i=0) &
\end{array}
\right.
\]
and we have
\[
uw_{b}\hat{\eta}=uw_{b}x(\eta)^{-1}\hat{a}
\]
such that $w_{a}x(\eta)w_{b}u^{-1}$ is the maximal length coset
representative of $_{a}\!Wx(\eta)w_{b}u^{-1}$, for $u\in W_{b}$. Thus, by the
Kazhdan-Lusztig conjecture again, this time for $\mathfrak g$,
\[
M(uw_{b}\hat{\eta})=\sum_{\xi\in aW}P_{w_{a}x(\eta)w_{b}u^{-1},w_{a}%
x(\xi)w_{b}}(1)L(w_{b}\hat{\xi}),
\]
for $u\in W_{b}$. This implies that
\begin{align*}
M_{\mu}(w_{b}\hat{\eta})  &  =\sum_{u\in W_{b}}(-1)^{\ell(u)}M(uw_{b}%
\hat{\eta})\\
&  =\sum_{\xi\in aW}P_{w_{a}x(\eta)w_{b},w_{a}x(\xi)w_{b}}^{J,-1}%
(1)L(w_{b}\hat{\xi}).
\end{align*}
By the integrality as a $\mathfrak p_{\mu}$-module again, we obtain
\[
M_{\mu}(w_{b}\hat{\eta})=\sum_{\xi\in aW\cap P_{b}^{++}}P_{w_{a}x(\eta
)w_{b},w_{a}x(\xi)w_{b}}^{J,-1}(1)L(w_{b}\hat{\xi}).
\]
Therefore, if we write
\[
M_{0}(w_{b}\hat{\gamma})=\sum_{\xi\in aW\cap P_{b}^{++}}d_{\gamma\xi}%
L(w_{b}\hat{\xi}),
\]
for $d_{\gamma\xi}\in\mathbb{N}$, in other words $[M_{0}(w_{b}\hat{\gamma
}):L(w_{b}\hat{\xi})]=d_{\gamma\xi}$, we have
\[
P_{w_{a}x(\eta)w_{b},w_{a}x(\xi)w_{b}}^{J,-1}(1)=\sum_{\gamma\in\tilde
{a}S_{r}\cap P_{b}^{++}}P_{w_{\tilde{a}}\tilde{x}(\eta)w_{b},w_{\tilde{a}%
}\tilde{x}(\gamma)w_{b}}^{J,-1}(1)d_{\gamma\xi}.
\]

Hence, we have the following interpretation of $d_{\boldsymbol{\lambda},\boldsymbol{\nu}}(1)$. 

\begin{proposition}
For the relative decomposition numbers evaluated at $v=1$, we have the equalities
\[
d_{\boldsymbol{\lambda},\boldsymbol{\nu}}(1)=[M_{0}(w_{b}
\hat{\gamma}):L(w_{b}\hat{\xi})]
\]
where $\xi=\zeta(\boldsymbol{\lambda})$ and $\gamma=\zeta(\boldsymbol{\nu})$.
\end{proposition}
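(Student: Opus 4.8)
The plan is to recognize that the two families of nonnegative integers $(d_{\boldsymbol{\lambda},\boldsymbol{\nu}}(1))_{\boldsymbol{\nu}}$ and $([M_{0}(w_{b}\hat{\gamma}):L(w_{b}\hat{\xi})])_{\boldsymbol{\nu}}$, with $\xi=\zeta(\boldsymbol{\lambda})$ and $\gamma=\zeta(\boldsymbol{\nu})$, are solutions of one and the same invertible linear system, hence coincide.

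First I would specialize the factorization $\Delta_{e}(v)=\Delta_{\infty}(v)\Delta_{\infty}^{e}(v)$ of Theorem~\ref{Th_main} at $v=1$, restricted to the block attached to a fixed pair $(\boldsymbol{\lambda},\boldsymbol{\mu})$. Fixing $r$ large enough for both the finite $e$ and the stabilized regime of \S\ref{sub-sec_stab} (cf. Remark~\ref{rq_inde}), and writing $\eta=\zeta(\boldsymbol{\mu})$, $a,b$ and the stabilized data $\tilde a,\tilde x$ as there, I would invoke Theorem~\ref{Th_Uglov_dec}(2) for $e$ finite together with its stabilized analogue for $e=\infty$. At $v=1$ the $v$-power prefactors disappear, so the factorization becomes
\[
P_{w_{a}x(\eta)w_{b},\,w_{a}x(\xi)w_{b}}^{J,-1}(1)=\sum_{\boldsymbol{\nu}}P_{w_{\tilde a}\tilde x(\eta)w_{b},\,w_{\tilde a}\tilde x(\zeta(\boldsymbol{\nu}))w_{b}}^{J,-1}(1)\,d_{\boldsymbol{\lambda},\boldsymbol{\nu}}(1),
\]
the sum running over $l$-partitions $\boldsymbol{\nu}$ with $\zeta(\boldsymbol{\nu})\in\tilde a S_{r}\cap P_{b}^{++}$.

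Next I would observe that the module-theoretic computation carried out just above already yields exactly the same system: combining the two expansions of $M_{\mu}(w_{b}\hat{\eta})$ in the Grothendieck group and reading off the coefficient of $L(w_{b}\hat{\xi})$ gives
\[
P_{w_{a}x(\eta)w_{b},\,w_{a}x(\xi)w_{b}}^{J,-1}(1)=\sum_{\gamma\in\tilde a S_{r}\cap P_{b}^{++}}P_{w_{\tilde a}\tilde x(\eta)w_{b},\,w_{\tilde a}\tilde x(\gamma)w_{b}}^{J,-1}(1)\,[M_{0}(w_{b}\hat{\gamma}):L(w_{b}\hat{\xi})].
\]
Thus both sequences solve the linear system with coefficient matrix $\bigl(P_{w_{\tilde a}\tilde x(\eta)w_{b},\,w_{\tilde a}\tilde x(\gamma)w_{b}}^{J,-1}(1)\bigr)_{\eta,\gamma}$ and right-hand side $\bigl(P_{w_{a}x(\eta)w_{b},\,w_{a}x(\xi)w_{b}}^{J,-1}(1)\bigr)_{\eta}$, with $\eta,\gamma,\xi$ running over the same finite set $\tilde a S_{r}\cap P_{b}^{++}$.

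Finally I would conclude by uniqueness: up to a diagonal rescaling by $v$-powers that are trivial at $v=1$, the coefficient matrix is the $v=1$ specialization of $\Delta_{\infty}(v)$ on this block, which by Theorem~\ref{TH_Uglov}(2) is unitriangular with respect to the $e$-independent order $\succ$ (transported to the $\gamma$'s through $\boldsymbol{\nu}\mapsto\zeta(\boldsymbol{\nu})$); hence it is square, unipotent upper triangular, and invertible over $\mathbb{Z}$, so the system has a unique solution and $d_{\boldsymbol{\lambda},\boldsymbol{\nu}}(1)=[M_{0}(w_{b}\hat{\gamma}):L(w_{b}\hat{\xi})]$. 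The hard part will be the bookkeeping in the first step: checking that a single $r$ makes the stabilized data $(\tilde a,\tilde x)$ and the finite-$e$ data $(a,x)$ compatible, that the index ranges of $\eta,\gamma,\xi$ in the two identities genuinely agree, and that dominance in the weight lattice matches $\succ$ under $\boldsymbol{\nu}\mapsto\zeta(\boldsymbol{\nu})$ so that the coefficient matrix is genuinely triangular; once these identifications are in place the uniqueness argument is routine.
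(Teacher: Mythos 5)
Your proposal is essentially the paper's argument. The paper derives, via the two BGG expansions of $M_{\mu}(w_{b}\hat{\eta})$, the relation
\[
P_{w_{a}x(\eta)w_{b},\,w_{a}x(\xi)w_{b}}^{J,-1}(1)=\sum_{\gamma\in\tilde{a}S_{r}\cap P_{b}^{++}}P_{w_{\tilde{a}}\tilde{x}(\eta)w_{b},\,w_{\tilde{a}}\tilde{x}(\gamma)w_{b}}^{J,-1}(1)\,[M_{0}(w_{b}\hat{\gamma}):L(w_{b}\hat{\xi})],
\]
and compares it with the $v=1$ specialization of the identity defining $d_{\gamma\xi}(v)$ obtained in \S\ref{sub-sec_stab}; the word ``Hence'' in the paper encodes the invertibility/uniqueness argument that you make explicit via the unitriangularity of $\Delta_{\infty}(1)$ (Theorem~\ref{TH_Uglov}(2)). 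Your version is a clean write-up of the same reasoning; the only addition is your last paragraph spelling out that the coefficient matrix is, up to a $v$-power rescaling trivial at $v=1$, the transposed block of $\Delta_{\infty}(1)$ and hence unipotent triangular for the $e$-independent order $\succ$, which is precisely the justification the paper leaves implicit. No gap, same route.
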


It would be desirable to understand
$d_{\boldsymbol{\lambda},\boldsymbol{\nu}}(v)$ in terms of Jantzen filtration. In the case when $W_{b}$ is
trivial, we expect that the Verma module is rigid and Jantzen conjecture holds.\newline 

%
%
%
%
%

\noindent\textbf{Acknowledgements.} The first author is partly supported by
the Grant-in-Aid for Scientific Research (B) (No. 20340004), Japan Society for
the Promotion of Science. The second author is supported by "Agence nationale
de la recherche" ANR JC-07-1923-39. The third author is supported by "Agence
Nationale de la Recherche "ANR-09-JCJC-0102-01.

\end{document}